\numberwithin{equation}{section}
\newtheorem{theorem}[equation]{Theorem}
\newtheorem*{thm}{Theorem}
\newtheorem*{mainthm}{Main Theorem}
\newtheorem{proposition}[equation]{Proposition}
\newtheorem{lemma}[equation]{Lemma}
\newtheorem{corollary}[equation]{Corollary}
\newtheorem*{coro}{Corollary}
\newtheorem*{questn}{Question}
\newtheorem{quest}[equation]{Question}
\theoremstyle{definition}
\newtheorem{rmk}[equation]{Remark}
\newenvironment{remark}[1][]{\begin{rmk}[#1] \pushQED{\qed}}{\popQED \end{rmk}}
\newtheorem{rmks}[equation]{Remarks}
\newtheorem{eg}[equation]{Example}
\newenvironment{example}[1][]{\begin{eg}[#1] \pushQED{\qed}}{\popQED \end{eg}}
\newtheorem{defn}[equation]{Definition}
\newenvironment{definition}[1][]{\begin{defn}[#1]\pushQED{\qed}}{\popQED \end{defn}}
\newtheorem{altdefn}[equation]{Alternative Definition}
\newenvironment{altdefinition}[1][]{\begin{altdefn}[#1]\pushQED{\qed}}{\popQED \end{altdefn}}
\newtheorem{ques}[equation]{Question}
\newenvironment{question}[1][]{\begin{ques}[#1]\pushQED{\qed}}{\popQED \end{ques}}
\newtheorem{notn}[equation]{Notation}
\renewcommand{\phi}{\varphi}
\renewcommand{\emptyset}{\varnothing}
\renewcommand{\tilde}[1]{\widetilde{#1}}
\newcommand{\hgt}{\mbox{ht}}
\def\Ddots{\mathinner{\mkern1mu\raise\p@
\vbox{\kern7\p@\hbox{.}}\mkern2mu
\raise4\p@\hbox{.}\mkern2mu\raise7\p@\hbox{.}\mkern1mu}}
\DeclareMathOperator{\ass}{Ass}
\title{Geometric vertex decomposition and liaison}
\author{Patricia Klein}
\author{Jenna Rajchgot}
\address{University of Minnesota, School of Mathematics, Minneapolis, MN, USA.}
\email[Patricia Klein]{klein847@umn.edu}
\address{McMaster University, Department of Mathematics and Statistics, Hamilton, ON, Canada}
\email[Jenna Rajchgot]{rajchgot@math.mcmaster.ca}
\begin{document}

\maketitle

\begin{abstract}
 Geometric vertex decomposition and liaison are two frameworks that have been used to produce similar results about similar families of algebraic varieties. 
 In this paper, we establish an explicit connection between these   approaches.
 In particular, we show that each geometrically vertex decomposable ideal is linked by a sequence of elementary $G$-biliaisons of height 1 to an ideal of indeterminates and, conversely, that every $G$-biliaison of a certain type gives rise to a geometric vertex decomposition. As a consequence, we can immediately conclude that several well-known  families of ideals are  glicci, including Schubert determinantal ideals, defining ideals of varieties of complexes, and defining ideals of graded lower bound cluster algebras. 
\end{abstract}

\section{Introduction}

Determinantal ideals and their generalizations have been explored extensively both in the context of commutative algebra and also in the study of Schubert varieties in flag varieties. This overlap is to be expected because, for example, 
\begin{itemize}
\item each ideal generated by the $k\times k$ minors of a generic matrix is the defining ideal of an open patch of a Schubert variety in a Grassmannian;
\item each one-sided ladder determinantal ideal is a Schubert determinantal ideal for a vexillary (i.e., 2143-avoiding) permutation (see eg. \cite{KMY09});
\item each two sided mixed ladder determinantal ideal is a type $A$ Kazhdan-Lusztig ideal (see eg. \cite{EFRW});
\item each ideal generated by the $k\times k$ minors of a generic symmetric matrix is the  defining ideal of an open patch of a Schubert variety in a Lagrangian Grassmannian;
\item each defining ideal of a variety of complexes is a type $A$ Kazhdan-Lusztig ideal, up to some extra indeterminate generators (see eg. \cite[Ch. 17]{MillerSturmfels}).
\end{itemize}

While similar results on the above-mentioned families of ideals appear in the Schubert variety and commutative algebra literatures, it is often different techniques that are used to obtain them. 

For example, in \cite{KMY09}, A. Knutson, E. Miller, and A. Yong introduced \emph{geometric vertex decomposition}, a degeneration technique, and used this to study Gr\"obner geometry of Schubert determinantal ideals for vexillary permutations. 
See Section \ref{sect:GVD} for background on geometric vertex decomposition. 
Independently, \emph{liaison-theoretic} methods were used by E. Gorla in \cite{Gor07} and E. Gorla, J. Migliore, and U. Nagel in \cite{GMN13} to obtain Gr\"obner bases for  various classes of ladder determinantal ideals (including one sided ladder determinantal ideals,
also known as Schubert determinantal ideals for vexillary permutations). 
Roughly speaking, liaison is a theory that aims to transfer information from one subscheme of projective space to another in cases when their union is sufficiently nice. See Section \ref{sect:liaison} for background on liaison. 

In this paper, we establish an explicit connection between  \emph{geometric vertex decomposition} and \emph{liaison}, and we study implications of this connection. We have three main goals, which we now outline.

\subsubsection*{First goal} The first goal of this paper is to show that it is no coincidence that geometric vertex decomposition and liaison can be used to obtain similar results for similar classes of ideals. Indeed, we prove the following explicit connection between the two techniques:

\begin{mainthm}\label{thm:main1}
Under mild hypotheses, every geometric vertex decomposition gives rise to an elementary $G$-biliason of height $1$.  Every sufficiently ``nice" elementary $G$-biliaison of height $1$ gives rise to a geometric vertex decomposition.
\end{mainthm}

The first half of this theorem is stated precisely and proved as Corollary \ref{cor:gvdToLia}.  The second half is stated precisely and proved as Theorem \ref{thm:linkToGVD}.  

\subsubsection*{Second goal.} The second motivation for our work comes from a long-standing open question in liaison theory, which asks whether subschemes of $\mathbb{P}^n$ are arithmetically Cohen--Macaulay if and only if they are in the \emph{Gorenstein liaison class of a complete intersection}  (often referred to as \emph{glicci}, shorthand introduced in \cite{KM+01}).  It is a standard homological argument that every glicci subscheme of $\mathbb{P}^n$ is arithmetically Cohen--Macaulay.  Hence, the question may be phrased as follows:

\begin{questn}\label{motivation2}\cite[Question 1.6]{KM+01}
Is every arithmetically Cohen--Macaulay subscheme of $\mathbb{P}^n$ glicci?
\end{questn}

\noindent For more background on why this question emerges naturally from the history of liaison and for a summary of partial results already in the literature, see Section \ref{sect:liaison}.

By combining our main theorem with some straightforward consequences of geometric vertex decomposition, we arrive at the following, which is stated precisely as Corollary \ref{cor:AutomaticallyGlicci}:

\begin{coro}\label{cor:mainCor}
Let $I$ be a homogenous ideal in a polynomial ring. 
If the Lex-initial ideal of $I$ is the Stanley--Reisner ideal of a vertex decomposable simplicial complex and the vertex decomposition is compatible with the order of the variables, then $I$ is glicci. 
\end{coro}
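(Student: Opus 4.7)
The plan is to run an induction on the number of variables in the polynomial ring, using the hypothesis on the Lex-initial ideal to produce, at each stage, a genuine geometric vertex decomposition of $I$ itself (not merely of $\init(I)$), and then to invoke Corollary \ref{cor:gvdToLia} to trade each such decomposition for an elementary $G$-biliaison of height $1$. The base case is when no variables remain, in which case $I$ is either the unit ideal or $(0)$ and there is nothing to prove.

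For the inductive step, let $y$ be the largest variable in the chosen Lex order, which by hypothesis is also the first vertex used in the vertex decomposition of the Stanley--Reisner complex $\Delta$ of $\init(I)$. Vertex decomposability of $\Delta$ means that the link and deletion of $y$ in $\Delta$ are again vertex decomposable, with decompositions compatible with the remaining variable order. Translating back to ideals, the Stanley--Reisner ideals of these two subcomplexes are precisely the two ideals $C_{y,I}$ and $N_{y,I}$ arising from a geometric vertex decomposition of $\init(I)$ at $y$. The first real task is to lift this combinatorial decomposition to a geometric vertex decomposition of $I$ itself at $y$; here one appeals to the equivalence (developed earlier in Section \ref{sect:GVD} and used pervasively in the paper) between $y$ being the initial variable of a geometric vertex decomposition of $I$ and the corresponding behavior on $\init(I)$, so that the ``compatibility with the order of the variables'' assumption is exactly what is needed.

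Once $I$ geometrically vertex decomposes as $C_{y,I}$ and $N_{y,I}$, one applies Corollary \ref{cor:gvdToLia} to obtain an elementary $G$-biliaison of height $1$ between $I$ and $C_{y,I}$ (after possibly verifying the ``mild hypotheses'' of the Main Theorem in the present setting, which amount to unmixedness/Cohen--Macaulayness conditions that follow from the vertex decomposability of $\init(I)$ together with standard upper-semicontinuity arguments for initial ideals). Since the Lex-initial ideals of $C_{y,I}$ and $N_{y,I}$ are Stanley--Reisner ideals of the link and deletion, which are vertex decomposable with compatible variable order on a strictly smaller set of variables, the inductive hypothesis applies and both are glicci. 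Being glicci is preserved under elementary $G$-biliaison, so $I$ is glicci as well. Iterating, the entire chain terminates at an ideal generated by a subset of the variables, which is a complete intersection; this recovers the statement that $I$ is in fact linked to an ideal of indeterminates, matching the abstract.

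The main obstacle I anticipate is the lifting step in the second paragraph: ensuring that the combinatorial link/deletion decomposition of $\Delta$ really is induced by a genuine geometric vertex decomposition of $I$, and in particular that $C_{y,I}$ and $N_{y,I}$ satisfy the technical hypotheses needed to apply Corollary \ref{cor:gvdToLia} (such as properness of the decomposition and the correct behavior of initial ideals under taking $C$ and $N$). Everything else — the induction, the transfer of glicci along biliaison, and the final identification of the terminal ideal as a complete intersection — is formal once this link is in place.
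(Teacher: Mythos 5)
Your proposal is substantively correct and follows the same road map as the paper, just with the two intermediate results rolled into a single induction. The paper factors the argument into (i) Proposition~\ref{prop:simplicial}, which translates the hypothesis ``$\text{in}_< I$ is the Stanley--Reisner ideal of a $<$-compatibly vertex decomposable complex'' into ``$I$ is $<$-compatibly geometrically vertex decomposable,'' and (ii) Theorem~\ref{thm:glicci} / Corollary~\ref{weakGVDimpliesgliggi}, which then shows geometrically vertex decomposable implies glicci through the inductive $G$-biliaison chain of Corollary~\ref{cor:gvdToLia}. Your induction fuses these two steps, which is perfectly fine. A few small remarks on where your worries actually resolve. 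The lifting step you flag as the main obstacle is exactly the content of Proposition~\ref{prop:simplicial}: the key observations are that since $\text{in}_< I$ is squarefree, $I$ is squarefree in the $<$-largest variable $y$, so by \cite[Theorem 2.1(b)]{KMY09} the initial $y$-form decomposition $\text{in}_y I = C_{y,I}\cap(N_{y,I}+\langle y\rangle)$ is a genuine geometric vertex decomposition, and by Lemma~\ref{lem:compatible} the Lex-initial ideals of $C_{y,I}$ and $N_{y,I}$ are the Stanley--Reisner ideals of $\text{star}_\Delta(y)$ and $\text{del}_\Delta(y)$, so their contractions to the smaller ring pick up the link and deletion as complexes on the remaining vertices. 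As for the hypotheses of Corollary~\ref{cor:gvdToLia}, you do not actually need a semicontinuity argument within the induction: the Cohen--Macaulayness of $N_{y,I}$ comes out of the inductive hypothesis itself (the contraction of $N_{y,I}$ is glicci by induction, hence Cohen--Macaulay), and the $G_0$ condition on $N_{y,I}$ — which you do not mention explicitly but which Corollary~\ref{cor:gvdToLia} requires — follows because a geometrically vertex decomposable ideal is radical (Proposition~\ref{prop:radical}). Unmixedness of $I$ at the very top is the one place semicontinuity-type reasoning is genuinely used, via the chain: pure vertex decomposable $\Rightarrow$ shellable $\Rightarrow$ $R/\text{in}_< I$ Cohen--Macaulay $\Rightarrow$ $R/I$ Cohen--Macaulay $\Rightarrow$ $I$ unmixed. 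Also remember the two degenerate cases ($C_{y,I}=\langle 1\rangle$, and $\sqrt{C_{y,I}}=\sqrt{N_{y,I}}$) need to be handled separately in the induction, as the paper does in Theorem~\ref{thm:glicci}; these are not biliaisons but rather reductions to fewer variables, and they are where the chain bottoms out at an ideal of indeterminates.
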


From this corollary, one can quickly deduce that certain well-known classes of varieties are glicci. We discuss three such classes in Section \ref{sect:applications}: matrix Schubert varieties, varieties of complexes, and varieties of graded lower bound cluster algebras.

Using the first half of our main theorem, we recover a result of U. Nagel and T. R\"omer from \cite{NR08}, namely that the Stanley--Reisner ideal of a vertex decomposable simplicial complex is glicci. 
In fact, U. Nagel and T. R\"omer showed, more generally, that the Stanley--Reisner ideal of a \emph{weakly vertex decomposable} simplicial complex is glicci  \cite[Theorem 3.3]{NR08}.  
Taking this as motivation, we define the class of \emph{weakly geometrically vertex decomposable} ideals (Definition \ref{def:weaklyGeometricallyVertexDec}), which includes both the geometrically vertex decomposable ideals and the Stanley--Reisner ideals of weakly vertex decomposable complexes. We show the following, labeled as Corollary \ref{weakGVDimpliesgliggi} in the main body of the paper:

\begin{thm}
Weakly geometrically vertex decomposable ideals are glicci.
\end{thm}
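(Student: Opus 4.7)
The plan is to argue by induction on the recursive complexity of a weak geometric vertex decomposition, closely mirroring the structure of the Nagel--R\"omer proof for weakly vertex decomposable simplicial complexes but using Corollary \ref{cor:gvdToLia} as the engine for each reduction step.

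\medskip

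For the base case, I would observe that the terminal objects in the recursive definition of weakly geometrically vertex decomposable ideals (the unit ideal, the zero ideal, and ideals generated by a subset of the variables) are immediately glicci: such ideals are complete intersections, so they are trivially in the Gorenstein liaison class of a complete intersection. For the inductive step, suppose $I$ is weakly geometrically vertex decomposable via a geometric vertex decomposition with respect to some variable $y$, producing ideals $C_{y,I}$ and $N_{y,I}$. The ``weak'' in the definition means only one of these two ideals (the one playing the role of the deletion, by analogy with the simplicial situation of \cite{NR08}) is required to be recursively weakly geometrically vertex decomposable; the other is allowed to be arbitrary. By the inductive hypothesis, that recursively structured piece, say $C_{y,I}$, is glicci.

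\medskip

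Next, I would invoke the first half of the Main Theorem (Corollary \ref{cor:gvdToLia}) to produce an elementary $G$-biliaison of height $1$ between $I$ and $C_{y,I}$. Since glicci-ness is preserved under elementary $G$-biliaison, and $C_{y,I}$ is glicci by induction, it follows that $I$ is glicci, completing the induction.

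\medskip

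The main obstacle is ensuring that the hypotheses required by Corollary \ref{cor:gvdToLia} (the ``mild hypotheses'' of the Main Theorem, including the appropriate unmixedness/containment conditions on $C_{y,I}$ and $N_{y,I}$ that distinguish a genuine geometric vertex decomposition) hold at every stage of the recursion. I expect the definition of weakly geometrically vertex decomposable in Definition \ref{def:weaklyGeometricallyVertexDec} to be calibrated precisely so that these hypotheses are automatic at each level; the verification of this compatibility is the principal technical content of the argument, but once it is in place the induction closes cleanly.
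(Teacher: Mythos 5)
Your proposal captures the paper's strategy: the paper proves this by observing that the proof of Theorem~\ref{thm:glicci} carries over to the weak setting, and that proof is precisely an induction in which each nondegenerate step invokes Corollary~\ref{cor:gvdToLia} to produce an elementary $G$-biliaison of height~$1$ from $C_{y,I}$ (recursively known to be glicci) to $I$. That is the same skeleton you describe.

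Two substantive gaps remain in your writeup. First, you treat only the nondegenerate case. Definition~\ref{def:weaklyGeometricallyVertexDec} has a separate degenerate branch, where it is the contraction of $N_{y,I}$, not $C_{y,I}$, that is recursively weakly geometrically vertex decomposable and where no biliaison occurs at all: one instead passes to $N_{y,I}$ (with or without adjoining $y$), using the observations preceding Theorem~\ref{thm:glicci} that elementary $G$-biliaisons are preserved under extension to $R[z]$ and under adding $\langle z\rangle$. An induction that only handles the nondegenerate branch is incomplete. Second, the hypothesis verification you defer is in fact the entire content of the paper's short proof and is not a formality one can wave away: one needs $N_{y,I}$ Cohen--Macaulay (supplied directly by the weak-GVD definition), $N_{y,I}$ radical (also supplied, and this is what yields $G_0$), and the consequences that a Cohen--Macaulay ideal is automatically unmixed and, unless it is the maximal ideal, saturated (Lemma~\ref{lem:saturated}); one also needs $C_{y,I}$ unmixed, which holds because weakly GVD ideals are required to be unmixed. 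Spelling this out is exactly what the paper does. A minor terminological slip: $C_{y,I}$ corresponds to the star/link of a vertex, not the deletion; the deletion corresponds to $N_{y,I}+\langle y\rangle$. You correctly chose $C_{y,I}$ as the recursively structured piece, so the mathematics is fine, but the analogy is stated backwards.
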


\subsubsection*{Third goal.} 
In  \cite[Lemma 1.12]{GMN13}, it is shown that one can use liaison to compare Hilbert functions when the degrees of the isomorphisms of the $G$-biliaisons involved in an inductive argument are known.  This approach is employed in many of the determinantal cases treated in the literature (\cite{Gor07, Gor08, GMN13, FK20}). It is worth noticing that the isomorphisms employed in these papers all have a similar form. 
We explain via Theorem \ref{thm:onestep} why this similarity is not a coincidence but, rather, is to be expected.  In that theorem, we associate an explicit isomorphism of degree $1$ to a geometric vertex decomposition.

In addition to the expository work of describing a unifying structure underlying examples already in the literature, Theorem \ref{thm:onestep} also provides a candidate isomorphism in the style of $G$-biliaison that, in good cases, allows one to use the framework of \cite{GMN13} to prove that a conjectured Gr\"obner basis is, indeed, a Gr\"obner basis.  Some consequences of Theorem \ref{thm:onestep} on Gr\"obner bases and degenerations appear in Subsection \ref{sect:GBapplications}.

\subsection*{The structure of the paper}

In Section \ref{sect:GVD}, we review definitions and key lemmas from \cite{KMY09} on geometric vertex decomposition in the unmixed case and record some additional observations about the structure of a geometrically vertex decomposable ideal.  In Section \ref{sect:liaison}, we briefly review background material on Gorenstein liaison.  In Sections \ref{sect:GVDGlicci} and \ref{sect:glicciGVD}, we provide a proof of our main theorem (stated above), and related results and examples. In Section \ref{sect:applications}, we prove that certain well-known classes of combinatorially-defined ideals are glicci, via the material in Section \ref{sect:GVDGlicci}.  Finally, we devote Section \ref{sect:nonpure} to the not necessarily unmixed case, which we relate to vertex decomposition in the not necessarily pure case.

\subsection*{Notational conventions} Throughout the paper, we let $\kappa$ be a field, which can be chosen arbitrarily except in Sections \ref{sect:GVDGlicci}, \ref{sect:applications}, and \ref{sect:nonpure}, where we require that $\kappa$ be infinite.

\subsection*{Acknowledgements}
We thank Sergio Da Silva, Elisa Gorla, Kuei-Nuan Lin, Yi-Huang Shen, Adam Van Tuyl, and Anna Weigandt for helpful conversations. We are also grateful to the anonymous referee for a very careful reading of the paper and for helpful feedback. Part of this work was completed at the Banff International Research Station (BIRS) during the Women in Commutative Algebra workshop in October 2019. We are grateful for the hospitality of the Banff Centre.  The second author was partially supported by NSERC grant RGPIN-2017-05732.

\section{Geometric vertex decomposition}\label{sect:GVD}

In this section we discuss geometric vertex decomposition, introduced by A. Knutson, E. Miller, and A. Yong in \cite{KMY09}. In the  first subsection, we recall the basics of vertex decomposition of simplicial complexes and Stanley--Reisner ideals. In the second subsection, we move beyond the monomial-ideal case and recall the basics of \emph{geometric} vertex decomposition from \cite{KMY09}. In the third subsection, we define and study \emph{geometrically vertex decomposable ideals}. Although the material in this last subsection is not known to the authors to be explicitly in the literature,
the results that appear will not be surprising to experts. 

\subsection{Vertex decomposition and Stanley--Reisner ideals}\label{sect:vertDecomp}
Let $\Delta$ be a simplicial complex on vertex set $[n] = \{1,2,\dots, n\}$ (without an insistence that every $v\in [n]$ necessarily be a face of $\Delta$). 
Given a vertex $v\in \Delta$, define the following three subcomplexes:
\begin{itemize}
\item the \textbf{star} of $v$ is the set $\text{star}_\Delta(v):= \{F\in \Delta\mid F\cup \{v\}\in \Delta\}$; 
\item the \textbf{link} of $v$ is the set $\text{lk}_{\Delta}(v) := \{F\in \Delta\mid F\cup\{v\}\in \Delta, F\cap \{v\}=\emptyset\}$;
\item the \textbf{deletion} of $v$ is the set $\text{del}_{\Delta}(v) := \{F\in \Delta\mid F\cap \{v\} = \emptyset\}.$ 
\end{itemize}
Recall that the \textbf{cone} from $v$ on a simplicial complex $\Delta$ is the smallest simplicial complex that contains the set $\{F\cup \{v\}\mid F\in \Delta\}$. 
Then $\text{star}_\Delta(v)$ is the cone from $v$ on $\text{lk}_\Delta(v)$  and 
\begin{equation}\label{eq:vertexDecomp}
\Delta = \text{star}_{\Delta}(v)\cup \text{del}_\Delta(v). 
\end{equation}

The decomposition of $\Delta$ in \eqref{eq:vertexDecomp} is called a \textbf{vertex decomposition}.

A simplicial complex is called \textbf{pure} if all of its facets (i.e., maximal faces) are of the same dimension. 
A simplicial complex $\Delta$ is \textbf{vertex decomposable} if it is pure and if $\Delta = \emptyset$, or $\Delta$ is a simplex, or there is a vertex $v\in \Delta$ such that $\text{lk}_\Delta(v)$ and $\text{del}_{\Delta}(v)$ are vertex decomposable.

Given a simplicial complex $\Delta$ on vertex set $[n]$, one defines the \textbf{Stanley--Reisner ideal} $I_{\Delta}\subseteq \kappa[x_1,\dots, x_n]$ associated to $\Delta$ as  $I_\Delta:= \langle \textbf{x}_F\mid F\subseteq [n], F\notin \Delta\rangle$, where $\textbf{x}_F := \prod_{i\in F}x_i$.  
The association $\Delta\mapsto I_\Delta$ determines a bijection between simplicial complexes on $[n]$ and squarefree monomial ideals in $\kappa[x_1,\dots, x_n]$. We write $\Delta(I)$ for the simplicial complex associated to a squarefree monomial ideal $I$. 

Notice that if $\Delta = \Delta_1\cup \Delta_2$ is a union of simplicial complexes on $[n]$, then $F$ is a non-face of $\Delta$ if and only if it is a non-face of both $\Delta_1$ and $\Delta_2$. Thus, $I_\Delta = I_{\Delta_1}\cap I_{\Delta_2}$. In particular, if $v$ is a vertex of $\Delta$, we may decompose $\Delta$ as in \eqref{eq:vertexDecomp} to get
\[
I_\Delta = I_{\text{star}_\Delta(v)}\cap I_{\text{del}_\Delta(v)}.
\]

The following is immediate from the definitions. We record it as a lemma for easy reference.

\begin{lemma}\label{lem:starDelLink}
Let $v\in [n]$ be a vertex of $\Delta$. Write $I_{\Delta} = \langle x_v^{d_i}q_i\mid 1\leq i\leq m\rangle$ where $q_i$ is a squarefree monomial that is not divisible by $x_v$ and $d_i = 0$ or $1$. Then
\[
I_{\text{star}_\Delta(v)} = \langle q_i \mid 1\leq i\leq m\rangle,\quad I_{\text{lk}_\Delta(v)} = I_{\text{star}_\Delta(v)}+\langle x_v\rangle, \quad
I_{\text{del}_\Delta(v)} = \langle q_i \mid d_i = 0\rangle+\langle x_v\rangle.
\]
\end{lemma}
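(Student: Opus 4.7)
The plan is to unwind each set-theoretic definition and translate it directly into the language of monomial divisibility in $I_\Delta$. For a subset $F \subseteq [n]$, I would write $F' := F \setminus \{v\}$, so that $\bx_F = x_v^\epsilon \bx_{F'}$ with $\epsilon \in \{0,1\}$ and $x_v \nmid \bx_{F'}$.

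For the first equality, the key observation is that $F$ is a non-face of $\text{star}_\Delta(v)$ iff $F \cup \{v\} \notin \Delta$ iff $x_v \bx_{F'} \in I_\Delta$. Since $x_v \bx_{F'}$ is divisible by $x_v$ while each $q_i$ is coprime to $x_v$, the condition $x_v^{d_i} q_i \mid x_v \bx_{F'}$ is equivalent to $q_i \mid \bx_{F'}$ regardless of whether $d_i = 0$ or $1$. This shows $I_{\text{star}_\Delta(v)} \subseteq \langle q_i \rangle$. For the reverse inclusion, I would let $G_i \subseteq [n]\setminus\{v\}$ denote the support of $q_i$ and observe that $x_v q_i = x_v^{1-d_i}\bigl(x_v^{d_i} q_i\bigr) \in I_\Delta$, so $G_i \cup \{v\}$ is a non-face of $\Delta$, i.e., $G_i$ is a non-face of $\text{star}_\Delta(v)$, giving $q_i \in I_{\text{star}_\Delta(v)}$.

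The link equality will follow from the elementary observation $\text{lk}_\Delta(v) = \text{del}_{\text{star}_\Delta(v)}(v)$ (equivalently, $\text{star}_\Delta(v)$ is the cone over $\text{lk}_\Delta(v)$ with apex $v$). Viewed as a complex on $[n]$, the link has $\{v\}$ as a new non-face, contributing $x_v$ to the ideal, while all other non-faces coincide with non-faces of $\text{star}_\Delta(v)$. For the deletion equality, $F$ is a non-face of $\text{del}_\Delta(v)$ iff $v \in F$ or $F \notin \Delta$; the first case contributes $\langle x_v\rangle$. In the second case, divisibility $x_v^{d_i} q_i \mid \bx_F$ together with $x_v \nmid \bx_F$ forces $d_i = 0$, showing $\bx_F \in \langle q_i : d_i = 0\rangle$. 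Conversely, for each $i$ with $d_i = 0$, the support $G_i$ of $q_i$ satisfies $v \notin G_i$ and $G_i \notin \Delta$, so $q_i \in I_{\text{del}_\Delta(v)}$.

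The only mild subtlety is that the generating set $\{x_v^{d_i} q_i\}$ is not assumed to be minimal, so each equality must be verified as a two-sided inclusion rather than read off from a comparison of minimal generators; the divisibility arguments above handle this uniformly, and I do not anticipate any serious obstacle beyond this bookkeeping.
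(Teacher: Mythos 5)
The paper records this lemma as ``immediate from the definitions'' and gives no proof, so there is nothing to compare against; your argument is a correct and careful unpacking of exactly those definitions, translating ``non-face of star/link/deletion'' into monomial divisibility conditions and verifying both inclusions. One phrase is slightly loose---you say the non-faces of $\text{lk}_\Delta(v)$ other than $\{v\}$ ``coincide with non-faces of $\text{star}_\Delta(v)$,'' which is not literally true for sets $F$ with $v\in F$ (such $F$ can be a face of the star but is always a non-face of the link); however, since $\mathbf{x}_F$ for such $F$ is divisible by $x_v$ and hence absorbed into $\langle x_v\rangle$, the ideal-theoretic conclusion $I_{\text{lk}_\Delta(v)} = I_{\text{star}_\Delta(v)}+\langle x_v\rangle$ is unaffected and the proof stands.
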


\subsection{Geometric vertex decomposition}
In this subsection, we discuss \emph{geometric vertex decomposition}, introduced by A. Knutson, E. Miller, and A. Yong in \cite{KMY09}. 

Let $R = \kappa[x_1,\dots, x_n]$ be a polynomial ring in $n$ indeterminates and let $y = x_j$ for some $1 \leq j \leq n$.  Define the \textbf{initial $y$-form} $\text{in}_y f$ of a polynomial $f\in R$ to be the sum of all terms of $f$ having the highest power of $y$. That is, if $f = \sum_{i = 0}^n \alpha_i y^i$, where each $\alpha_i\in \kappa[x_1,\dots,\widehat{y},\dots, x_n]$ and $\alpha_n\neq 0$, define $\text{in}_y f :=\alpha_n y^n$, which is usually not a monomial. Given an ideal $J\subseteq R$, define $\text{in}_y J$ to be the ideal generated by the initial $y$-forms of the elements of $J$, that is, $\text{in}_y J: = \langle \text{in}_y f \mid f \in J \rangle.$ 
We say that a monomial order $<$ on $R$ is \textbf{$y$-compatible} if it satisfies
$\text{in}_< f = \text{in}_<(\text{in}_y f)$ 
for every $f\in R$. In this case, one has $\text{in}_<(\text{in}_y J) = \text{in}_< J$ for any ideal $J\subseteq R$.

Let $I\subseteq R$ be an ideal and $<$ a $y$-compatible monomial order.  
With respect to $<$, let $\mathcal{G} = \{y^{d_i}q_i+r_i\mid 1\leq i\leq m\}$ be a Gr\"obner basis of $I$ where $y$ does not divide any $q_i$ and $\text{in}_{y}(y^{d_i}q_i+r_i) = y^{d_i}q_i$. One easily checks that the ideal $\text{in}_y I$ is generated by $\text{in}_y \mathcal{G}:= \{y^{d_i}q_i\mid 1\leq i\leq m\}$. That is, $\text{in}_{y} I = \langle y^{d_i}q_i\mid 1\leq i\leq m\rangle$. 


\begin{definition}\label{def:gvdKMS}\cite[Section 2.1]{KMY09}
Define $C_{y,I} := \langle q_i\mid 1\leq i\leq m\rangle$ and $
N_{y,I} := \langle q_i\mid d_i = 0\rangle.$ 
When $\text{in}_y I = C_{y,I} \cap (N_{y,I}+\langle y \rangle)$, this decomposition is called a \textbf{geometric vertex decomposition of $I$ with respect to $y$}.  
\end{definition}

The ideals $C_{y,I}$ and $N_{y,I}$ do not depend on the choice of Gr\"obner basis and, in particular, do not depend on the choice of $y$-compatible term order $<$.  
This follows from the facts that $C_{y,I} = (\text{in}_y I :y^\infty)$ by \cite[Theorem 2.1(d)]{KMY09} and that $N_{y,I}+\langle y\rangle = \text{in}_y I+\langle y\rangle$ by \cite[Theorem 2.1 (a)]{KMY09}, together with the observation that $y$ does not appear in the generators of $N_{y,I}$ given in its definition. 

 We say that a geometric vertex decomposition is \textbf{degenerate} if $\sqrt{C_{y,I}} = \sqrt{N_{y,I}}$ or if $C_{y,I} = \langle 1 \rangle$ and \textbf{nondegenerate} otherwise. As we will see through Lemma \ref{lem:form}, if $C_{y,I} = \langle 1 \rangle$, then some polynomial whose initial $y$-form is a unit multiple of $y$ is an element of $I$, in which case $R/I \cong R/(N_{y,I}+\langle y \rangle)$.   If  $\sqrt{C_{y,I}} = \sqrt{N_{y,I}}$, then $\sqrt{\text{in}_y I} = \sqrt{C_{y,I}} \cap \sqrt{N_{y,I}+\langle y \rangle} = \sqrt{C_{y,I}}$, in which case $\text{in}_y I$, $C_{y,I}$, and $N_{y,I}$ all determine the same variety.  In both of these cases, we may often prefer to study $N_{y,I}$ in the smaller polynomial ring that omits $y$.  This is especially true when $I$ is radical for the following reason:
 
 \begin{proposition}\label{prop:degen-rad}
 If $I$ is radical and has a degenerate geometric vertex decomposition $\text{in}_y I = C_{y,I} \cap (N_{y,I}+\langle y \rangle)$ with $\sqrt{N_{y,I}} = \sqrt{C_{y,I}}$, then the reduced Gr\"obner basis of $I$ does not involve $y$ and $I = \text{in}_y I = C_{y,I} = N_{y,I}$.
 \end{proposition}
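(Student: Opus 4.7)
The plan is to use radicality of $I$ together with the degenerate hypothesis $\sqrt{C_{y,I}} = \sqrt{N_{y,I}}$ to force $I$, $\text{in}_y I$, $C_{y,I}$, and $N_{y,I}$ to coincide, from which the assertion about the reduced Gr\"obner basis follows at once. The two key tools will be the $y$-compatibility identity $\text{in}_<(\text{in}_y J) = \text{in}_< J$ and the standard rigidity fact that if $J \subseteq I$ and $\text{in}_< J = \text{in}_< I$ then $J = I$ (proved by picking an element of $I \setminus J$ of minimal leading term and reducing).

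First I would show $N_{y,I} \subseteq I$: for each index $j$ with $d_j = 0$, the Gr\"obner basis element $g_j = y^{d_j}q_j + r_j = q_j + r_j$ forces $r_j$ to have $y$-degree strictly less than $0$, hence $r_j = 0$ and $q_j = g_j \in I$. Combining this with the hypothesis and radicality gives
\[
C_{y,I} \;\subseteq\; \sqrt{C_{y,I}} \;=\; \sqrt{N_{y,I}} \;\subseteq\; \sqrt{I} \;=\; I,
\]
and in particular $\text{in}_y I \subseteq C_{y,I} \subseteq I$. Since $<$ is $y$-compatible, $\text{in}_<(\text{in}_y I) = \text{in}_< I$, so the rigidity fact applied to $\text{in}_y I \subseteq I$ gives $\text{in}_y I = I$, and the sandwich $\text{in}_y I \subseteq C_{y,I} \subseteq I$ collapses to $I = \text{in}_y I = C_{y,I}$. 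From the defining decomposition $\text{in}_y I = C_{y,I} \cap (N_{y,I} + \langle y \rangle)$ together with $\text{in}_y I = C_{y,I}$, I then get $C_{y,I} \subseteq N_{y,I} + \langle y \rangle$; writing a generator $q_i$ of $C_{y,I}$ as $n + yh$ with $n \in N_{y,I}$, expanding $n$ and $h$ in powers of $y$, and matching $y$-constant parts yields $q_i \in N_{y,I}$, because the $y$-constant part of any $n \in N_{y,I}$ again lies in $N_{y,I}$ (obtained by truncating to degree $0$ in $y$ the coefficients in any expression of $n$ in the $y$-free generators of $N_{y,I}$). Hence $C_{y,I} = N_{y,I}$.

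With $I = \text{in}_y I = C_{y,I} = N_{y,I}$ established, the Gr\"obner basis statement is a corollary: since $I = N_{y,I}$ is generated by polynomials not involving $y$, and $<$ restricts to a monomial order on $\kappa[x_1,\ldots,\widehat{y},\ldots,x_n]$, Buchberger's algorithm starting from these generators only ever forms $y$-free S-polynomials and reductions, so the reduced Gr\"obner basis of $I$ is $y$-free. I expect the main obstacle to be the core application of $y$-compatibility together with the rigidity fact to collapse $I$ and $\text{in}_y I$; the constant-in-$y$ part argument is essentially bookkeeping but requires care because elements of $N_{y,I}$ may \emph{a priori} involve $y$ even though its generators do not.
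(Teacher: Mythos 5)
Your proof is correct, but it takes a genuinely different route from the paper's. The paper's argument begins by proving that $N_{y,I}$ is itself radical (using that $I$ is radical and that any $y$-free element of $I$ has a Gr\"obner reduction using only the $y$-free elements of the reduced Gr\"obner basis, which generate $N_{y,I}$); this immediately yields $N_{y,I} = \sqrt{N_{y,I}} = \sqrt{C_{y,I}} \supseteq C_{y,I}$, hence $N_{y,I} = C_{y,I}$, and the $y$-freeness of the reduced Gr\"obner basis is then deduced by contradiction, by supposing it contained some $y^dq + r$ with $d>0$ and observing that $q$ would then have to be reducible against another element of the reduced Gr\"obner basis, which is impossible. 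Your argument instead first establishes the containment $C_{y,I} \subseteq \sqrt{C_{y,I}} = \sqrt{N_{y,I}} \subseteq \sqrt{I} = I$, then collapses the sandwich $\text{in}_y I \subseteq C_{y,I} \subseteq I$ using $y$-compatibility plus the standard rigidity fact that an inclusion of ideals with equal initial ideals must be an equality, and finishes by a $y$-degree-zero bookkeeping argument to identify $C_{y,I}$ with $N_{y,I}$, with the $y$-free Gr\"obner basis coming out as a Buchberger observation. Both are valid; the paper's gets the equality $C_{y,I}=N_{y,I}$ more directly from radicality of $N_{y,I}$ and has to work only at the level of Gr\"obner basis elements, while yours buys a cleaner logical separation---first the ideal equalities, then the Gr\"obner consequence---at the mild cost of invoking the rigidity lemma and the slightly delicate matching-of-constant-$y$-parts step. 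You might note that your opening step ($N_{y,I}\subseteq I$) can be shortened: by Lemma~\ref{lem:form} the $y$-free Gr\"obner generators $h_1,\ldots,h_\ell$ of $I$ already generate $N_{y,I}$, so the containment is immediate without arguing $r_j = 0$.
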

 \begin{proof}
 Throughout this argument, we will refer to the reduced Gr\"obner basis of $I$ as the Gr\"obner generators of $I$ and the generators of $N_{y,I}$ obtained in Definition \ref{def:gvdKMS} from the reduced Gr\"obner basis of $I$ as the Gr\"obner generators of $N_{y,I}$.
 
 We claim first that $N_{y,I}$ must also be radical.  Fix some $g^t \in N_{y,I}$ for $t \geq 1$.  Because $N_{y,I}$ has a generating set that does not involve $y$, we may assume without loss of generality that $g$ does not involve $y$.  Because $g^t \in N_{y,I} \subseteq I = \sqrt{I}$, we also have $g \in I$, and so $g$ must have a Gr\"obner reduction by elements of the reduced Gr\"obner basis of $I$.  Because $g$ does not involve $y$, this reduction must use only those Gr\"obner generators that do not involve $y$, which are exactly the Gr\"obner generators of $N_{y,I}$, and so $g \in N_{y,I}$.  
 
Hence, $\sqrt{C_{y,I}} = \sqrt{N_{y,I}} = N_{y,I} \subseteq C_{y,I}$, and so $N_{y,I} = C_{y,I}$.  Suppose now that the reduced Gr\"obner basis of $I$ has some element of the form $y^dq+r$ for $d>0$.  Then $q \in C_{y,I} = N_{y,I}$, and so the lead term of $q$ must be divisible by the lead term of one of the Gr\"obner generators of $N_{y,I}$.  But any such generator is also an element of the reduced Gr\"obner basis of $I$ and so cannot divide the lead term of $q$ since then it would divide the lead term of $y^dq+r$.  Hence, the reduced Gr\"obner basis of $I$ has no term involving $y$, from which it follows that $I = \text{in}_y I = C_{y,I} = N_{y,I}$.
 \end{proof}

\begin{remark}\label{rem:GVDsimplicial}
Let $\Delta$ be a simplicial complex on vertex set $[n]$, and let $v$ be a vertex of $\Delta$. 
The geometric vertex decomposition of $I_\Delta\subseteq R$ with respect to variable $x_v$ agrees with the decomposition 
\[
I_{\Delta} = I_{\text{star}_\Delta(v)}\cap I_{\text{del}_\Delta(v)}.
\]
Indeed, $\text{in}_{x_v}I_\Delta = I_\Delta$, $I_{\text{star}_\Delta(v)} = C_{x_v,I_\Delta}$, and $I_{\text{del}_\Delta(v)} = N_{x_v,I_{\Delta}}+\langle x_v\rangle$ (see the end of Section \ref{sect:vertDecomp}). 
Observe that since $v\in \Delta$, we have $C_{x_v,I_\Delta} \neq \langle 1\rangle$. Thus, the geometric vertex decomposition is degenerate if and only if $\Delta$ is a cone from $v$ on $\text{lk}_\Delta(v)$.
\end{remark}

If an ideal $I\subseteq R$ has a generating set $\mathcal{G}$ in which $y^2$ does not divide any term of $g$ for any $g \in \mathcal{G}$, then we say that $I$ is \textbf{squarefree in $y$}. 
It is easy to see (for example, by considering $S$-pair reductions) that every ideal that is squarefree in $y$ has a Gr\"obner basis, with respect to any $y$-compatible term order, such that $y^2$ does not divide any term of any element of the Gr\"obner basis. 

\begin{lemma}\label{lem:form}
If $I \subseteq R$ possesses a geometric vertex decomposition with respect to  a variable $y = x_j$ of $R$, then $I$ is squarefree in $y$, and the reduced Gr\"obner basis of $I$ with respect to any $y$-compatible term order has the form $\{yq_1+r_1,\dots, yq_k+r_k,h_1,\dots, h_\ell\}$ where $y$ does not divide any term of any $q_i$ or $r_i$ for any $ 1 \leq i \leq k$ nor any $h_j$ for any $1 \leq j \leq \ell$.
\end{lemma}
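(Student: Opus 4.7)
The plan is to fix a $y$-compatible term order $<$ and write the reduced Gröbner basis of $I$ in the form $\{y^{d_i}q_i + r_i : 1 \leq i \leq m\}$ as in Definition \ref{def:gvdKMS}, where $y$ does not divide any $q_i$ and $\text{in}_y(y^{d_i}q_i+r_i) = y^{d_i}q_i$. I would reduce the statement of the lemma to showing $d_i \in \{0,1\}$ for every $i$: since $r_i$ has strictly smaller $y$-degree than $d_i$, bounding $d_i \leq 1$ automatically forces $r_i$ to involve no $y$, so elements with $d_i=1$ become the $yq_i + r_i$ of the claim and elements with $d_i=0$ become the $h_j$. The ``squarefree in $y$'' conclusion then follows because this reduced Gröbner basis is a generating set with no $y^2$ in any term.

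The heart of the argument is to use the geometric vertex decomposition hypothesis to show $yq_i \in \text{in}_y I$ for every $i$. Indeed, $q_i$ is a generator of $C_{y,I}$, so $yq_i \in C_{y,I}$; and $yq_i \in \langle y\rangle \subseteq N_{y,I} + \langle y \rangle$; invoking the hypothesis $\text{in}_y I = C_{y,I} \cap (N_{y,I}+\langle y\rangle)$ then gives $yq_i \in \text{in}_y I$. Passing to leading monomials via $y$-compatibility together with the identity $\text{in}_<(\text{in}_y J) = \text{in}_< J$ recalled in Section \ref{sect:GVD}, and using that $q_i$ involves no $y$, we obtain $y\cdot \text{in}_<(q_i) = \text{in}_<(yq_i) \in \text{in}_< I$. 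This step — translating membership in the $y$-initial ideal into divisibility in the $<$-initial ideal — is the one subtle point I expect to need the most care, and it is exactly where $y$-compatibility is used.

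With that in hand, I would conclude by contradiction: suppose some $d_i \geq 2$, and set $g := y^{d_i}q_i + r_i$. Since $y\cdot\text{in}_<(q_i) \in \text{in}_< I$, some reduced Gröbner basis element's leading monomial $y^{d_j}\text{in}_<(q_j)$ divides it, forcing $d_j \leq 1$ and $\text{in}_<(q_j) \mid \text{in}_<(q_i)$. But then, since $d_j \leq 1 \leq d_i$, we have $y^{d_j}\text{in}_<(q_j) \mid y^{d_i}\text{in}_<(q_i) = \text{in}_<(g)$. In a reduced Gröbner basis, the leading monomial of $g$ is not divisible by the leading monomial of any other basis element, so this divisibility forces $y^{d_j}q_j + r_j = g$, which is impossible because $d_j \leq 1 < d_i$. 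Hence $d_i \in \{0,1\}$ for every $i$, completing the proof.
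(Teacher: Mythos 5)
Your argument is correct and takes essentially the same route as the paper: fix a $y$-compatible order, write the reduced Gr\"obner basis as $\{y^{d_i}q_i+r_i\}$, observe $yq_i\in C_{y,I}\cap(N_{y,I}+\langle y\rangle)=\text{in}_y I$, and deduce $d_i\leq 1$. Where the paper compresses the final deduction into ``Hence, we may assume $d_i\leq 1$,'' you spell out the divisibility-and-reducedness contradiction, which is the correct way to fill that gap.
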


\begin{proof}
Fix a $y$-compatible term order $<$, and let $\mathcal{G} = \{y^{d_1}q_1+r_1,\dots, y^{d_m}q_m+r_m\}$ be the reduced Gr\"obner basis of $I$ with respect to $<$, where $y^{d_i}q_i = \text{in}_y(y^{d_i}q_i+r_i)$ and $y$ does not divide any term of $q_i$ for any $1 \leq i \leq m$.   Observe that $yq_i\in C_{y,I} \cap (N_{y,I}+\langle y \rangle)$ for each $1 \leq i \leq m$, so $\text{in}_y I = C_{y,I} \cap (N_{y,I}+\langle y \rangle)$ implies $yq_i\in \text{in}_y I$ for each $1 \leq i \leq m$. Hence, we may assume $d_i \leq 1$ for each $1 \leq i \leq m$.  
The remaining statements now follow easily.
\end{proof}

\subsection{Geometrically vertex decomposable ideals}
A geometric vertex decomposition of an ideal is analogous to a vertex decomposition of a simplicial complex into a deletion and star (see Remark \ref{rem:GVDsimplicial}). In this subsection, we extend this analogy by considering \emph{geometrically vertex decomposable ideals}, which are analogous to vertex decomposable simplicial complexes. We again let $R = \kappa[x_1,\dots, x_n]$ throughout this subsection. 
Recall that an ideal $I\subseteq R$ is \textbf{unmixed} if $\dim(R/P) = \dim(R/I)$ for all $P \in \text{Ass}(I)$.

\begin{definition}\label{def:geometricallyVertexDec}
An ideal $I\subseteq R$ is \textbf{geometrically vertex decomposable} if $I$ is unmixed and if
\begin{enumerate}
\item $I = \langle 1\rangle$ or $I$ is generated by indeterminates in $R$, or
\item for some variable $y = x_j$ of $R$, 
$\text{in}_y I =  C_{y,I}  \cap (N_{y,I}+\langle y\rangle)$ is a geometric vertex decomposition and the contractions of $N_{y,I}$ and $C_{y,I}$ to $\kappa[x_1,\dots,\widehat{y},\dots, x_n]$ are geometrically vertex decomposable. \qedhere
\end{enumerate}
\end{definition}

We take case $(1)$ to include the zero ideal, whose (empty) generating set vacuously consists only of indeterminates. 
We will soon need observations about the relative heights of the ideals $I$, $C_{y,I}$, and $N_{y,I} $ in the circumstances of condition $(2)$.  The degenerate cases are clear: if $C_{y,I} = \langle 1 \rangle$, then $\hgt(I) = \hgt(N_{y,I} )+1$ and, if $\sqrt{C_{y,I}} = \sqrt{N_{y,I}}$, then $\hgt(I) = \hgt(\text{in}_y I) = \hgt(C_{y,I})=\hgt(N_{y,I} )$. The nondegenerate case is handled by the lemma below.

 We say that the ring $R/I$ is \emph{equidimensional} if $\dim(R/P) = \dim(R/I)$ for all minimal primes $P$ of $I$ or, equivalently, if all irreducible components of the variety of $I$ have the same dimension.  Equidimensionality does not preclude the possibility that $I$ might have embedded primes and so is weaker than unmixedness.

\begin{lemma}\label{lem:height}
If $I\subseteq R$ is an ideal so that $R/I$ is equidimensional and $\text{in}_y I =  C_{y,I}  \cap (N_{y,I} + \langle y \rangle)$ is a nondegenerate geometric vertex decomposition with respect to some variable $y = x_j$ of $R$, then $\hgt(C_{y,I}) = \hgt(I) = \hgt(N_{y,I} )+1$.  Moreover, $R/C_{y,I}$ is equidimensional.
\end{lemma}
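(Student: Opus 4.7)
The plan is to combine the fact that $\text{in}_y$ is a flat Gr\"obner degeneration (via the weight vector assigning weight $1$ to $y$ and weight $0$ to the remaining variables) with the structural observation that both $C_{y,I}$ and $N_{y,I}$ are extended from the subring $S = \kappa[x_1, \ldots, \widehat{y}, \ldots, x_n]$. Setting $d = \dim(R/I)$, the heart of the argument will be to identify suitable components of $V(C_{y,I})$ and $V(N_{y,I}+\langle y\rangle)$ as top-dimensional components of $V(\text{in}_y I)$.

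The first step is to promote the equidimensionality of $R/I$ to that of $R/\text{in}_y I$. Consider the one-parameter flat family $\mathcal{X} \subseteq \mathbb{A}^n \times \mathbb{A}^1_t$ with fiber $V(I)$ over $t=1$ and fiber $V(\text{in}_y I)$ over $t=0$. Over $\mathbb{G}_m = \mathbb{A}^1_t \setminus \{0\}$, the family is abstractly isomorphic to $V(I) \times \mathbb{G}_m$ via the $\mathbb{G}_m$-action scaling $y$, and hence is equidimensional of dim $d+1$; since flatness prevents any component of $\mathcal{X}$ from lying in $V(t)$, the whole of $\mathcal{X}$ is equidimensional of dim $d+1$. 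Intersecting each component with $V(t)$ gives a pure set of dim $d$, and a short maximality argument shows that every irreducible component of the union $V(\text{in}_y I) = \bigcup_i (\mathcal{Y}_i \cap V(t))$ has dim $d$. In particular $\hgt(\text{in}_y I) = \hgt(I)$.

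Next, write $C_{y,I} = C_S \cdot R$ and $N_{y,I} = N_S \cdot R$ for ideals $C_S, N_S \subseteq S$, so that $V(C_{y,I}) = V(C_S) \times \mathbb{A}^1_y$ and $V(N_{y,I}+\langle y\rangle) = V(N_S) \times \{0\}$. Every irreducible component of $V(C_{y,I})$ has the form $Z \times \mathbb{A}^1_y$ and so does not lie in $V(y)$. Since $V(N_{y,I}+\langle y\rangle) \subseteq V(y)$, any such $Z \times \mathbb{A}^1_y$ is maximal among irreducible subsets of $V(\text{in}_y I) = V(C_{y,I}) \cup V(N_{y,I}+\langle y\rangle)$, hence a component of $V(\text{in}_y I)$, and hence of dim $d$ by the previous paragraph. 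This simultaneously yields $\hgt(C_{y,I}) = \hgt(I)$ and the equidimensionality of $R/C_{y,I}$.

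Finally, nondegeneracy combined with $N_{y,I} \subseteq C_{y,I}$ forces $\sqrt{N_S} \subsetneq \sqrt{C_S}$, i.e.\ $V(C_S) \subsetneq V(N_S)$ in $\mathbb{A}^{n-1}$. Picking a component $W$ of $V(N_S)$ not contained in $V(C_S)$, the set $W \times \{0\}$ is a component of $V(N_{y,I}+\langle y\rangle)$ that is not contained in $V(C_{y,I})$, so the same maximality argument makes it a component of $V(\text{in}_y I)$, necessarily of dim $d$. This forces $\hgt(N_{y,I}) + 1 = \hgt(N_{y,I}+\langle y\rangle) = \hgt(I)$, completing the proof. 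The main technical obstacle is the equidimensionality preservation in the first step; everything else is a routine matter of tracking components and exploiting that $C_{y,I}$ and $N_{y,I}$ are extended from $S$.
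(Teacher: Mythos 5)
Your proof is correct and follows essentially the same route as the paper's: the key step in both is passing equidimensionality from $R/I$ to $R/\text{in}_y I$ via the flat $\kappa[t]$-family (the paper works directly with $R[t]/\tilde I$ and cites Eisenbud 15.17; you package the same fact as a $\mathbb{G}_m$-equivariant family), and the remaining height and equidimensionality claims in both arguments come down to the observation that $C_{y,I}$ and $N_{y,I}$ are extended from $\kappa[x_1,\dots,\widehat y,\dots,x_n]$, so that $V(C_{y,I})$ is a cylinder over $\mathbb{A}^1_y$ disjoint (component-wise) from the hyperplane $V(y)$ containing $V(N_{y,I}+\langle y\rangle)$, with nondegeneracy ensuring $V(N_{y,I})\not\subseteq V(C_{y,I})$. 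The paper expresses this last part in terms of containments among minimal primes rather than components, but it is the same argument.
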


\begin{proof}
By Lemma \ref{lem:form}, $I$ has a reduced Gr\"obner basis of the form $\{yq_1+r_1, \ldots, yq_k+r_k, h_1, \ldots, h_\ell\}$ where $y$ does not divide any term of any $q_i$ or $r_i$, $1 \leq i \leq k$, nor any or $h_j$, $1 \leq j \leq \ell$. Let $\tilde{I}\subseteq R[t]$ be the ideal $\tilde{I} = \langle yq_1+tr_1, \ldots, yq_k+tr_k, h_1, \ldots, h_\ell\rangle$.  Using \cite[Theorem 15.17]{Eis95}, $R[t]/\tilde{I}\otimes_{\kappa[t]}\kappa[t,t^{-1}]\cong (R/I)[t,t^{-1}]$.  Clearly, $R/I$ equidimensional implies $(R/I)[t,t^{-1}]$ equidimensional, and so $R[t]/\tilde{I}\otimes_{\kappa[t]}\kappa[t,t^{-1}]$ is equidimensional. By \cite[Theorem 15.17]{Eis95}, $R[t]/\tilde{I}$ is flat as a $\kappa[t]$-module. By this flatness, $t$ is not a zero-divisor on $R[t]/\tilde{I}$, and so no minimal prime of $R[t]/\tilde{I}$ contains $t$.  Then because the primes of $R[t]/\tilde{I}\otimes_{\kappa[t]}\kappa[t,t^{-1}]$ are in correspondence with the primes of $R[t]/\tilde{I}$ that do not contain $t$, $R[t]/\tilde{I}$ is equidimensional as well.  Finally, because $R[t]/\langle \tilde{I}, t \rangle \cong R/\text{in}_y I$, it suffices to note that every minimal prime over $\langle t \rangle$ in $R[t]/ \tilde{I}$ has height exactly one as a consequence of Krull's principal ideal theorem and the fact that $t$ is not a zero-divisor on $R[t]/ \tilde{I}$.  Hence, $R/\text{in}_y I$ is equidimensional.

Because $\text{in}_y I =  C_{y,I}  \cap (N_{y,I} + \langle y \rangle)$, each minimal prime of $\text{in}_y I$ is either a minimal prime of $C_{y,I}$ or of  $N_{y,I} + \langle y \rangle$.  Conversely, each minimal prime of $N_{y,I}+\langle y \rangle$ either is a minimal prime of $\text{in}_y I$ or contains some minimal prime of $C_{y,I}$.  (Because $y \notin C_{y,I}$, no minimal prime of $C_{y,I}$ can contain any minimal prime of $N_{y,I}+\langle y \rangle$.)  Hence, because $\hgt(\text{in}_y I )= \hgt (I)$, we will have $ \hgt(C_{y,I}) = \hgt(I) = \hgt(N_{y,I} )+1$ so long as some minimal prime of $N_{y,I}+\langle y \rangle$ does not contain a minimal prime of $C_{y,I}$, i.e., so long as $\sqrt{C_{y,I}} \not\subseteq \sqrt{N_{y,I}+\langle y \rangle} $.  Because $C_{y,I}$ and $N_{y,I}$ have generating sets that do not involve $y$, we cannot have $\sqrt{C_{y,I}} \subseteq \sqrt{N_{y,I}+\langle y \rangle}$ unless $\sqrt{C_{y,I}} \subseteq \sqrt{N_{y,I}}$.  But $N_{y,I} \subseteq C_{y,I}$, so $\sqrt{C_{y,I}} \subseteq \sqrt{N_{y,I}}$ would imply $\sqrt{C_{y,I}} = \sqrt{N_{y,I}}$, contradicting the assumption of nondegeneracy.

Finally, because every minimal prime of $\sqrt{C_{y,I}}$ is a minimal prime of $\text{in}_y I$, equidimensionality of $R/C_{y,I}$ follows from equidimensionality of $R/\text{in}_y I$.  
\end{proof}

As noted above, the definition of a geometrically vertex decomposable ideal is analogous to the definition of a vertex decomposable simplicial complex. In particular, we have the following proposition, whose proof we leave as an exercise:

\begin{proposition}\label{prop:VDiffGVD}
Let $\Delta$ be a simplicial complex on vertex set $[n]$. Its Stanley--Reisner ideal $I_\Delta \subseteq R$ is geometrically vertex decomposable if and only if $\Delta$ is vertex decomposable.
\end{proposition}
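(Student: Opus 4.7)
The plan is to prove both directions simultaneously by induction on $n$, using Remark \ref{rem:GVDsimplicial} as the bridge between the two definitions. Two preparatory observations frame the argument: first, since $I_\Delta$ is radical, $\mathrm{Ass}(I_\Delta)$ consists exactly of the minimal primes $\langle x_i : i \notin F\rangle$ as $F$ ranges over the facets of $\Delta$, so $I_\Delta$ is unmixed if and only if all facets of $\Delta$ have the same cardinality, i.e., $\Delta$ is pure. Second, $I_\Delta = \langle 1 \rangle$ or $I_\Delta$ is generated by indeterminates precisely when $\Delta$ has no faces or is a simplex on a subset of $[n]$; these are exactly the base cases in the definition of vertex decomposability, where the equivalence is immediate.

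For the inductive step, assume $\Delta$ is not in one of these base cases. For the ``$\Leftarrow$'' direction, choose a vertex $v \in \Delta$ so that $\text{lk}_\Delta(v)$ and $\text{del}_\Delta(v)$ are vertex decomposable. By Remark \ref{rem:GVDsimplicial}, $\text{in}_{x_v} I_\Delta = I_\Delta = I_{\text{star}_\Delta(v)} \cap I_{\text{del}_\Delta(v)}$ is a geometric vertex decomposition with $C_{x_v, I_\Delta} = I_{\text{star}_\Delta(v)}$ and $N_{x_v, I_\Delta} + \langle x_v \rangle = I_{\text{del}_\Delta(v)}$. Using the explicit generators recorded in Lemma \ref{lem:starDelLink} (which involve no $x_v$), one checks that the contractions of $C_{x_v, I_\Delta}$ and $N_{x_v, I_\Delta}$ to $\kappa[x_1, \dots, \widehat{x_v}, \dots, x_n]$ are respectively the Stanley--Reisner ideals of $\text{lk}_\Delta(v)$ and $\text{del}_\Delta(v)$ viewed as complexes on $[n] \setminus \{v\}$. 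By the inductive hypothesis these contractions are geometrically vertex decomposable, and combined with unmixedness of $I_\Delta$, this gives that $I_\Delta$ is geometrically vertex decomposable. The ``$\Rightarrow$'' direction runs the same argument in reverse: a variable $x_v$ witnessing case (2) of Definition \ref{def:geometricallyVertexDec} yields, via the same identifications, vertex decompositions of $\text{lk}_\Delta(v)$ and $\text{del}_\Delta(v)$ from the inductive hypothesis applied to their now-geometrically-vertex-decomposable Stanley--Reisner ideals, and purity of $\Delta$ follows from unmixedness of $I_\Delta$.

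The main bookkeeping concern I anticipate lies in identifying the contractions of $C_{x_v, I_\Delta}$ and $N_{x_v, I_\Delta}$ with $I_{\text{lk}_\Delta(v)}$ and $I_{\text{del}_\Delta(v)}$ on the reduced vertex set $[n]\setminus\{v\}$, which requires carefully tracking which minimal non-faces of $\Delta$ contain $v$ (contributing generators $q_i$ with $d_i = 1$) versus which do not (contributing $q_i$ with $d_i = 0$), and treating the edge case in which $\{v\} \notin \Delta$ so that the geometric vertex decomposition at $x_v$ is degenerate with $C_{x_v, I_\Delta} = \langle 1 \rangle$. Once these identifications are cleanly in place, the induction itself is routine.
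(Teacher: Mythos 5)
Your argument is correct and follows the route the authors clearly intend (and which appears in a commented-out one-directional version in the source): induct using Remark \ref{rem:GVDsimplicial} to match the geometric vertex decomposition at $x_v$ with the star/deletion decomposition, identify the contractions of $C_{x_v,I_\Delta}$ and $N_{x_v,I_\Delta}$ via Lemma \ref{lem:starDelLink} with $I_{\text{lk}_\Delta(v)}$ and $I_{\text{del}_\Delta(v)}$ on $[n]\setminus\{v\}$, and use that $I_\Delta$ is unmixed if and only if $\Delta$ is pure. Your recognition that the $\{v\}\notin\Delta$ case (where $C_{x_v,I_\Delta}=\langle 1\rangle$) needs separate bookkeeping in the forward direction is exactly the right caution, and the rest of the reduction is routine as you say.
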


In the remainder of this section, we discuss some properties of geometrically vertex decomposable ideals and further connections to vertex decomposable simplicial complexes.

\begin{proposition}\label{prop:radical}
A geometrically vertex decomposable ideal is radical.
\end{proposition}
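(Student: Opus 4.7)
The plan is induction on the recursive structure of Definition \ref{def:geometricallyVertexDec}. For the base case, $\langle 1 \rangle$ is trivially radical, and if $I$ is generated by indeterminates then $R/I$ is a polynomial ring in the remaining indeterminates, hence a domain, so $I$ is radical.

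For the inductive step we are in case (2), with a geometric vertex decomposition $\text{in}_y I = C_{y,I} \cap (N_{y,I} + \langle y \rangle)$ whose ideals $C_{y,I}$ and $N_{y,I}$, contracted to $\kappa[x_1,\dots,\widehat{y},\dots,x_n]$, are geometrically vertex decomposable and hence radical by the inductive hypothesis. Since the generating sets of $C_{y,I}$ and $N_{y,I}$ given in Definition \ref{def:gvdKMS} do not involve $y$, extending back to $R$ preserves radicality, and then $R/(N_{y,I}+\langle y\rangle)$ is isomorphic to the reduced quotient of $\kappa[x_1,\dots,\widehat{y},\dots,x_n]$ by the contraction of $N_{y,I}$. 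The intersection of two radical ideals is radical, so $\text{in}_y I$ is radical.

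The main step is to infer radicality of $I$ from radicality of $\text{in}_y I$ via a flat Gr\"obner degeneration argument in the spirit of the proof of Lemma \ref{lem:height}. By Lemma \ref{lem:form} take the reduced Gr\"obner basis of $I$ with respect to a $y$-compatible term order in the form $\{yq_1+r_1,\dots,yq_k+r_k,h_1,\dots,h_\ell\}$, and form
\[
\tilde{I} \;=\; \langle\, yq_1+tr_1,\,\dots,\,yq_k+tr_k,\,h_1,\,\dots,\,h_\ell\,\rangle \;\subseteq\; R[t].
\]
By \cite[Theorem 15.17]{Eis95}, $B := R[t]/\tilde{I}$ is flat over $\kappa[t]$ with $B[t^{-1}] \cong (R/I)[t,t^{-1}]$ and $B/tB \cong R/\text{in}_y I$. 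Grade $R[t]$ by $\deg(t)=\deg(y)=1$ and $\deg(x_i)=0$ for $x_i\neq y$; each generator of $\tilde{I}$ is then homogeneous, so $B$ is a nonnegatively graded ring with $t$ in positive degree.

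To finish, the goal is to show $B$ is reduced. By flatness, $t$ is a nonzerodivisor on $B$. Suppose $b\in B$ satisfies $b^n = 0$; its image in $B/tB \cong R/\text{in}_y I$ is nilpotent, hence zero by the radicality of $\text{in}_y I$, so $b = tb'$ for some $b'\in B$. Cancelling $t^n$ from $t^n(b')^n = b^n = 0$ yields $(b')^n = 0$, and iterating shows $b \in \bigcap_{n\geq 0} t^n B$. The positive grading on $B$ with $t$ of positive degree forces this intersection to be $0$, so $b = 0$. Thus $B$ is reduced, and the inclusion $B \hookrightarrow B[t^{-1}] \cong (R/I)[t,t^{-1}]$ transfers reducedness to $R/I$. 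I expect the main obstacle to be cleanly verifying the grading so that the descending chain $b \in \bigcap t^n B$ argument terminates; once that is in hand, the rest of the argument is formal.
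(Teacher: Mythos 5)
Your proposal is correct and follows essentially the same route as the paper: induct on the recursive structure of the definition, observe that $C_{y,I}$ and $N_{y,I}$ are radical by induction so that $\text{in}_y I = C_{y,I}\cap(N_{y,I}+\langle y\rangle)$ is an intersection of radical ideals hence radical, and then pass from radicality of $\text{in}_y I$ to radicality of $I$. The paper states this last step as a known consequence of Gr\"obner degeneration without detail; you supply a complete argument via the flat $\kappa[t]$-family $B = R[t]/\tilde I$, grading so that $t,y$ have degree one and using $\bigcap_n t^n B = 0$ to conclude $B$ is reduced, which is a perfectly valid instantiation of the standard fact. One small wording slip: the inclusion $B\hookrightarrow B[t^{-1}]$ does not by itself ``transfer reducedness'' to the larger ring; what you actually use is that a localization of a reduced Noetherian ring is reduced, giving $B[t^{-1}]\cong (R/I)[t,t^{-1}]$ reduced, and then $R/I$ is reduced as a subring of a reduced ring.
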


\begin{proof}
Let $I \subseteq R$ be a geometrically vertex decomposable ideal.  We proceed by induction on $n = \dim(R)$.  We note first that if $I = \langle 0\rangle$, $I = \langle 1\rangle$, or $I$ is generated by indeterminates, the result is immediate.  
Otherwise, there exists some variable $y = x_j$ such that $\text{in}_y I = C_{y,I}  \cap (N_{y,I}+\langle y\rangle)$ is a geometric vertex decomposition and the contractions of  $C_{y,I}$ and $N_{y,I}$ to $\kappa[x_1,\dots, \widehat{y},\dots, x_n]$ are geometrically vertex decomposable. These contracted ideals are radical by induction, thus so are $C_{y,I}$ and $N_{y,I}$. Hence, $\text{in}_y I = C_{y,I}  \cap (N_{y,I}+\langle y\rangle)$ is radical. Finally, because $\text{in}_y I$ is radical, $I$ must also be radical.
\end{proof}

We remark briefly that Proposition \ref{prop:radical} does not require the unmixedness assumptions on $I$, $C_{y,I}$, or $N_{y,I}$. 

We next consider geometrically vertex decomposable ideals that have a certain compatibility with a given lexicographic monomial order. The main result in our discussion of these ideals is Proposition \ref{prop:simplicial}, which we will need in Section \ref{sect:applications} on applications.  
\begin{definition} 
Fix a lexicographic monomial order $<$ on $R$. 
We say that an ideal $I\subseteq R$ is \textbf{$<$-compatibly geometrically vertex decomposable} if $I$ satisfies Definition \ref{def:geometricallyVertexDec} upon replacing item (2) with
\begin{enumerate}
\item[(2*)] for the $<$-largest variable $y$ in $R$, $\text{in}_{y} I = C_{y,I} \cap (N_{y,I}+\langle y\rangle)$ is a  
geometric vertex decomposition and the contractions of $N_{y,I}$ and $C_{y,I}$ to $\kappa[x_1,\dots,\widehat{y},\dots, x_n]$ are $<$-compatibly geometrically vertex decomposable  for the naturally induced monomial order on $\kappa[x_1,\dots, \widehat{y},\dots, x_n]$ (which we also call $<$).\qedhere
\end{enumerate}
\end{definition}

Let $\Delta$ be a simplicial complex on a vertex set $[n]$,  
and let $<$ be a total order on $[n]$. 
We say that a simplicial complex $\Delta$ is \textbf{$<$-compatibly vertex decomposable} if either $\Delta = \emptyset$ or $\Delta$ is a simplex or, for the $<$-largest vertex $v\in \Delta$, $\text{del}_{\Delta}(v)$ and $\text{lk}_{\Delta}(v)$ are $<$-compatibly vertex decomposable.

The following is an easy consequence of \cite[Theorem 2.1]{KMY09}. 
\begin{lemma}\label{lem:compatible}
Suppose that $I\subseteq R$ 
is squarefree in $y = x_j$, and suppose that $<$ is a $y$-compatible monomial order on $R$. Then $\text{in}_< I = \text{in}_< C_{y,I}\cap (\text{in}_< N_{y, I}+\langle y\rangle)$. 
\end{lemma}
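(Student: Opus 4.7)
The plan is to reduce the statement to a direct monomial computation, using the Gröbner basis structure afforded by Lemma \ref{lem:form} and \cite[Theorem 2.1]{KMY09}.

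First, I would use Lemma \ref{lem:form} to fix a reduced Gröbner basis of $I$ with respect to $<$ of the form $\{yq_1+r_1,\ldots, yq_k+r_k, h_1,\ldots, h_\ell\}$, where $y$ divides no term of any $q_i$, $r_i$, or $h_j$. By the definition of $C_{y,I}$ and $N_{y,I}$, the natural generating sets $\{q_1,\ldots, q_k, h_1,\ldots, h_\ell\}$ and $\{h_1,\ldots, h_\ell\}$ arise respectively; and \cite[Theorem 2.1]{KMY09} guarantees that these are in fact Gröbner bases of $C_{y,I}$ and $N_{y,I}$ with respect to $<$. Since $<$ is $y$-compatible, $\mathrm{in}_<(yq_i+r_i) = y\cdot \mathrm{in}_<(q_i)$, so we obtain the explicit descriptions
\[
\mathrm{in}_< I = \langle y\,\mathrm{in}_<(q_i),\ \mathrm{in}_<(h_j)\rangle,\quad \mathrm{in}_< C_{y,I} = \langle \mathrm{in}_<(q_i),\ \mathrm{in}_<(h_j)\rangle,\quad \mathrm{in}_< N_{y,I} = \langle \mathrm{in}_<(h_j)\rangle.
\]

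Second, with all three of these ideals being monomial, I would verify the claimed equality by checking the two inclusions on monomials. For $\subseteq$, each generator $y\,\mathrm{in}_<(q_i)$ of $\mathrm{in}_< I$ lies in $\mathrm{in}_< C_{y,I}$ (since $\mathrm{in}_<(q_i)$ does) and in $\mathrm{in}_< N_{y,I}+\langle y\rangle$ (via the summand $\langle y\rangle$), while each $\mathrm{in}_<(h_j)$ lies in both ideals directly. For $\supseteq$, take a monomial $m$ in the intersection. If $m$ is divisible by some $\mathrm{in}_<(h_j)$ then $m\in \mathrm{in}_< N_{y,I}\subseteq \mathrm{in}_< I$ and we are done; otherwise membership in $\mathrm{in}_< C_{y,I}$ forces some $\mathrm{in}_<(q_i)\mid m$, and membership in $\mathrm{in}_< N_{y,I}+\langle y\rangle$ then forces $y\mid m$. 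Because $y$ divides no term of $q_i$, we have $\gcd(y,\mathrm{in}_<(q_i))=1$, so $y\,\mathrm{in}_<(q_i)\mid m$ and hence $m\in \mathrm{in}_< I$.

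The main obstacle is essentially invoking the correct form of \cite[Theorem 2.1]{KMY09}, namely that the natural generating sets of $C_{y,I}$ and $N_{y,I}$ inherited from a Gröbner basis of $I$ are themselves Gröbner bases; once this is in hand, the remainder of the argument is bookkeeping on monomial divisibility, made transparent by the fact that $y$ is coprime to each $\mathrm{in}_<(q_i)$ and to each $\mathrm{in}_<(h_j)$.
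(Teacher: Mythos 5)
Your proof is correct and follows the paper's approach essentially verbatim: obtain a Gr\"obner basis of $I$ of the stated $y$-squarefree form, invoke \cite[Theorem~2.1(a)]{KMY09} to see that the induced generating sets of $C_{y,I}$ and $N_{y,I}$ are Gr\"obner bases, and then verify the monomial-ideal identity (which the paper calls ``straightforward to check'' and which your divisibility argument carries out explicitly). One small citation nit: Lemma~\ref{lem:form} has as hypothesis that $I$ possesses a geometric vertex decomposition, which is not assumed in Lemma~\ref{lem:compatible}; the fact you actually need---that squarefree-in-$y$ already yields a Gr\"obner basis of the required form under any $y$-compatible order---is recorded in the paragraph immediately preceding Lemma~\ref{lem:form} rather than in that lemma itself.
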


\begin{proof}

Since $I$ is squarefree in $y$, $I$ has a Gr\"obner basis $\{yq_1+r_1,\dots, yq_k+r_k,h_1,\dots, h_\ell\}$ where $y$ does not divide any term of $q_i$, $r_i$, $1 \leq i \leq k$, nor $h_j$, $1 \leq j \leq \ell$. 
 Let $m_i = \text{in}_<q_i$, $1\leq i\leq k$, and $m_{k+i} = \text{in}_< h_{i}$, $1\leq i\leq \ell$. 
By \cite[Theorem 2.1(a)]{KMY09}, $\{q_1,\dots, q_{k}, h_1,\dots, h_{\ell}\}$ and $\{h_1,\dots, h_\ell\}$ are Gr\"obner bases for $C_{y,I}$ and $N_{y,I}$ respectively, so $\text{in}_< C_{y,I} = \langle m_i\mid 1\leq i\leq {k+\ell}\rangle$ and $\text{in}_< N_{y,I} = \langle m_{k+i}\mid 1\leq i\leq \ell\rangle$. It is then straightforward to check 

\begin{equation}\label{eq:initialDecomp}
\text{in}_< I = \langle ym_1,\dots, ym_k, m_{k+1},\dots, m_{k+\ell}\rangle = \text{in}_< C_{y,I}\cap (\text{in}_< N_{y,I}+\langle y\rangle). \qedhere
\end{equation}
\end{proof}

We are now ready to prove the main result of our discussion on $<$-compatibly geometrically vertex decomposable ideals.  

\begin{proposition}\label{prop:simplicial}

An ideal $I\subseteq R$ is $<$-compatibly geometrically vertex decomposable for the lexicographic monomial order $x_1>x_2>\cdots>x_n$ if and only if $\text{in}_< I$ is the Stanley--Reisner ideal of a $<$-compatibly vertex decomposable simplicial complex on $[n]$ for the vertex order $1>2>\cdots>n$. 
\end{proposition}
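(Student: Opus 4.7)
The plan is to induct on $n = \dim R$, proving both directions simultaneously. Throughout, set $y = x_1$ and $R' = \kappa[x_2, \ldots, x_n]$. The key technical ingredient is Lemma \ref{lem:compatible}, which gives $\text{in}_< I = \text{in}_< C_{y, I} \cap (\text{in}_< N_{y, I} + \langle y \rangle)$ whenever $I$ is squarefree in $y$. I combine this with the dictionary between squarefree monomial ideals and simplicial complexes: under this dictionary, extending an $R'$-ideal to $R$ corresponds to forming the cone from the vertex $1$ over the associated complex, and further adjoining $\langle y \rangle$ removes $1$ from the vertex set. The squarefree-in-$y$ hypothesis holds in both directions --- in the forward direction by Lemma \ref{lem:form}, and in the backward direction because $\text{in}_< I$ is a squarefree monomial ideal and $y$ is lex-largest.

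For the forward direction, I apply the inductive hypothesis to the contractions $\bar{C}_{y, I}$ and $\bar{N}_{y, I}$ to obtain $<$-compatibly vertex decomposable complexes $\Delta_C, \Delta_N$ on $\{2, \ldots, n\}$ with Stanley--Reisner ideals (in $R'$) equal to $\text{in}_< \bar{C}_{y, I}$ and $\text{in}_< \bar{N}_{y, I}$. Lemma \ref{lem:compatible} then yields $\text{in}_< I = I_\Delta$, where $\Delta$ is the union of the cone from $1$ on $\Delta_C$ with $\Delta_N$. Since $\bar{N}_{y, I} \subseteq \bar{C}_{y, I}$ forces $\Delta_C \subseteq \Delta_N$, I check directly that $\text{lk}_\Delta(1) = \Delta_C$ and $\text{del}_\Delta(1) = \Delta_N$, both $<$-compatibly vertex decomposable. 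Purity of $\Delta$ reduces to $\dim \Delta_N = \dim \Delta_C + 1$, which follows from the height identity in Lemma \ref{lem:height} in the nondegenerate case; the two degenerate cases ($C_{y, I} = \langle 1 \rangle$ or $\sqrt{C_{y, I}} = \sqrt{N_{y, I}}$) collapse $\Delta$ to $\Delta_N$ or to the cone from $1$ on $\Delta_N$, respectively, and are trivially pure.

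For the backward direction, I split on the $<$-largest vertex $v$ of $\Delta$ (with $\Delta$ a simplex handled separately as an analogous base-type case). If $v = 1$, Lemma \ref{lem:form} writes the reduced Gröbner basis of $I$ in the form $\{x_1 q_i + r_i\} \cup \{h_j\}$ with $x_1$ absent from the $q_i, r_i, h_j$, and the contractions $\bar{C}_{y, I}, \bar{N}_{y, I}$ have lex-initial ideals (in $R'$) equal to the Stanley--Reisner ideals of $\text{lk}_\Delta(1), \text{del}_\Delta(1)$, so induction applies. If $v > 1$ (so $x_1 \in I_\Delta$), reducedness together with the lex order forces the Gröbner basis to contain a unique element $x_1 + h$ with $h \in R'$ and all remaining elements in $R'$; hence $C_{y, I} = \langle 1 \rangle$ and $\bar{N}_{y, I}$ is handled by induction. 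The main technical obstacle here is the geometric vertex decomposition identity $\text{in}_y I = C_{y, I} \cap (N_{y, I} + \langle y \rangle)$ itself: this is not immediate from Lemma \ref{lem:compatible}, but the inclusion $\subseteq$ is always valid, and Lemma \ref{lem:compatible} shows both sides have the same lex-initial ideal $\text{in}_< I$, which forces equality because two nested ideals with equal initial ideals must coincide. Finally, unmixedness (required by the GVD definition) propagates through the induction via the standard fact that vertex decomposability implies Cohen--Macaulayness of the Stanley--Reisner ring, and Cohen--Macaulayness descends from $R/\text{in}_< I$ to $R/I$.
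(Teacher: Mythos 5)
Your argument is correct and follows essentially the same inductive route as the paper, built around Lemma \ref{lem:compatible} with a case split that is equivalent to the paper's split on whether $C_{y,I}=\langle 1\rangle$. The only substantive departures are that in the backward direction you derive the decomposition identity $\text{in}_y I = C_{y,I}\cap(N_{y,I}+\langle y\rangle)$ from the automatic containment $\subseteq$ plus equality of lex-initial ideals, whereas the paper simply cites \cite[Theorem 2.1(b)]{KMY09}, and that you spell out the purity and unmixedness bookkeeping (via Lemma \ref{lem:height} and the Cohen--Macaulayness chain) that the paper leaves implicit --- both are fine.
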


\begin{proof}

If $I = \langle 1\rangle$ or $\langle 0\rangle$, there is nothing to show. So suppose that $I$ is nontrivial and proceed by induction on $n = \text{dim}(R)$. The base case $n = 1$ is straightforward. 

Suppose that $n\geq 2$ is arbitrary. First assume that $I$ is $<$-compatibly geometrically vertex decomposable and let $y = x_1$. 
If $I$ is generated by indeterminates, there is nothing to show. Otherwise, we have that $\text{in}_{y} I = C_{y, I}\cap (N_{y, I}+\langle y\rangle)$ is a geometric vertex decomposition, and the contractions $N^c$ and $C^c$ of $N_{y, I}$ and $C_{y, I}$ to $\kappa[x_2,\dots, x_n]$ are $<$-compatibly geometrically vertex decomposable. 
There are two cases.

The first case is when $C_{y,I} = \langle 1 \rangle$, which implies that $\text{in}_< I = \text{in}_<N_{y,I}+\langle y\rangle$. By induction, $\text{in}_<N^c$ is the Stanley--Reisner ideal of a $<$-compatibly vertex decomposable simplicial complex. 
Thus, $\text{in}_<I$, which is equal to $\text{in}_<N_{y,I}+\langle y\rangle$, is too. Indeed, the complexes $\Delta(\text{in}_<N^c)$ and $\Delta(\text{in}_<N_{y,I}+\langle y\rangle)$ are the same (though on different ambient vertex sets).

 Now assume $C_{y, I} \neq \langle 1 \rangle$.  By induction, $\text{in}_< N^c$ and $\text{in}_< C^c$ are the Stanley--Reisner ideals of $<$-compatibly vertex decomposable simplicial complexes, thus so are $\text{in}_< N_{y, I}+\langle y\rangle$ and $\text{in}_< C_{y, I}+ \langle y\rangle$. By Lemma \ref{lem:compatible}, we have
\begin{equation}\label{eq:squarefreeDecompCN}
\text{in}_< I = \text{in}_<C_{y,I}\cap(\text{in}_<N_{y,I}+\langle y\rangle). 
\end{equation}
Thus, $\text{in}_< I$ is a squarefree monomial ideal. 
Let $\Delta := \Delta(\text{in}_<I)$. 
Equation \eqref{eq:initialDecomp} and Lemma \ref{lem:starDelLink} imply that $\text{in}_<C_{y,I}$ and $\text{in}_<N_{y,I}+\langle y\rangle$ are the Stanley--Reisner ideals of $\text{star}_\Delta(1)$ and $\text{del}_\Delta(1)$.
Thus $\text{in}_<C_{y,I}+\langle y\rangle$ and $\text{in}_<N_{y,I}+\langle y\rangle$ are the Stanley--Reisner ideals of $\text{lk}_\Delta(1)$ and $\text{del}_\Delta(1)$. 
 Hence $\text{lk}_\Delta(1)$ and $\text{del}_\Delta(1)$ are $<$-compatibly vertex decomposable. Thus, $\Delta$ is too.

For the converse, assume that $\Delta = \Delta(\text{in}_< I)$ is $<$-compatibly vertex decomposable. 

Since $y = x_1$ is $<$-largest, and $\text{in}_< I$ is a squarefree monomial ideal by assumption, the reduced Gr\"obner basis of $I$ with respect to $<$ is squarefree in $y$, and so has the form $\mathcal{G} = \{yq_1+r_1,\dots, yq_k+r_k,h_1,\dots, h_\ell\}$, with  $y$ not dividing any term of any $q_i, r_i, h_j$, $1\leq i\leq k$, $1\leq j\leq \ell$. So, by \cite[Theorem 2.1 (b)]{KMY09}, $\text{in}_{y}I = C_{y,I}\cap (N_{y,I}+\langle y\rangle)$, is a geometric vertex decomposition. Again, we have two cases.

If $C_{y,I} = \langle 1 \rangle$, then $\text{in}_<I = \text{in}_< N_{y,I}+\langle y\rangle$. The complex $\Delta(N_{y,I}+\langle y\rangle)$ is the same as $\Delta(N^c)$. Thus, $\Delta(N^c)$ is $<$-compatibly vertex decomposable. So, by induction, $N^c$ is $<$-compatibly geometrically vertex decomposable. Hence, so too is $I$.

 If $C_{y,I} \neq \langle 1 \rangle$, then we have that $1$ is a vertex of $\Delta$ and, as discussed above, $\text{in}_<C_{y,I}+\langle y\rangle$ and $\text{in}_<N_{y,I}+\langle y\rangle$ are the Stanley--Reisner ideals of $\text{lk}_\Delta(1)$ and $\text{del}_\Delta(1)$. 
Since $\Delta$ is $<$-compatibly vertex decomposable, so are $\text{lk}_\Delta(1)$ and $\text{del}_\Delta(1)$. Thinking of these complexes as complexes on vertex set $\{2,3,\dots, n\}$, their Stanley--Reisner ideals are $\text{in}_<C^c$ and $\text{in}_<N^c$. So, by induction, $C^c$ and $N^c$ are $<$-compatibly geometrically vertex decomposable. Hence, so too is $I$.
\end{proof}

We end this section with an example that shows that there exist geometrically vertex decomposable ideals that are \emph{not} geometrically vertex decomposable compatible with any lexicographic monomial order. 

\begin{example}\label{ex:nolex}
Let $I = \langle y(zs-x^2), ywr, wr(z^2+zx+wr+s^2) \rangle \subseteq \kappa[x,y,z,w,r,s]$. 
Observe that $I$ is squarefree in $y$, and we have a geometric vertex decomposition with $C_{y,I} = \langle zs-x^2,wr \rangle$ and $N_{y,I} = \langle (wr)(zx+s^2+z^2+wr) \rangle$. 
Furthermore, the contractions of $C_{y,I}$ and $N_{y,I}$ to $\kappa[x,z,w,r,s]$ are geometrically vertex decomposable. (To see this, let $C^c$ and $N^c$ denote these contracted ideals. Then $C^c$ and $N^c$ are squarefree in $s$ and $x$, respectively, and $\text{in}_s C^c = \langle zs, wr\rangle$ and $\text{in}_x N^c = \langle wrzx\rangle$.) Hence $I$ is geometrically vertex decomposable. 

Next, we will observe that $I$ has no squarefree initial ideals, hence cannot be $<$-compatibly geometrically vertex decomposable for any order $<$ by Proposition \ref{prop:simplicial}. To prove this, we first note that the given generating set $\{g_1 := y(zs-x^2), g_2 := ywr, g_3 := wr(z^2+zx+wr+s^2)\}$ of $I$ is a universal Gr\"obner basis. Indeed, fix an arbitrary monomial order, and observe that each $S$-polynomial $S(g_i,g_j)$, $i\neq j$, is divisible by $g_2 = ywr$ and thus reduces to $0$ under division by $\{g_1,g_2,g_3\}$. Consequently, if $I$ has a squarefree initial ideal, there exists a monomial order $<$ such that $\langle \text{in}_< g_1, \text{in}_< g_2, \text{in}_< g_3\rangle$ is a squarefree monomial ideal. Noting that none of the monomials of $g_i$ are divisible by any of the monomials in $g_j$, it follows that $\text{in}_< g_1, \text{in}_<g_2, \text{in}_< g_3$ are minimal generators for $\langle \text{in}_< g_1, \text{in}_<g_2, \text{in}_< g_3\rangle$. Hence, the only way for $\langle \text{in}_< g_1, \text{in}_<g_2, \text{in}_< g_3\rangle$ to be a squarefree monomial ideal is if each of $ \text{in}_< g_1, \text{in}_<g_2, \text{in}_< g_3$ is a squarefree monomial, which would force (i) $\text{in}_< (zs-x^2) = zs$ and (ii) $\text{in}_< (z^2+zx+wr+s^2) = zx$. So, suppose there is some monomial order $<$ that satisfies (i) and (ii). Then we have $zx > z^2$ (by (ii)) and hence $x>z$. We also have $zs>x^2$ (by (i)). So, since $x>z$, we have $zs>x^2>zx$ and so $s>x$. Finally, $zx>s^2$ (by (ii)) together with $s>x$ implies that $zx>s^2>x^2$. Hence $z>x$, which is impossible as we have already concluded that $x>z$. Thus no monomial order $<$ that satisfies both (i) and (ii) exists, and there is no squarefree initial ideal of $I$.
\end{example}

\section{Background on Liaison}\label{sect:liaison}
In this section, we recall some background on liaison theory. The first subsection concerns terminology and results relevant to our work. The second subsection provides further context for our second goal from the introduction. 

\subsection{Liaison theory basics}
Here we review standard definitions and lemmas on Gorenstein liaison theory that we will need in this paper. For a more thorough introduction, see \cite{MN01}.  
We follow definitions and some notation from \cite{GMN13}, which provides a careful discussion of how liaison theory can be used to make inferences about Gr\"obner bases. Throughout this subsection, we let $R = \kappa[x_0, x_1,\dots, x_n]$ with the standard grading.

\begin{definition}
Let $V_1, V_2, X \subseteq \mathbb{P}^{n}$ be subschemes defined by saturated ideals $I_{V_1}$, $I_{V_2}$, and $I_{X}$ of $R$, respectively, and assume that $X$ is arithmetically Gorenstein.  If $I_X \subseteq I_{V_1} \cap I_{V_2}$ and if $[I_X:I_{V_1}] = I_{V_2}$ and $[I_X: I_{V_2}] = I_{V_1}$, then $V_1$ and $V_2$ are \textbf{directly algebraically $G$-linked} by $X$, and we write $I_{V_1} \sim I_{V_2}$.
\end{definition}

One may generate an equivalence relation using these direct links.

\begin{definition} If there is a sequence of links $V_1 \sim \cdots \sim V_k$ for some $k \geq 2$, then we say that $V_1$ and $V_k$ are in the same \textbf{$G$-liaison class (or Gorenstein liaison class)} and that they are \textbf{$G$-linked} in $k-1$ steps.  Of particular interest is the case in which $V_k$ is a complete intersection, in which case we say that $V_1$ is \textbf{in the Gorenstein liaison class of a complete intersection (abbreviated glicci)}.
\end{definition}

We will say that a homogeneous, saturated, unmixed ideal of $R$ is glicci if it defines a glicci subscheme of $\mathbb{P}^{n}$.  It is because liaison was developed to study subschemes of projective space that the restriction to homogeneous, saturated ideals is natural.  Throughout this paper, we will be interested in $G$-links coming from \emph{elementary $G$-biliaisons}. Indeed, it is through elementary $G$-biliaisons that we connect geometric vertex decomposition to liaison theory. 

Let $S$ be a ring.  If $S_P$ is Gorenstein for all prime ideals $P$ of height $0$, then we say that $S$ is $\mathbf{G_0}$.  

\begin{definition}\label{def:Gbiliaison}
Let $I$ and $C$ be homogeneous, saturated, unmixed ideals of $R $ with $\hgt(I) = \hgt(C)$.  Suppose there exist $\ell \in \mathbb{Z} $, a homogeneous Cohen--Macaulay ideal $N \subseteq I \cap C$ of height $\hgt(I)-1$, and an isomorphism $I/N \cong [C/N](-\ell)$ as graded $R/N$-modules.  If $N$ is $G_0$, then we say that $I$ is obtained from $C$ by an \textbf{elementary $G$-biliaison of height $\ell$}.  
\end{definition}

\begin{theorem}\cite[Theorem 3.5] {Har07}
Let $I$ and $C$ be homogeneous, saturated, unmixed ideals defining subschemes $V_I$ and $V_C$, respectively, of $\mathbb{P}^{n}$.  If $I$ is obtained from $C$ by an elementary $G$-biliaison, then $V_I$ is $G$-linked to $V_C$ in two steps.
\end{theorem}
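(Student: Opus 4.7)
The plan is to construct an intermediate homogeneous saturated unmixed ideal $J \subseteq R$ and two arithmetically Gorenstein subschemes $X_1, X_2 \subseteq \mathbb{P}^{n}$ realizing direct $G$-links $V_I \sim V_J$ via $X_1$ and $V_J \sim V_C$ via $X_2$. The heart of the argument is to interpret the graded isomorphism $\varphi : I/N \to [C/N](-\ell)$ as multiplication by a homogeneous rational function, by localizing at the total quotient ring of $R/N$.

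Since $N$ is Cohen--Macaulay of height $\hgt(I)-1$ and is $G_0$, the ring $R/N$ has no embedded associated primes and is generically Gorenstein. The ideals $I/N$ and $C/N$ are then height-one ideals in $R/N$, so they have generic rank one and may be identified with graded fractional ideals in the total quotient ring of $R/N$. The isomorphism $\varphi$ is then given by multiplication by a homogeneous element $h$ of degree $\ell$ in this quotient ring. Using the $G_0$ hypothesis to locate homogeneous non-zerodivisors in any sufficiently large degree, I would write $h = b/a$ for homogeneous $a, b \in R$ whose images in $R/N$ are non-zerodivisors and satisfy $\deg b - \deg a = \ell$. Pulling the identity $b \cdot (C/N) = a \cdot (I/N)$ back to $R$ produces the equality of homogeneous ideals
\begin{equation*}
J := aI + N = bC + N,
\end{equation*}
and $J$ is unmixed of height $\hgt(I)$ because $a$ and $b$ are non-zerodivisors modulo $N$.

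I would then realize the two direct $G$-links via a basic double linkage construction. Adjoining to $N$ a suitably generic regular sequence on $R/N$ whose terminal element is $a$ produces a homogeneous arithmetically Gorenstein ideal $X_1$ of height $\hgt(I)$; a symmetric choice involving $b$ produces $X_2$. A standard mapping cone argument, using the presentation $J = aI + N$ and the fact that $a$ is a non-zerodivisor on $R/N$, then yields the link equalities $[X_1 : I] = J$ and $[X_1 : J] = I$; the analogous computation with $b$ gives $[X_2 : C] = J$ and $[X_2 : J] = C$. Combining the two direct $G$-links exhibits the desired chain $V_I \sim V_J \sim V_C$.

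The hard part is ensuring that the augmented ideals $X_1$ and $X_2$ can be arranged to be arithmetically Gorenstein rather than only Cohen--Macaulay. This is a genericity argument that requires $\kappa$ to be sufficiently large and relies essentially on the $G_0$ property of $N$: one chooses the extra forms as generic linear combinations in the appropriate graded pieces so that they simultaneously avoid the minimal primes of $N$ and cut out a Gorenstein quotient. Modulo this technical point, which is handled in detail in \cite{Har07} via Hartshorne's theory of generalized divisors on generically Gorenstein schemes, the remaining verifications are formal consequences of the definitions of Gorenstein linkage and the two descriptions of $J$ as $aI + N$ and $bC + N$.
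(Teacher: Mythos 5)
First, a point of context: the paper does not prove this statement at all. It is quoted directly from Hartshorne (with the citation \cite[Theorem 3.5]{Har07}) and used as a black box, so there is no ``paper's own proof'' to compare against; your proposal must be judged on its own.

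Your overall shape is the right one and does match the standard argument: interpret the biliaison isomorphism as multiplication by $b/a$ in the total quotient ring of $R/N$, set $J = aI+N = bC+N$, and link $V_I \sim V_J \sim V_C$. However, there is a genuine gap precisely where the hard content of the theorem lives. You propose to take $X_1$ (resp.\ $X_2$) to be $N$ together with a generic regular sequence terminating in $a$ (resp.\ $b$). Since $\hgt(I) = \hgt(N)+1$, that sequence has length one, so $X_1 = N + \langle a \rangle$. This is a hypersurface section of $R/N$, hence Cohen--Macaulay, but it is arithmetically Gorenstein only when $R/N$ is itself Gorenstein. Definition~\ref{def:Gbiliaison} only assumes $R/N$ is $G_0$, and this is exactly the regime in which Gorenstein liaison diverges from complete intersection liaison: one cannot take the linking scheme to be a hypersurface section of $\operatorname{Proj}(R/N)$. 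Instead one must choose $X_1$ and $X_2$ in a suitably twisted anticanonical divisor class on $\operatorname{Proj}(R/N)$, and producing an effective such divisor containing $V_I$ (resp.\ $V_C$) with the correct residual is the substance of Hartshorne's argument via generalized divisors. You acknowledge this and defer it to \cite{Har07}, but calling it a ``technical point'' misrepresents it: with $X_1 = N + \langle a \rangle$ the theorem would reduce to the already-classical complete intersection case. A further consequence: the residual of $V_I$ in the correctly chosen $X_1$ is generally \emph{not} $V(J)$ for $J = aI+N$; the intermediate scheme $W$ of the two-step link lies in a class governed by the anticanonical twist, not by $a$ or $b$, so your identification of the middle ideal also does not survive. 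Lesser issues (the sign of $\ell$ and the direction of $\varphi$, which as written would give $bI+N = aC+N$ rather than $aI+N = bC+N$; and the justification that $J$ is unmixed, which requires the standard basic-double-link exact sequence rather than merely ``$a$, $b$ are non-zerodivisors'') are repairable once the main gap is addressed.
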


\begin{remark}
\textbf{Even $G$-liaison classes} are equivalence classes of subschemes of $\mathbb{P}^{n}$ of a fixed codimension that are $G$-linked to one another in an even number of steps.  
Two subschemes in the same \emph{even} $G$-liaison class are more closely related to one another than are two subschemes that can be linked to one another but only in an odd number of steps (see \cite[Section 3]{Nag98}). 
This provides some intuition for why various classes of generalized determinantal varieties can be linked to one another in an even number of steps, via elementary $G$-biliaisons, yet there is no reason to expect that the intermediate links share a similar form, or are at all easy to describe.
\end{remark}

\subsection{Further context on a question in liaison theory}

The purpose of this subsection is to recall the motivation for the following question:  
\begin{quest}\cite[Question 1.6]{KM+01}\label{motivQuest}
Is every arithmetically Cohen--Macaulay subscheme of $\mathbb{P}^n$ glicci?
\end{quest} 

Because all complete intersections of a fixed codimension are in the same liaison class, an equivalent formulation of the question is 

\begin{quest}
For each codimension, is there exactly one Gorenstein liaison class containing any (or all) Cohen--Macaulay subschemes of $\mathbb{P}^n$?
\end{quest}

 This question arises by analogy to the special case of complete intersection liaison in codimension 2, where all arithmetically Cohen--Macaulay subschemes are in the (complete intersection and so also Gorenstein) liaison class of a complete intersection \cite[Theorem 3.2]{PS74}.  The same is not true in higher codimensions, however.  In fact, in higher codimensions there are infinitely many complete intersection liaison classes containing arithmetically Cohen--Macaulay schemes (see \cite[Chapter 7]{KM+01} and, for related ideas, \cite{HU87}).  Complete intersection liaison is a well-understood and very satisfying theory in codimension 2, and this failure to generalize to higher codimensions suggests that it is worth searching for a theory that reduces to complete intersection liaison in codimension 2 and also preserves many of its desirable properties in higher codimension.  This is one of the motivations for studying Gorenstein liaison, where better control of the liaison classes containing arithmetically Cohen--Macaulay schemes may still be hoped for in all codimensions.  In particular, an affirmative answer to Question \ref{motivQuest} would serve, at least in the eyes of some, as an endorsement of the structure of Gorenstein liaison.

There are partial results in the direction of an affirmative answer to Question \ref{motivQuest}, including the results that standard determinantal schemes \cite[Theorem 1.1]{KM+01}, mixed ladder determinantal schemes from two-sided ladders \cite[Corollary 2.2]{Gor07}, schemes of Pfaffians \cite[Theorem 2.3]{DG09}, wide classes of arithmetically Cohen--Macaulay curves in $\mathbb{P}^4$ \cite{CMR00, CMR01}, and arithmetically Cohen-Macaulay schemes defined by Borel-fixed monomial ideals \cite[Theorem 3.5]{MN02} are all glicci. For more results, see \cite{Cas03, HSS08}.

There have also been some quite general discoveries.  M. Casanellas, E. Drozd, and R. Hartshorne \cite{CDH05} gave a general characterization of when two subschemes of a normal arithmetically Gorenstein scheme are in the same Gorenstein liaison class and showed that every arithmetically Gorenstein subscheme of $\mathbb{P}^n$ is glicci.   In \cite[Theorem 3.1]{Gor08}, E. Gorla obtained the very broad result that every determinantal scheme is glicci, generalizing the results of \cite[Theorem 1.1]{KM+01} and also \cite[Theorem 4.1]{Har07}.  Later, J. Migliore and U. Nagel \cite{MN13} showed that every arithmetically Cohen-Macaulay subscheme of $\mathbb{P}^n$ that is generically Gorenstein is actually glicci when viewed as a subscheme of $\mathbb{P}^{n+1}$.

One can find both encouragement and cause for trepidation in \cite{Har02}: R. Hartshorne gave positive results for many sets of points in $\mathbb{P}^3$ and curves in $\mathbb{P}^4$ but also produced still-viable candidates for a source of a negative answer.  The precision required to study Hartshorne's examples highlights the complexity of Question \ref{motivQuest}. 

By connecting geometric vertex decomposition and liaison, we provide more evidence in favor of an affirmative answer to this question and give a framework for assessing membership in the Gorenstein liaison class of a complete intersection for some arithmetically Cohen--Macaulay schemes arising naturally from combinatorial data.

\section{(Weakly) Geometrically vertex decomposable ideals are glicci}\label{sect:GVDGlicci}

In Section \ref{sect:GVDtoGBiliaison}, we show that under mild hypotheses a geometric vertex decomposition gives rise to an elementary $G$-biliaison of height $1$ (Corollary \ref{cor:gvdToLia}). We use this result in Subsection \ref{sect:GVDtoGlicci} to prove that every geometrically vertex decomposable ideal is glicci (Theorem \ref{thm:glicci}). We also define the class of \emph{weakly geometrically vertex decomposable ideals}, a class that contains the geometrically vertex decomposable ideals, and we prove that each weakly geometrically vertex decomposable ideal is glicci (Corollary \ref{weakGVDimpliesgliggi}). Finally, in Subsection \ref{sect:GBapplications}, we obtain some consequences on Gr\"obner bases and Gr\"obner degenerations. Throughout this section, we assume that the field $\kappa$ is infinite, and we let $R$ denote the standard graded polynomial ring $\kappa[x_1, \ldots, x_n]$. 

\subsection{An elementary $G$-biliaison arising from a geometric vertex decomposition}\label{sect:GVDtoGBiliaison}

We begin by using a geometric vertex decomposition to construct the isomorphism that will constitute an elementary $G$-biliaison when the setting is appropriate.

\begin{theorem}\label{thm:onestep}
Suppose that $I \subseteq R$ is an unmixed ideal possessing a nondegenerate geometric vertex decomposition with respect to some variable $y = x_j$ of $R$.  If $N_{y,I} $ is unmixed, then there is an isomorphism $I/N_{y,I} \cong C_{y,I}/N_{y,I} $ as $R/N_{y,I}$-modules.  If $N_{y,I}$, $C_{y,I}$, and $I$ are homogeneous, then the same map is an isomorphism $I/N_{y,I} \cong [C_{y,I}/N_{y,I} ](-1)$ in the category of graded $R/N_{y,I}$-modules.
\end{theorem}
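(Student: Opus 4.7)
My plan is to construct an explicit $R/N_{y,I}$-module homomorphism
\[
\phi\colon C_{y,I}/N_{y,I}\longrightarrow I/N_{y,I}, \qquad \overline{q_i}\longmapsto \overline{yq_i+r_i}, \quad \overline{h_j}\longmapsto 0,
\]
using the reduced $y$-compatible Gr\"obner basis $\{yq_1+r_1,\ldots,yq_k+r_k,h_1,\ldots,h_\ell\}$ of $I$ supplied by Lemma~\ref{lem:form}, together with the fact from \cite[Theorem~2.1]{KMY09} that $C_{y,I}=\langle q_i,h_j\rangle$ and $N_{y,I}=\langle h_j\rangle$. The key technical input for well-definedness is the identity
\[
I\cap\kappa[x_1,\ldots,\widehat{y},\ldots,x_n]=N_{y,I}^c,
\]
where $N_{y,I}^c$ denotes the contraction; this follows from a direct Gr\"obner reduction, since any $y$-free element of $I$ has $y$-free leading term and thus can only be reduced by the $h_j$'s, producing an expression as a $y$-free combination of the $h_j$'s.

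Well-definedness of $\phi$ then reduces to verifying that, for every syzygy $\sum_i\alpha_iq_i+\sum_j\beta_jh_j=0$, one has $\sum_i\alpha_ir_i\in N_{y,I}$. I would expand $\alpha_i=\sum_d\alpha_{i,d}y^d$ and $\beta_j=\sum_d\beta_{j,d}y^d$ with coefficients in $\kappa[\widehat{y}]$, extract the syzygy in each $y$-degree $d$, and observe that $\sum_i\alpha_{i,d}(yq_i+r_i)=-y\sum_j\beta_{j,d}h_j+\sum_i\alpha_{i,d}r_i$ lies in $I$, so its $y$-free part $\sum_i\alpha_{i,d}r_i$ lies in $I\cap\kappa[\widehat{y}]=N_{y,I}^c$; summing over $d$ gives the claim. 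Surjectivity is immediate, since $I/N_{y,I}$ is generated by the classes $\overline{yq_i+r_i}=\phi(\overline{q_i})$.

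For injectivity I would use a flat deformation. Set $\widetilde I=\langle yq_i+tr_i,h_j\rangle\subseteq R[t]$ and $\widetilde N=N_{y,I}R[t]$; the submodule $(\widetilde I+\widetilde N)/\widetilde N$ of the $\kappa[t]$-free module $(R/N_{y,I})[t]$ is $\kappa[t]$-torsion-free and hence $\kappa[t]$-flat. Its fiber at $t=1$ is $I/N_{y,I}$, and its fiber at $t=0$ is $(\mathrm{in}_yI+N_{y,I})/N_{y,I}=y\cdot C_{y,I}/N_{y,I}$, which is isomorphic to $C_{y,I}/N_{y,I}$ (ungraded) or to $[C_{y,I}/N_{y,I}](-1)$ (graded) via multiplication by $y$, a nonzerodivisor on $R/N_{y,I}$ because $N_{y,I}$ has $y$-free generators. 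In the homogeneous case, $\kappa[t]$-flatness forces the Hilbert functions of the two fibers to agree, so the degree-zero graded surjection $[C_{y,I}/N_{y,I}](-1)\twoheadrightarrow I/N_{y,I}$ induced by $\phi$ must be an isomorphism, yielding the graded statement. In the ungraded case, applying the same construction to the family of maps $\Phi_t\colon (C_{y,I}/N_{y,I})[t]\to(\widetilde I+\widetilde N)/\widetilde N$, whose specialization $\Phi_0$ is the isomorphism ``multiplication by $y$'', gives the module isomorphism $\phi=\Phi_1$ after propagating the vanishing of $\ker\Phi$ across the flat family (using finite generation of $\ker\Phi$ over $(R/N_{y,I})[t]$ and a Nakayama argument at associated primes, where unmixedness of $N_{y,I}$, $C_{y,I}$, and $I$ enters).

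The main obstacle is injectivity. Well-definedness and surjectivity fall out of direct Gr\"obner manipulation, but injectivity of $\phi$ is not visible from the generator-by-generator definition; the deformation-theoretic bridge from multiplication by $y$ at $t=0$ to $\phi$ at $t=1$, together with $\kappa[t]$-flatness of the source and target of $\Phi$, is what supplies the needed rigidity.
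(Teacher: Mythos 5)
Your construction is genuinely different from the paper's. The paper's map is multiplication by the fraction $v/u$, where $u=\sum a_iq_i$ and $v=\sum a_i(yq_i+r_i)$ for a generic choice of scalars $a_i\in\kappa$ (here $\kappa$ infinite and unmixedness of $N_{y,I}$ together give $\mathrm{Min}(N_{y,I})=\mathrm{Ass}(N_{y,I})$, and nondegeneracy, via Lemma~\ref{lem:height}, guarantees neither $\langle q_i\rangle$ nor $\langle yq_i+r_i\rangle$ lies in any minimal prime of $N_{y,I}$, so $u$ and $v$ are non-zero-divisors). Injectivity is then automatic since $v$ is a non-zero-divisor. You instead define $\phi$ on the generators $\overline{q_i}\mapsto\overline{yq_i+r_i}$, prove well-definedness by a $y$-degree decomposition together with the Gr\"obner fact $I\cap\kappa[x_1,\dots,\widehat y,\dots,x_n]\subseteq N_{y,I}$, get surjectivity for free, and handle injectivity by a flat degeneration to $t=0$, where the map is multiplication by $y$. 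The well-definedness and surjectivity parts are correct, and this route has an appealing feature: as written it does not appear to use $\kappa$ infinite, unmixedness of $N_{y,I}$, or nondegeneracy, whereas the paper's construction needs all three.

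There are, however, two gaps in the injectivity step. First, you assert that the fibers of $(\widetilde I+\widetilde N)/\widetilde N$ at $t=1$ and $t=0$ are $I/N_{y,I}$ and $\mathrm{in}_yI/N_{y,I}$; this is not automatic from torsion-freeness of the submodule alone. Tensoring the exact sequence $0\to\widetilde I/\widetilde N\to (R/N_{y,I})[t]\to R[t]/\widetilde I\to 0$ with $\kappa[t]/(t-a)$ produces the desired identifications precisely when $t-a$ is a non-zero-divisor on $R[t]/\widetilde I$, i.e.\ when $R[t]/\widetilde I$ is $\kappa[t]$-flat, which is the content of \cite[Theorem~15.17]{Eis95} (invoked by the paper for a closely related purpose in Lemma~\ref{lem:height}) and must be cited; without it the surjection $(\widetilde I/\widetilde N)\otimes\kappa[t]/(t-1)\twoheadrightarrow I/N_{y,I}$ could have a kernel and the Hilbert-function comparison would not close. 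Second, the ungraded argument (``Nakayama at associated primes'') does not work as stated, since $t$ is not in the Jacobson radical of $(R/N_{y,I})[t]$, and it is unclear what finite-length localization is intended; a clean replacement is available: with $R[t]/\widetilde I$ flat over $\kappa[t]$ one gets $K/tK=0$ for $K=\ker\Phi$, so $K=tK$, and since $K\subseteq(C_{y,I}/N_{y,I})[t]$ one has $K\subseteq\bigcap_{m\ge 0}t^m(C_{y,I}/N_{y,I})[t]=0$ directly, with no appeal to unmixedness.
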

\begin{proof}
Fix a $y$-compatible term order $<$.  From Lemma \ref{lem:form}, we know that the reduced Gr\"obner basis of $I$ has the form $\mathcal{G} = \{yq_1+r_1,\dots, yq_k+r_k,h_1,\dots, h_\ell\}$ where $y$ does not divide any term of $q_i$ or of $r_i$ for any $1 \leq i \leq k$ nor any $h_j$ for $1 \leq j \leq \ell$.   Let $C = C_{y,I} = \langle q_1,\dots, q_k, h_1,\dots, h_\ell\rangle$ and $N = N_{y,I} = \langle h_1,\dots, h_\ell \rangle$. 

We first observe that $N\subseteq I \cap C$.  To build the desired isomorphism, we will need to find regular elements of $R/N$.  Towards that end, we claim that $\langle q_1, \ldots, q_k \rangle \not\subseteq Q$ for any minimal prime $Q$ of $N$.  If it were, then we would also have $C \subseteq Q$, which is impossible because $\hgt(Q) = \hgt(N)<\hgt(C)$ by Lemma \ref{lem:height}.  Similarly, $\langle yq_1+r_1, \ldots, yq_k+r_k \rangle \not\subseteq Q'$ for any minimal prime $Q'$ of $N$ since, if it were, then we would have $I \subseteq Q'$, in violation of Lemma \ref{lem:height}.  Because $\kappa$ is infinite, we may choose scalars $a_1, \ldots, a_k \in \kappa$ so that neither $u:=a_1q_1+\cdots+a_kq_k$ nor $v:=a_1(yq_1+r_1)+ \ldots+ a_k(yq_k+r_k)$ is an element of any minimal prime of $N$.  Because $\min(N) = \ass(N)$, neither $u$ nor $v$ is a zero-divisor on $R/N$.  

We may now define a map $\phi: C \rightarrow I/N$ given by $f \mapsto \dfrac{fv}{u}$.  To see that $\phi$ is well defined, we claim that, for each $f \in C$, there exists a unique $\overline{g} \in I/N$ so that $fv-gu \in N$ (where $\overline{g}$ is the class of $g$ in $I/N$).  Suppose that $fv-g_1u = n_1 \in N$ and $fv-g_2u = n_2 \in N$ for some $g_1, g_2 \in I$.  Then $(g_1-g_2)u = n_2-n_1 \in N$, and so, because $u$ is not a zero-divisor on $I/N$, $g_1-g_2 \in N$.  Hence, there is at most one such $\overline{g} \in I/N$.  To see that there is at least one such $\overline{g} \in I/N$, we note that $\overline{g} = \overline{0}$ is a satisfying choice if $f \in N$ and claim that $\overline{g} = \overline{yq_i+r_i}$ is a satisfying choice if $f = q_i$ for each $1 \leq i \leq k$.  Indeed, \begin{align*}
(yq_i+r_i)u-q_iv = &(yq_i+r_i)(a_1q_1+\cdots+a_kq_k) - q_i(a_1(yq_1+r_1)+\cdots+a_k(yq_k+r_k)) \\
= &r_i(a_1q_1+\cdots+a_kq_k)-q_i(a_1r_1+\cdots+a_kr_k).
\end{align*}
Because $yq_i+r_i \in I$ and $v \in I$, we have $r_i(a_1q_1+\cdots+a_kq_k)-q_i(a_1r_1+\cdots+a_kr_k) \in I$.  But $y$ does not divide any term of any $q_j$ or any $r_j$ for $1 \leq j \leq k$, and so the leading term of $r_i(a_1q_1+\cdots+a_kq_k)-q_i(a_1r_1+\cdots+a_kr_k)$ is not divisible by the leading term of any $yq_i+r_i$.  By the assumptions that $\mathcal{G}$ is a Gr\"obner basis of $I$ and that $<$ is $y$-compatible, it must be that $r_i(a_1q_1+\cdots+a_kq_k)-q_i(a_1r_1+\cdots+a_kr_k)$ has a Gr\"obner basis reduction using only the elements of $\mathcal{G}$ not involving $y$, i.e., using only $h_1,\dots, h_\ell $, which implies that $r_i(a_1q_1+\cdots+a_kq_k)-q_i(a_1r_1+\cdots+a_kr_k) \in N$.  Hence, multiplication by $v$ gives a map from $C$ to $u(I/N)$, which maps isomorphically to $I/N$ by multiplication by $1/u$.  That is, $\phi$ is indeed a map from $C$ to $I/N$.  Because $I$ is generated over $N$ by the $yq_i+r_i$, we have also shown that $\phi$ is surjective.

Having established that $\phi(h_i) = \overline{0} \in I/N$, we have $N \subseteq \ker(\phi)$.  And $\ker(\phi) \subseteq N$ because $v$ is a non-zero-divisor on $R/N$.  Therefore, $\phi$ induces an isomorphism $\overline{\phi}: C/N \rightarrow I/N$.  It is clear that whenever $N$ is homogeneous so that discussion of degrees makes sense, $\overline{\phi}$ increases degree by $1$ and so  $\overline{\phi}: [C/N](-1) \rightarrow I/N$ is an isomorphism of graded $R/N$-modules.  \end{proof}

Notice that if, in the proof above, one already knows $q_1$ and $yq_1+r_1$ to be non-zero-divisors on $R/N$, for example if $R/N$ is a domain, one may choose $a_1 = 1$ and $a_i = 0$ for $1<i \leq k$.  In this case, the map $\phi$ will be of the same form used in \cite{Gor07}, \cite{Gor08}, and \cite{GMN13}.

As indicated above, the primary use of Theorem \ref{thm:onestep} is in the setting of liaison theory (Corollary \ref{cor:gvdToLia}). We will need the following straightforward fact about saturation:

\begin{lemma}\label{lem:saturated}
If $I \subseteq R$ is homogeneous and unmixed, then $\sqrt{I}$ is the homogeneous maximal ideal $m$ or $I$ is saturated.  
\end{lemma}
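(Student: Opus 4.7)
The plan is to translate saturation into a statement about associated primes and then use unmixedness to rule out the maximal ideal as an associated prime. Recall that for a homogeneous ideal $I$ in $R$, the saturation $[I:m^\infty]$ equals $I$ if and only if $m \notin \ass(I)$; this is the standard fact that $m$-torsion of $R/I$ vanishes precisely when $m$ is not an embedded or minimal associated prime. So the task reduces to showing: if $\sqrt{I} \neq m$, then $m \notin \ass(I)$.

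Suppose $\sqrt{I} \neq m$. Then some minimal prime $P$ of $I$ is properly contained in $m$, which forces $\hgt(P) < n = \hgt(m)$. By definition, $I$ being unmixed means that every $P' \in \ass(I)$ satisfies $\dim(R/P') = \dim(R/I)$, and since $R$ is a polynomial ring over a field (hence catenary and equidimensional), this is equivalent to $\hgt(P') = \hgt(I)$ for every $P' \in \ass(I)$. In particular, every associated prime has the same height as the minimal prime $P$, namely $\hgt(P) < n$. Since $\hgt(m) = n$, the ideal $m$ cannot be among the associated primes of $I$, and hence $I$ is saturated.

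No obstacle is expected here; the statement is essentially a bookkeeping observation about the equivalence between saturation and the absence of $m$ from $\ass(I)$, combined with the fact that unmixedness prevents associated primes of maximal height from appearing unless $I$ is already $m$-primary.
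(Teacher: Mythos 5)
Your proof is correct and follows essentially the same route as the paper's: both reduce saturation to the statement that $m \notin \ass(I)$ and then use unmixedness to equate that with $\sqrt{I} \neq m$. The paper phrases the unmixedness step as ``$m \in \ass(I)$ iff $m \in \min(I)$,'' while you phrase it via equality of heights of associated primes, but the underlying observation is identical.
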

\begin{proof}

Observe that $m$ is an associated prime of $I$ if and only if it is a minimal prime of $I$ if and only if $\sqrt{I} = m$.  
\end{proof}

\begin{corollary}\label{cor:gvdToLia}
Let $I$ be a homogeneous, saturated, unmixed ideal of $R$ and $\text{in}_y I = C_{y,I} \cap (N_{y,I}+\langle y \rangle)$ a nondegenerate geometric vertex decomposition with respect to some variable $y = x_j$ of $R$.  Assume that $N_{y,I}$ is Cohen--Macaulay and $G_0$ and that $C_{y,I}$ is also unmixed.  Then $I$ is obtained from $C_{y,I}$ by an elementary $G$-biliaison of height $1$.
\end{corollary}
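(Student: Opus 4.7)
My plan is to verify directly that the data $(I, C_{y,I}, N_{y,I}, \ell=1)$ satisfies all the requirements of Definition \ref{def:Gbiliaison}. The graded isomorphism of modules, which is the substantive content, has already been produced by Theorem \ref{thm:onestep}, so the corollary amounts to an assembly of ingredients largely in hand and a bookkeeping argument about heights and saturation.

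First I would check the conditions on $I$ and $C_{y,I}$, namely that both are homogeneous, saturated, unmixed ideals of equal height. The hypotheses directly provide these properties for $I$ and give unmixedness of $C_{y,I}$. Homogeneity of $C_{y,I}$ is immediate from its construction: a homogeneous ideal admits a homogeneous reduced Gr\"obner basis with respect to any $y$-compatible term order, whence the $q_i$ and $h_j$ generating $C_{y,I}$ are homogeneous. Saturatedness of $C_{y,I}$ follows from Lemma \ref{lem:saturated}: since $y$ does not appear in any generator of $C_{y,I}$, we have $y \notin \sqrt{C_{y,I}}$, so $\sqrt{C_{y,I}}$ cannot be the homogeneous maximal ideal. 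The height equality $\hgt(I) = \hgt(C_{y,I})$ is then the content of Lemma \ref{lem:height}, which applies because $I$ unmixed implies $R/I$ equidimensional and the geometric vertex decomposition is assumed nondegenerate.

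Next I would verify the hypotheses on the ``link'' ideal $N_{y,I}$: that it is homogeneous, Cohen--Macaulay, $G_0$, contained in $I \cap C_{y,I}$, and of height exactly $\hgt(I) - 1$. Homogeneity is immediate from its Gr\"obner-basis description; Cohen--Macaulayness and $G_0$ are given; and the height identity is a second application of Lemma \ref{lem:height}. The containment $N_{y,I} \subseteq I \cap C_{y,I}$ holds because the generators $h_1, \dots, h_\ell$ of $N_{y,I}$ lie in the reduced Gr\"obner basis of $I$ and, by the very definition of $C_{y,I}$, also belong to $C_{y,I}$.

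Finally, the graded isomorphism $I/N_{y,I} \cong [C_{y,I}/N_{y,I}](-1)$ of $R/N_{y,I}$-modules is precisely the graded conclusion of Theorem \ref{thm:onestep}. To invoke it I need $I$ unmixed (given), $N_{y,I}$ unmixed (which follows from its being Cohen--Macaulay), and the three ideals homogeneous (established above). The truly hard part — constructing the explicit map $f \mapsto fv/u$ using scalars $a_i \in \kappa$ that avoid associated primes of $N_{y,I}$, and checking well-definedness — has already been carried out in the proof of Theorem \ref{thm:onestep}, so the corollary requires no further computation. Combining all of the above, every condition of Definition \ref{def:Gbiliaison} is met with $\ell = 1$, and $I$ is obtained from $C_{y,I}$ by an elementary $G$-biliaison of height $1$.
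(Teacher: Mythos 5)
Your proposal is correct and takes essentially the same route as the paper: both verify the hypotheses of Definition \ref{def:Gbiliaison} by citing Lemma \ref{lem:height} for the height relations, Lemma \ref{lem:saturated} for saturation of $C_{y,I}$, and Theorem \ref{thm:onestep} for the graded isomorphism. You simply spell out in more detail the routine checks (homogeneity of $C_{y,I}$ and $N_{y,I}$ from the homogeneous reduced Gr\"obner basis, the containment $N_{y,I}\subseteq I\cap C_{y,I}$, and unmixedness of $N_{y,I}$ from Cohen--Macaulayness) that the paper's one-sentence proof leaves implicit.
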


\begin{proof}
The height conditions required by the definition of elementary $G$-biliaison are given by Lemma \ref{lem:height}, saturation follows from Lemma \ref{lem:saturated}, and the required isomorphism is constructed in Theorem \ref{thm:onestep}.
\end{proof}

\subsection{Geometrically vertex decomposable ideals and the glicci property}\label{sect:GVDtoGlicci}

We make two observations about linkage before proceeding. Let $S = R[z]$ for a new variable $z$. \begin{enumerate}
\item If $I$ is obtained from $C$ via an elementary $G$-biliaison of height $1$ in $R$, then $IS$ is obtained from $CS$ via an elementary $G$-biliaison of height $1$ in $S$. \label{Observation1}
\item If $I$ is obtained from $C$ via an elementary $G$-biliaison of height $1$ in $R$, then $IS+\langle z \rangle$ is obtained from $CS+\langle z \rangle$ via an elementary $G$-biliaison of height $1$ in $S$. \label{Observation2}
\end{enumerate}

\begin{theorem}\label{thm:glicci}
If $I = I_0 \subseteq R$ is a homogeneous, geometrically vertex decomposable proper ideal, then there is a finite sequence of homogeneous, saturated, unmixed ideals $I_1, \ldots, I_t$ so that $I_{j-1}$ is obtained from $I_{j}$ by an elementary $G$-biliaison of height $1$ for every $1 \leq j \leq t$ and $I_t$ is a complete intersection.  In particular, $I$ is glicci.
\end{theorem}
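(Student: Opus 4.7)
The plan is to induct on the number of variables $n$ of $R$, using Corollary \ref{cor:gvdToLia} as the inductive engine. The base case is when $I$ is generated by indeterminates (including the case $I = \langle 0 \rangle$, with empty generating set), in which case $I$ is already a complete intersection and $t = 0$ suffices.

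For the inductive step, let $I$ be a proper homogeneous GVD ideal that is not generated by indeterminates, let $y$ be the variable witnessing Definition \ref{def:geometricallyVertexDec}(2), and let $N^c, C^c$ denote the contractions of $N_{y,I}, C_{y,I}$ to $R' := \kappa[x_1, \ldots, \widehat{y}, \ldots, x_n]$. In the nondegenerate case I would invoke Corollary \ref{cor:gvdToLia} to realize $I$ as obtained from $C_{y,I}$ by an elementary $G$-biliaison of height $1$. Its hypotheses are verified as follows: $I$ is unmixed by definition, radical by Proposition \ref{prop:radical}, and therefore saturated by Lemma \ref{lem:saturated} (the case in which $\sqrt{I}$ is the homogeneous maximal ideal being excluded since $I$ is not generated by indeterminates); $C_{y,I} = C^c R$ is unmixed because $C^c$ is (it is GVD); $N_{y,I} = N^c R$ is radical, so $R/N_{y,I}$ is $G_0$ (its localizations at minimal primes are fields); and $N^c$ is Cohen--Macaulay by the inductive hypothesis applied to the GVD ideal $N^c$ in $R'$, since the conclusion of the theorem provides a glicci sequence for $N^c$, which forces $R'/N^c$ to be arithmetically Cohen--Macaulay. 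The inductive hypothesis applied to $C^c$ then produces a biliaison sequence for $C^c$ ending at a complete intersection; lifting each step from $R'$ to $R = R'[y]$ via observation (1) yields a corresponding sequence for $C_{y,I}$, and prepending the biliaison from $C_{y,I}$ to $I$ just obtained completes the sequence for $I$.

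The degenerate cases require separate treatment. If $\sqrt{C_{y,I}} = \sqrt{N_{y,I}}$, Proposition \ref{prop:degen-rad} gives $I = N_{y,I} = N^c R$, and the inductive hypothesis for $N^c$ combined with observation (1) finishes. If instead $C_{y,I} = \langle 1 \rangle$, I would first exhibit a homogeneous linear form $y + \ell \in I$ with $\ell \in R'$: from a relation $1 = \sum a_i q_i + \sum b_j h_j$ in $R'$, multiplication by $y$ followed by reduction modulo $I$ yields $y + \sum a_i r_i \in I$, whose degree-$1$ homogeneous component has the desired form. Then the graded automorphism of $R$ sending $y \mapsto y - \ell$ carries $I$ to an ideal of the form $N^c R + \langle y \rangle$; identification of the contraction as $N^c$ uses the $S$-polynomial identity $q_k r_i \equiv q_i r_k \pmod{N^c}$, which holds because $q_k r_i - q_i r_k$ involves no $y$ and hence reduces via the $h_j$ alone in any $y$-compatible term order. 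The inductive hypothesis for $N^c$ together with observation (2) provides the required sequence, which transports back to $I$ under the inverse automorphism. The principal obstacle is verifying that $N_{y,I}$ is Cohen--Macaulay in the nondegenerate case; this is not a formal consequence of the GVD definition and is supplied only by the inductive hypothesis, which is precisely why the induction must be organized by number of variables.
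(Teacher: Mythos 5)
Your proof is correct and takes essentially the same route as the paper's: induction on the number of variables, with Corollary \ref{cor:gvdToLia} supplying the biliaison from $I$ to $C_{y,I}$ in the nondegenerate case, Proposition \ref{prop:degen-rad} reducing the $\sqrt{C_{y,I}}=\sqrt{N_{y,I}}$ case to the inductive hypothesis, and a linear change of variables handling $C_{y,I}=\langle 1\rangle$ before invoking Observation (2). Your explicit production of the linear form $y+\ell\in I$ and the verification that $\psi(I)=N^cR+\langle y\rangle$ via the identity $q_ir_{i_0}\equiv q_{i_0}r_i\pmod{N^c}$ fill in what the paper compresses to the parenthetical ``possibly after a linear change of variables.''
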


\begin{proof} 
Clearly, it suffices to prove the first claim.  We will proceed by induction on $n = \dim(R)$, noting that the case of a dimension $0$ polynomial ring is trivial.

We now take $n \geq 1$ to be arbitrary and assume the result for all proper homogeneous ideals $I$ in polynomial rings of dimension $<n$.  If $I$ is a complete intersection, then there is nothing to prove. Otherwise, there exists some variable $y = x_j$ of $R$ for which $\text{in}_y I =  C_{y,I}  \cap (N_{y,I} +\langle y\rangle)$ is a geometric vertex decomposition with the contractions of $N_{y,I} $ and $C_{y,I}$ to $T = \kappa[x_1,\ldots, \hat{y}, \ldots, x_n]$ geometrically vertex decomposable.  

Suppose first that $ C_{y,I}  = \langle 1 \rangle$, in which case $I = N_{y,I} +\langle y\rangle$ (possibly after a linear change of variables).  By induction, with $\tilde{I}_0 = N_{y,I} \cap T$, there is a sequence of ideals $\tilde{I}_1, \ldots, \tilde{I}_t$ of $T$ so that $\tilde{I}_{j-1}$ is obtained from $\tilde{I}_{j}$ by an elementary $G$-biliaison of height $1$ for every $1 \leq j \leq t$ and $\tilde{I}_t$ is a complete intersection.  Setting $I_j = \widetilde{I}_jR+\langle y \rangle$ for every $1 \leq j \leq t$, the result follows from Observation \eqref{Observation2}, above.  Similarly, the result is essentially immediate from the inductive hypothesis together with Observation \eqref{Observation1} in the other degenerate case $\sqrt{C_{y,I}} = \sqrt{N_{y,I}}$:  indeed, because $I$ is radical by Proposition \ref{prop:radical}, we have $I = C_{y,I} = N_{y,I}$ by Proposition \ref{prop:degen-rad}.

Finally, assume the geometric vertex decomposition with respect to $I$ is nondegenerate, in which case we may apply the inductive hypothesis to $\tilde{I}_1 = C_{y,I} \cap T$.  By induction and in parallel with the previous case, there is a finite sequence of homogeneous, saturated, unmixed ideals $\widetilde{I}_2, \ldots, \widetilde{I}_t$ of $T$ so that $\widetilde{I}_{j-1}$ is obtained from $\widetilde{I}_j$ by an elementary $G$-biliaison of height $1$ in $T$ for every $2 \leq j \leq t$ and $\widetilde{I}_t$ is a complete intersection.  Let $I_j = \widetilde{I}_jR$ for every $2 \leq j \leq t$.  Then with $I_1 = C_{y,I}$, by Observation \eqref{Observation1} above, $I_{j-1}$ is obtained from $I_{j}$ by an elementary $G$-biliaison of height $1$ in $R$ for every $2 \leq j \leq t$ and $I_t$ is a complete intersection.  Hence, it suffices to show that $I$ is obtained from $C_{y,I}$ by an elementary $G$-biliaison of height $1$, but this is Corollary \ref{cor:gvdToLia}. Note that the hypotheses of Corollary \ref{cor:gvdToLia} hold since $C_{y,I}\cap T$ and $N_{y,I}\cap T$ are geometrically vertex decomposable (hence unmixed and radical, and so $G_0$) and glicci (hence Cohen-Macaulay) by induction.
\end{proof}

\begin{corollary}\label{cor:CM}
If $I \subseteq R$ is a homogeneous, geometrically vertex decomposable proper ideal, then $I$ is Cohen--Macaulay.
\end{corollary}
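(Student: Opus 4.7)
The plan is to derive the Cohen--Macaulayness of $R/I$ directly from Theorem \ref{thm:glicci} together with the standard fact from liaison theory (noted in the introduction) that glicci ideals are arithmetically Cohen--Macaulay.

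More concretely, Theorem \ref{thm:glicci} produces a finite sequence of homogeneous, saturated, unmixed ideals $I = I_0, I_1, \ldots, I_t$ such that each $I_{j-1}$ is obtained from $I_j$ via an elementary $G$-biliaison of height $1$ and such that $I_t$ is a complete intersection. I would argue by downward induction on $j$ that each $I_j$ is Cohen--Macaulay. The base case $j = t$ is that a complete intersection in a polynomial ring is Cohen--Macaulay. For the inductive step, Definition \ref{def:Gbiliaison} supplies an ideal $N$ with $N \subseteq I_{j-1} \cap I_j$, $N$ Cohen--Macaulay, $\hgt(N) = \hgt(I_j) - 1$, and a graded isomorphism $I_{j-1}/N \cong [I_j/N](-1)$ of $R/N$-modules. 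A depth-lemma chase on the short exact sequences
\[
0 \to I_j/N \to R/N \to R/I_j \to 0 \qquad \text{and} \qquad 0 \to I_{j-1}/N \to R/N \to R/I_{j-1} \to 0
\]
then transfers the Cohen--Macaulay property from $I_j$ to $I_{j-1}$: the first sequence, combined with the Cohen--Macaulayness of $R/N$ and (by inductive hypothesis) $R/I_j$, forces $I_j/N$ to be Cohen--Macaulay of the maximal possible dimension; the graded isomorphism transports this to $I_{j-1}/N$; and the second sequence then yields that $R/I_{j-1}$ is Cohen--Macaulay.

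The main obstacle, namely the production of the biliaison chain that reaches a complete intersection, has already been overcome in Theorem \ref{thm:glicci}. What remains is a textbook propagation of Cohen--Macaulayness along elementary $G$-biliaisons, so the corollary is essentially an immediate consequence of the preceding theorem and standard liaison-theoretic bookkeeping.
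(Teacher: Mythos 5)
Your proof is correct, and it takes essentially the same route as the paper: the paper's proof of this corollary is the one-line observation that glicci implies Cohen--Macaulay (a standard fact recorded in the introduction), combined with Theorem \ref{thm:glicci}. You have simply unpacked that standard fact by propagating Cohen--Macaulayness backward along the biliaison chain via the depth lemma, which is fine but not strictly necessary given that the paper already invokes ``glicci $\Rightarrow$ arithmetically Cohen--Macaulay'' as known.
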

\begin{proof}
Since glicci implies Cohen--Macaulay, this is immediate from Theorem \ref{thm:glicci}.
\end{proof}

The remainder of this section will concern weakly geometrically vertex decomposable ideals, a direct generalization of the monomial ideals associated to weakly vertex decomposable simplicial complexes in the sense of \cite{NR08} (see Remark \ref{rem:weakVert} below).

\begin{definition}\label{def:weaklyGeometricallyVertexDec}
An ideal $I\subseteq R$ is \textbf{weakly geometrically vertex decomposable} if $I$ is unmixed and if
\begin{enumerate}
\item $I = \langle 1\rangle$ or $I$ is generated by indeterminates in $R$, or
\item (\emph{degenerate case}) for some variable $y = x_j$ of $R$, 
$\text{in}_y I = C_{y,I} \cap (N_{y,I}+\langle y \rangle)$ is a degenerate geometric vertex decomposition and the contraction of $N_{y,I}$ to the ring $\kappa[x_1,\dots,\widehat{y},\dots, x_n]$ is weakly geometrically vertex decomposable, or

\item (\emph{nondegenerate case}) for some variable $y = x_j$ of $R$, $\text{in}_y I = C_{y,I} \cap (N_{y,I}+\langle y \rangle)$ is a nondegenerate geometric vertex decomposition, the contraction of $C_{y,I}$ to the ring $\kappa[x_1,\dots,\widehat{y},\dots, x_n]$ is weakly geometrically vertex decomposable, and $N_{y,I}$ is radical and Cohen--Macaulay. \qedhere
\end{enumerate}
\end{definition}

Notice that it makes no difference whether we require $N_{y,I}$ or the contraction of $N_{y,I}$ to $\kappa[x_1,\dots,\widehat{y},\dots, x_n]$ to be radical and Cohen--Macaulay.  We give two corollaries of Theorem \ref{thm:glicci} concerning weakly geometrically vertex decomposable ideals:

\begin{corollary}
A geometrically vertex decomposable ideal is weakly geometrically vertex decomposable.
\end{corollary}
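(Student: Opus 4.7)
The plan is to induct on $n = \dim R$, matching clauses of Definition \ref{def:geometricallyVertexDec} to clauses of Definition \ref{def:weaklyGeometricallyVertexDec}. In the base case, if $I = \langle 1\rangle$ or $I$ is generated by a subset of the indeterminates, then clause (1) of both definitions holds and there is nothing to verify.

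For the inductive step, I would fix a variable $y = x_j$ for which $\text{in}_y I = C_{y,I}\cap (N_{y,I}+\langle y\rangle)$ is a geometric vertex decomposition and the contractions $C^c = C_{y,I}\cap T$ and $N^c = N_{y,I}\cap T$ to $T = \kappa[x_1,\dots,\widehat{y},\dots,x_n]$ are each geometrically vertex decomposable. If this decomposition is degenerate, then the inductive hypothesis applied to $N^c$ yields that $N^c$ is weakly geometrically vertex decomposable, which is exactly clause (2) of Definition \ref{def:weaklyGeometricallyVertexDec}. If it is nondegenerate, the inductive hypothesis applied to $C^c$ yields that $C^c$ is weakly geometrically vertex decomposable, and it remains only to check the extra requirements in clause (3): that $N_{y,I}$ is radical and Cohen--Macaulay. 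Since $N^c$ is geometrically vertex decomposable, Proposition \ref{prop:radical} gives that $N^c$ is radical and Corollary \ref{cor:CM} gives that it is Cohen--Macaulay; both properties transfer to $N_{y,I} = N^c R$ by the standard fact that polynomial ring extensions preserve radicality and the Cohen--Macaulay property (or equivalently, using the remark immediately preceding the corollary that it suffices to verify these conditions on the contraction).

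I do not expect a substantive obstacle: the argument is essentially a side-by-side comparison of the two definitions, with all of the genuine content isolated in Proposition \ref{prop:radical} and Corollary \ref{cor:CM}. The only point that warrants care is that Corollary \ref{cor:CM} is stated in the homogeneous setting and relies on Theorem \ref{thm:glicci}; in particular this explains why, as announced in the sentence preceding the statement, the result is naturally packaged as a corollary of the glicci theorem rather than as an elementary consequence of the definitions alone.
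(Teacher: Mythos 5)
Your proposal is correct and follows essentially the same route as the paper: induction on $\dim R$, a clause-by-clause match of Definition \ref{def:geometricallyVertexDec} against Definition \ref{def:weaklyGeometricallyVertexDec}, with the substantive work in the nondegenerate case carried by Proposition \ref{prop:radical} and the Cohen--Macaulayness established in Section \ref{sect:GVDtoGlicci} (you cite Corollary \ref{cor:CM}, the paper cites Theorem \ref{thm:glicci} directly; these are interchangeable). The only light structural difference is in the degenerate case, where you verify clause (2) of Definition \ref{def:weaklyGeometricallyVertexDec} directly from the inductive hypothesis applied to $N_{y,I}\cap\kappa[x_1,\dots,\widehat{y},\dots,x_n]$, whereas the paper first invokes Proposition \ref{prop:radical} and Proposition \ref{prop:degen-rad} to identify $I$ with $N_{y,I}$ or $N_{y,I}+\langle y\rangle$ before appealing to induction; your version is marginally more direct, but both are sound, and you have also correctly flagged the same reliance on homogeneity and Theorem \ref{thm:glicci} that the paper itself makes.
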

\begin{proof}

We will proceed by induction on $\dim(R)$.  Suppose that a geometrically vertex decomposable ideal $I$ has the geometric vertex decomposition $\text{in}_y I = C_{y,I} \cap (N_{y,I}+\langle y \rangle)$ with respect to some variable $y = x_j$ of $R$.  By Proposition \ref{prop:radical}, $I$ is radical, and so, if the geometric vertex decomposition is degenerate, then $I = N_{y,I}+\langle y \rangle$ or $I = N_{y,I}$ by Proposition \ref{prop:degen-rad}, and the result is immediate by induction. Hence, we may assume that the geometric vertex decomposition is nondegenerate.  From Theorem \ref{thm:glicci}, we know that $N_{y,I} \cap \kappa[x_1,\ldots,\widehat{y},\ldots,x_n]$ is glicci and so Cohen--Macaulay.  Hence, so is $N_{y,I}$.  Proposition \ref{prop:radical} tells us that $N_{y,I} \cap \kappa[x_1,\dots,\widehat{y},\dots, x_n]$ is radical.  Hence, so is $N_{y,I}$.  By induction, $C_{y,I} \cap \kappa[x_1,\dots,\widehat{y},\dots, x_n]$ is weakly geometrically vertex decomposable and so, by Observation \eqref{Observation1}, $C_{y,I}$ is weakly geometrically vertex decomposable, completing the proof.
\end{proof}

\begin{corollary}\label{weakGVDimpliesgliggi}
A weakly geometrically vertex decomposable ideal is both radical and glicci.
\end{corollary}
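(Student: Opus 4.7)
The plan is to mirror the inductive structure used in the proof of Theorem~\ref{thm:glicci}, while also establishing radicality, which was free in the geometrically vertex decomposable case via Proposition~\ref{prop:radical}. I would induct on $n = \dim R$, with trivial base cases: if $I = \langle 1 \rangle$ or $I$ is generated by indeterminates, then $I$ is radical and a complete intersection, hence glicci.

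For the inductive step, I would split according to the two nontrivial clauses of Definition~\ref{def:weaklyGeometricallyVertexDec}. In the degenerate case, let $T = \kappa[x_1,\dots,\widehat{y},\dots,x_n]$ and apply the inductive hypothesis to $N^c := N_{y,I} \cap T$ to conclude that $N^c$, and hence $N_{y,I}$, is radical and glicci. If $\sqrt{C_{y,I}} = \sqrt{N_{y,I}}$, the containments $N_{y,I} \subseteq C_{y,I} \subseteq \sqrt{C_{y,I}} = \sqrt{N_{y,I}} = N_{y,I}$ force $C_{y,I} = N_{y,I}$; hence $\text{in}_y I = N_{y,I}$ is radical, so $I$ is radical, and Proposition~\ref{prop:degen-rad} then gives $I = N_{y,I}$, which is glicci. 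If instead $C_{y,I} = \langle 1 \rangle$, then, as in the proof of Theorem~\ref{thm:glicci}, a linear change of variables identifies $I$ with $N_{y,I} + \langle y \rangle$, which is radical (since $N_{y,I}$ is radical and its generators may be taken in $T$) and glicci by Observation~\eqref{Observation2}.

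In the nondegenerate case, the inductive hypothesis applied to $C^c := C_{y,I} \cap T$ gives that $C^c$, and hence $C_{y,I}$, is radical, unmixed, and glicci. By the definition of weakly geometrically vertex decomposable, $N_{y,I}$ is radical and Cohen--Macaulay; being radical and unmixed, every localization of $R/N_{y,I}$ at a minimal prime is a reduced Artinian local ring and thus a field, so $R/N_{y,I}$ is $G_0$. Then $\text{in}_y I = C_{y,I} \cap (N_{y,I} + \langle y \rangle)$ is an intersection of two radical ideals and so is radical, whence $I$ is radical. All the hypotheses of Corollary~\ref{cor:gvdToLia} are in place, so $I$ is obtained from the glicci ideal $C_{y,I}$ by an elementary $G$-biliaison of height $1$, and therefore $I$ is glicci.

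The only place I expect to have to tread carefully is the degenerate subcase with $C_{y,I} = \langle 1 \rangle$: one must verify that the isomorphism $R/I \cong R/(N_{y,I}+\langle y \rangle)$ truly arises from a grading-preserving linear change of variables, so that both the radical and glicci properties transfer across it. This transfer is handled implicitly in the proof of Theorem~\ref{thm:glicci}, and I would simply invoke that argument.
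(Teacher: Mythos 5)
Your proposal is correct and follows the same approach as the paper, which states simply that the proofs of Proposition~\ref{prop:radical} and Theorem~\ref{thm:glicci} adapt to the weakly geometrically vertex decomposable setting and briefly notes which properties of $N_{y,I}$ are assumed versus derived; you have carried out that adaptation in full detail, including the observation that radical plus unmixed yields $G_0$ for $N_{y,I}$ in the nondegenerate case and the careful ordering of deductions in the degenerate cases.
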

\begin{proof}
The proofs of Proposition \ref{prop:radical} and Theorem \ref{thm:glicci} easily adapt to the weakly geometrically vertex decomposable setting. 
In particular, in these proofs, we only used that the ideal $N_{y,I}$ was geometrically vertex decomposable to obtain that $N_{y,I}$ was Cohen--Macaulay, radical, saturated, and unmixed.  The first two of those properties are automatic from the definition of weakly geometrically vertex decomposable and the last two follow because Cohen--Macaulay ideals are always unmixed and always saturated unless they are the maximal ideal.
\end{proof}

\begin{remark}\label{rem:weakVert}
Let $\Delta$ be a simplicial complex on vertex set $[n]$. As with Proposition \ref{prop:VDiffGVD}, it is a straightforward exercise to show that $\Delta$ is weakly vertex decomposable in the sense of U. Nagel and T. R\"omer (see \cite[Definition 2.2]{NR08}) if and only if $I_\Delta$ is weakly geometrically vertex decomposable. Furthermore, by restricting our proofs of Corollary \ref{weakGVDimpliesgliggi} and Theorem \ref{thm:glicci} to the case of squarefree monomial ideals, we recover \cite[Theorem 3.3]{NR08}, which asserts that $I_\Delta$ is \emph{squarefree glicci} whenever $\Delta$ is a weakly vertex decomposable simplicial complex.  
\end{remark}

We end by showing that the condition of being weakly geometrically vertex decomposable is strictly weaker than that of being geometrically vertex decomposable.

\begin{example}
This example is a minor modification of Example \ref{ex:nolex}.  Take $I$ to be the ideal of $\kappa[x,y,z,w,r,s]$, generated by $\{y(zs-x^2), ywr, wr(x^2+s^2+z^2+wr)\}$, which, following the argument of Example \ref{ex:nolex}, is a universal Gr\"obner basis.
 Observe that $I$ is squarefree only in $y$, so we must first degenerate with respect to $y$, which yields $C_{y,I} = \langle zs-x^2,wr \rangle$ and $N_{y,I} = \langle (wr)(x^2+s^2+z^2+wr) \rangle$.  We saw in Example \ref{ex:nolex} that the contraction of $C_{y,I}$ to $\kappa[x,z,w,r,s]$ was geometrically vertex decomposable.  Here the contraction of $N_{y,I}$ to $\kappa[x,z.w,r,s]$ is clearly radical and Cohen--Macaulay but has no geometric vertex decomposition because it is not squarefree in any variable.  Hence, $I$ is weakly geometrically vertex decomposable but not geometrically vertex decomposable.
\end{example}

\subsection{Applications to Gr\"obner bases and degenerations}\label{sect:GBapplications}

One can not in general transfer the Cohen--Macaulay property from an ideal to its initial ideal or from one component of a variety to the whole variety.  However, in the context of geometric vertex decomposition, we can use the combination of Cohen--Macaulyness of a homogeneous ideal $I$ and of the component $N_{y,I}+\langle y \rangle$ (equivalently, of $N_{y,I}$) to infer the same about $\text{in}_y I$.

\begin{corollary}\label{cor:allCM}
Suppose that $\text{in}_y I = C_{y,I} \cap (N_{y,I}+\langle y \rangle)$ is a nondegenerate geometric vertex decomposition of the homogeneous ideal $I\subseteq R$ and that both $N_{y,I}$ and $I$ are Cohen--Macaulay. Then, $C_{y,I}$ and $\text{in}_y I$ are Cohen--Macaulay as well.  
\end{corollary}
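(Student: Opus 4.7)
The plan is to use Theorem \ref{thm:onestep} to transfer Cohen--Macaulayness from $I$ to $C_{y,I}$ via the isomorphism $I/N_{y,I} \cong [C_{y,I}/N_{y,I}](-1)$, and then to deduce Cohen--Macaulayness of $\text{in}_y I$ from a Mayer--Vietoris short exact sequence applied to the decomposition $\text{in}_y I = C_{y,I} \cap (N_{y,I}+\langle y\rangle)$. Throughout, set $d = \dim(R/N_{y,I}) = n - \hgt(N_{y,I})$; by Lemma \ref{lem:height}, $\dim(R/I) = \dim(R/C_{y,I}) = d-1$, and since $I$ and $N_{y,I}$ are Cohen--Macaulay (hence unmixed), the hypotheses of Theorem \ref{thm:onestep} are satisfied.

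For the first step, I would apply the depth lemma to the short exact sequence
\[
0 \to I/N_{y,I} \to R/N_{y,I} \to R/I \to 0,
\]
with $\depth(R/N_{y,I}) = d$ and $\depth(R/I) = d-1$. This yields $\depth_R(I/N_{y,I}) \geq \min(d, (d-1)+1) = d$. Since the annihilator of $I/N_{y,I}$ contains $N_{y,I}$, we have $\dim(I/N_{y,I}) \leq d$, so $I/N_{y,I}$ is a maximal Cohen--Macaulay $R/N_{y,I}$-module. By Theorem \ref{thm:onestep}, so is $C_{y,I}/N_{y,I}$. A second application of the depth lemma to
\[
0 \to C_{y,I}/N_{y,I} \to R/N_{y,I} \to R/C_{y,I} \to 0
\]
then gives $\depth(R/C_{y,I}) \geq \min(d-1, d) = d-1 = \dim(R/C_{y,I})$, so $C_{y,I}$ is Cohen--Macaulay.

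For the second step, I would apply the depth lemma to the Mayer--Vietoris sequence
\[
0 \to R/\text{in}_y I \to R/C_{y,I} \oplus R/(N_{y,I}+\langle y\rangle) \to R/(C_{y,I}+\langle y\rangle) \to 0,
\]
where I have used $N_{y,I} \subseteq C_{y,I}$ (immediate from Definition \ref{def:gvdKMS}) to simplify the rightmost term. Because $C_{y,I}$ and $N_{y,I}$ both admit generating sets free of $y$, the element $y$ is a nonzerodivisor on both $R/C_{y,I}$ and $R/N_{y,I}$; hence $R/(N_{y,I}+\langle y\rangle)$ and $R/(C_{y,I}+\langle y\rangle)$ are Cohen--Macaulay of dimensions $d-1$ and $d-2$ respectively. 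The depth lemma then yields $\depth(R/\text{in}_y I) \geq \min(d-1, (d-2)+1) = d-1$, and since $\dim(R/\text{in}_y I) = n - \hgt(I) = d - 1$ (as recorded in the proof of Lemma \ref{lem:height}), $\text{in}_y I$ is Cohen--Macaulay as well.

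The main potential obstacle is the bookkeeping of dimensions and depths through the two applications of the depth lemma; this depends on Lemma \ref{lem:height} for the height relations among $I$, $C_{y,I}$, and $N_{y,I}$, and on the $y$-free structure of the generators of $C_{y,I}$ and $N_{y,I}$ to ensure that $y$ is a nonzerodivisor on the relevant quotients. With those ingredients in place, the conclusion follows from standard homological algebra.
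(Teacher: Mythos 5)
Your proof is correct, but it takes a somewhat different route than the paper's. The paper works with local cohomology throughout: from $0 \to I/N_{y,I} \to R/N_{y,I} \to R/I \to 0$ it deduces $H^i_m(I/N_{y,I}) = 0$ for $i \leq d$, and then handles \emph{both} $C_{y,I}$ and $\text{in}_y I$ with the same type of long exact sequence argument, using the additional observation that multiplication by $y$ is an isomorphism $C_{y,I}/N_{y,I} \xrightarrow{\;y\;} (\text{in}_y I)/N_{y,I}$. You instead track depth via the depth lemma: the first step (showing $I/N_{y,I}$ is a maximal Cohen--Macaulay $R/N_{y,I}$-module and pushing this through the isomorphism of Theorem \ref{thm:onestep}) is essentially the derived-functor-free version of what the paper does, but your treatment of $\text{in}_y I$ is genuinely different. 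Rather than producing a second isomorphism to reuse the same argument, you apply the depth lemma to the Mayer--Vietoris sequence
\[
0 \to R/\text{in}_y I \to R/C_{y,I} \oplus R/(N_{y,I}+\langle y\rangle) \to R/(C_{y,I}+\langle y\rangle) \to 0,
\]
using that $y$ is a nonzerodivisor modulo both $N_{y,I}$ and $C_{y,I}$ (their generators are $y$-free) to get the dimensions of the two rightmost terms right. This makes the role of the decomposition $\text{in}_y I = C_{y,I} \cap (N_{y,I}+\langle y\rangle)$ more transparent, at the cost of requiring Cohen--Macaulayness of $C_{y,I}$ to be established first; the paper's version treats $C_{y,I}$ and $\text{in}_y I$ symmetrically and does not need one to prove the other. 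Both arguments are sound.
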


\begin{proof}
For convenience, write $N = N_{y,I}$ and $C = C_{y,I}$.   Because $I$ and $N$ are Cohen--Macaulay, they are unmixed.  Hence, we may apply Theorem \ref{thm:onestep} to see that $I/N \cong C/N$.  It is easy to see that $C/N \xrightarrow{y} \text{in}_y I/N$ is also an isomorphism, and so $I/N \cong \text{in}_y I/N$.  Let $m$ denote the homogeneous maximal ideal of $R$, and let $H^i_m(M)$ denote the $i^{th}$ local cohomology module of the $R$-module $M$ with support in $m$.  Because $I$ homogeneous implies $\text{in}_y I$ homogeneous, it is sufficient to check Cohen--Macaulayness at $m$.  Let $d = \dim(R/I) = \dim(R/N)-1$.  The short exact sequence $0 \rightarrow I/N \rightarrow R/N \rightarrow R/I \rightarrow 0$ tells us that $H^i_m(I/N) \cong H^{i-1}_m(R/I) = 0$ for all $i \leq d$ because $H^i_m(R/N) = 0$ for all $i \leq d$.  Then from the short exact sequence $0 \rightarrow \text{in}_y I/N \rightarrow R/N \rightarrow R/\text{in}_y I \rightarrow 0$ together with the fact that $\text{in}_y I/N \cong I/N$, we have \[
H^{i-1}_m(R/\text{in}_y I) \cong H^i_m(\text{in}_y I/N) \cong H^i_m(I/N) \cong H^{i-1}_m(R/I) = 0
\] for all $i-1<d = \dim(R/\text{in}_y(I))$, and so $R/\text{in}_y(I)$ is Cohen--Macaulay.  

The argument in the case of $C$ follows the same line using the short exact sequence $0 \rightarrow C/N \rightarrow R/N \rightarrow R/C \rightarrow 0$.  
\end{proof}

One consequence of Corollary \ref{cor:allCM} is that we may omit as a hypothesis that $C_{y,I}$ be unmixed in Corollary \ref{cor:gvdToLia} whenever $I$ is Cohen--Macaulay.

We will now describe conditions that allow one to use the map constructed in Theorem \ref{thm:onestep} in order to conclude that a known set of generators for $I$ forms a Gr\"obner basis when Gr\"obner bases for $C_{y,I}$ and $N_{y,I}$ are known.  The result complements the framework of \cite{KMY09}, in which one begins with a Gr\"obner basis of $I$ and concludes that the resultant generating sets of $C_{y,I}$ and $N_{y,I}$ are also Gr\"obner bases.  For convenience, we recall a lemma from \cite{GMN13}: 

\begin{lemma}\label{GVLmodified}\cite[Lemma 1.12]{GMN13}
Fix a term order $<$ and homogeneous ideals $N$, $C$, $I$, and $\tilde{I}$ in a polynomial ring with $N \subseteq I \cap C$ and $\tilde{I} \subseteq \text{in}_<(I)$.  If $I/N \cong [C/N](-1)$ and $\tilde{I}/\text{in}_<(N) \cong [\text{in}_<(C)/\text{in}_<(N)](-1)$, then $\tilde{I} = \text{in}_<I$.  
\end{lemma}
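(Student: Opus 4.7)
The plan is to establish the equality $\tilde{I} = \text{in}_<(I)$ via a Hilbert function comparison, leveraging the fact that taking initial ideals preserves Hilbert functions. Since we are already given the containment $\tilde{I} \subseteq \text{in}_<(I)$ and both ideals are homogeneous, it will suffice to show that they have the same Hilbert function.

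First, I would note that the hypothesis $N \subseteq I \cap C$ implies $\text{in}_<(N) \subseteq \text{in}_<(I)$ and $\text{in}_<(N) \subseteq \text{in}_<(C)$, so all the relevant quotients are well-defined. Next, I would invoke the standard fact that a homogeneous ideal $J$ and its initial ideal $\text{in}_<(J)$ have the same Hilbert function, which implies that the graded modules $J/J'$ and $\text{in}_<(J)/\text{in}_<(J')$ share a Hilbert function whenever $J' \subseteq J$ are homogeneous ideals. Applying this to the pairs $N \subseteq I$ and $N \subseteq C$, I obtain
\[
H_{I/N}(d) = H_{\text{in}_<(I)/\text{in}_<(N)}(d) \quad \text{and} \quad H_{C/N}(d) = H_{\text{in}_<(C)/\text{in}_<(N)}(d)
\]
for every $d$.

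Then, chaining the two given graded isomorphisms together with these Hilbert function identities:
\[
H_{\tilde{I}/\text{in}_<(N)}(d) = H_{\text{in}_<(C)/\text{in}_<(N)}(d-1) = H_{C/N}(d-1) = H_{I/N}(d) = H_{\text{in}_<(I)/\text{in}_<(N)}(d).
\]
Since $\tilde{I}$ and $\text{in}_<(I)$ both contain $\text{in}_<(N)$, subtracting gives $H_{\tilde{I}} = H_{\text{in}_<(I)}$. Combined with the containment $\tilde{I} \subseteq \text{in}_<(I)$ of homogeneous ideals, equality follows.

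There is no substantial obstacle here; the argument is a routine bookkeeping of graded shifts and Hilbert functions. The only point requiring care is keeping the degree shift $(-1)$ consistent on both sides of the calculation, so that the shift cancels uniformly when passing from the isomorphism for $\tilde{I}/\text{in}_<(N)$ to the desired equality of Hilbert functions for $\tilde{I}$ and $\text{in}_<(I)$.
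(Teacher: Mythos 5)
Your argument is correct. Note, though, that the paper does not prove this lemma at all -- it cites \cite[Lemma 1.12]{GMN13} and simply remarks that the proof there carries over to the present formulation. Your reconstruction via Hilbert functions (comparing two homogeneous ideals one of which contains the other, and using that Gr\"obner degeneration preserves Hilbert functions together with the two given graded isomorphisms to force equality of Hilbert functions) is exactly the standard liaison-theoretic mechanism that underlies the result in \cite{GMN13}, so in substance you have supplied the argument the paper is outsourcing. The chain of equalities is correct and the degree shift $(-1)$ is handled consistently: the shift enters on both sides and cancels when you subtract off $H_{\text{in}_<(N)}$, so no residual twist remains. One small point worth making explicit is that the isomorphism $\tilde{I}/\text{in}_<(N) \cong [\text{in}_<(C)/\text{in}_<(N)](-1)$ in the hypothesis implicitly requires $\text{in}_<(N) \subseteq \tilde{I}$, which is what allows you to subtract $H_{\text{in}_<(N)}$ from both $H_{\tilde{I}/\text{in}_<(N)}$ and $H_{\text{in}_<(I)/\text{in}_<(N)}$ to pass to $H_{\tilde{I}} = H_{\text{in}_<(I)}$.
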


Although the lemma is stated differently in \cite{GMN13}, the proof given there also applies to the conditions as stated above.

\begin{corollary}\label{cor:Gb(I)}
Let $I = \langle yq_1+r_1,\dots, yq_k+r_k,h_1,\dots, h_\ell \rangle$ be a homogenous ideal of  $R$ with $y = x_j$ some variable of $R$ and $y$ not dividing any term of any $q_i$ for $1 \leq i \leq k$ nor of any $h_j$ for $1 \leq j \leq \ell$. Fix a term order $<$, and suppose that $\mathcal{G}_C = \{q_1,\dots, q_k,h_1,\dots, h_\ell\}$ and $\mathcal{G}_N = \{h_1,\dots, h_\ell\}$ are Gr\"obner bases for the ideals they generate, which we call $C$ and $N$, respectively.  Assume that $\text{in}_<(yq_i+r_i) = y \cdot \text{in}_<q_i$ for all $1 \leq i \leq k$. Assume also that $\hgt(I)$, $\hgt(C)>\hgt(N)$ and that $N$ is unmixed.  Let $M = \begin{pmatrix}
q_1& \cdots & q_k\\
r_1& \cdots & r_k
\end{pmatrix}.$ If the ideal of $2$-minors of $M$ is contained in $N$, then the given generators of $I$ are a Gr\"obner basis.
\end{corollary}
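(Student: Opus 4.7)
The plan is to apply Lemma \ref{GVLmodified} to the ideal
\[
\tilde{I} := \langle y \cdot \text{in}_< q_1, \ldots, y \cdot \text{in}_< q_k, \text{in}_< h_1, \ldots, \text{in}_< h_\ell \rangle,
\]
which lies inside $\text{in}_< I$ because each $y \cdot \text{in}_< q_i = \text{in}_<(yq_i + r_i)$ by hypothesis and each $\text{in}_< h_j$ is the leading term of a given generator. Once the lemma applies, we obtain $\tilde{I} = \text{in}_< I$, so the leading terms of the given generators of $I$ generate $\text{in}_< I$, and those generators therefore form a Gr\"obner basis of $I$ with respect to $<$. Note that $N \subseteq I \cap C$ is automatic since each $h_j$ belongs to both $I$ and $C$.

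The nontrivial hypotheses of Lemma \ref{GVLmodified} are the two graded isomorphisms $I/N \cong [C/N](-1)$ and $\tilde{I}/\text{in}_< N \cong [\text{in}_< C/\text{in}_< N](-1)$. For the first, I will adapt the construction of Theorem \ref{thm:onestep}. Using the height hypotheses $\hgt(I), \hgt(C) > \hgt(N)$, unmixedness of $N$, and $|\kappa| = \infty$, one selects $a_1, \ldots, a_k \in \kappa$ so that $u := \sum_i a_i q_i$ and $v := \sum_i a_i(yq_i + r_i)$ lie outside every associated prime of $N$ and so are non-zero-divisors on $R/N$. Define $\phi : C \to I/N$ by sending $f$ to the unique class $\overline{g} \in I/N$ satisfying $fv - gu \in N$; uniqueness follows from $u$ being a non-zero-divisor on $R/N$. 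On the generator $q_i$ the valid choice $g = yq_i + r_i$ is confirmed by the computation
\[
q_i v - (yq_i + r_i)u = \sum_j a_j \bigl(q_i r_j - q_j r_i\bigr),
\]
which lies in $N$ by the $2$-minor hypothesis on $M$; on each $h_j$, the choice $g = 0$ works since $h_j v \in N$. That $\phi$ descends to a graded isomorphism $[C/N](-1) \cong I/N$ with kernel exactly $N$ then follows as in Theorem \ref{thm:onestep}.

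For the second isomorphism, since $\mathcal{G}_N$ is a Gr\"obner basis and no $h_j$ involves $y$, the monomial ideal $\text{in}_< N = \langle \text{in}_< h_1, \ldots, \text{in}_< h_\ell \rangle$ is generated by monomials not involving $y$, so $y$ is a non-zero-divisor on $R/\text{in}_< N$. Because $\mathcal{G}_C$ is a Gr\"obner basis, $\text{in}_< C = \langle \text{in}_< q_1, \ldots, \text{in}_< q_k, \text{in}_< h_1, \ldots, \text{in}_< h_\ell \rangle$, so $\text{in}_< C/\text{in}_< N$ is generated as an $R/\text{in}_< N$-module by the classes of $\text{in}_< q_i$. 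Multiplication by $y$ carries these to the classes of $y \cdot \text{in}_< q_i$, which generate $\tilde{I}/\text{in}_< N$; together with the non-zero-divisor property of $y$, this yields the graded isomorphism $[\text{in}_< C/\text{in}_< N](-1) \cong \tilde{I}/\text{in}_< N$.

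The principal obstacle is the construction of $\phi$: unlike in the proof of Theorem \ref{thm:onestep}, one cannot appeal to a Gr\"obner basis of $I$ to conclude that $q_i r_j - q_j r_i \in N$ via reduction by the $h_j$, since producing such a Gr\"obner basis is precisely the goal here. The $2$-minor hypothesis on $M$ is included exactly to serve as the direct substitute for this step, bridging the gap from the given Gr\"obner bases of $C$ and $N$ to the desired Gr\"obner basis of $I$.
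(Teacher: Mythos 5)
Your proposal is correct and follows essentially the same path as the paper's proof: construct the isomorphism $[C/N](-1)\cong I/N$ by adapting Theorem \ref{thm:onestep}, with the height/unmixedness hypotheses supplying the non-zero-divisors $u,v$ and the $2$-minor hypothesis on $M$ substituting for the Gr\"obner reduction step; construct $[\text{in}_<C/\text{in}_<N](-1)\xrightarrow{y}\tilde I/\text{in}_<N$ using that $\text{in}_<N$ is a monomial ideal not involving $y$; and invoke Lemma \ref{GVLmodified}. Your closing remark correctly identifies the role of the $2$-minor hypothesis as replacing the unavailable Gr\"obner basis of $I$, which is precisely the point of the argument.
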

\begin{proof}
Following the proof of Theorem \ref{thm:onestep}, the conditions that $\hgt(I)$, $\hgt(C)>\hgt(N)$ and that $N$ be unmixed imply that the elements $u$ and $v$ of Theorem \ref{thm:onestep} are non-zero-divisors on $R/N$.  The condition that the ideal of $2$-minors of $M$ be contained in $N$ implies that \[
r_i(a_1q_1 +\cdots+a_kq_k)-q_i(a_1r_1 +\cdots+a_kr_k)  = a_1(r_iq_1-r_1q_i)+\cdots+a_k(r_iq_k-r_kq_i)\in N
\] for every $1 \leq i \leq k$.  The remainder of the argument from Theorem \ref{thm:onestep} that $\overline{\varphi}: [C/N](-1) \rightarrow I/N$ is an isomorphism remains intact in this setting.

Set $\tilde{I} = \langle y \cdot \text{in}_<(q_1), \ldots, y \cdot \text{in}_<(q_k), \text{in}_<(h_1), \ldots, \text{in}_<(h_\ell) \rangle$.  Because $\mathcal{G}_C$ and $\mathcal{G}_N$ are Gr\"obner bases, we know  that $\text{in}_< C = \langle \text{in}_<(q_1), \ldots, \text{in}_<(q_k), \text{in}_<(h_1), \ldots, \text{in}_<(h_\ell) \rangle$ and that $\text{in}_< N = \langle \text{in}_<(h_1), \ldots, \text{in}_<(h_\ell)\rangle $. Because $\text{in}_< (yq_i+r_i) = y \cdot \text{in}_<(q_i)$ for each $1 \leq i \leq k$, the map $[\text{in}_<C/\text{in}_<N](-1)\xrightarrow{y} \tilde{I}/\text{in}_<N$ is also an isomorphism.  It follows from Lemma \ref{GVLmodified} that $\tilde{I} = \text{in}_< I$.
\end{proof}

\begin{example}[The Veronese Embedding]
As an application of Corollary \ref{cor:Gb(I)}, we give a concise inductive proof that the usual set of homogeneous equations defining the image of the $d^{\rm{th}}$ Veronese $\nu_d:\mathbb{P}^1 \rightarrow \mathbb{P}^d$ forms a Gr\"obner basis for any $d\geq 1$.  With homogeneous coordinates $[s:t]$ on $\mathbb{P}^1$ and $[x_0: \cdots :x_d]$ on $\mathbb{P}^d$, recall that the $d^{th}$ Veronese is the map $[s:t] \mapsto [s^d:s^{d-1}t:\cdots:st^{d-1}:t^d]$. 
Let $A_d = \begin{pmatrix}
x_0 & x_1 &\cdots & x_{d-1}\\
x_1 & x_2 &\cdots & x_d
\end{pmatrix}$, let $\mathcal{G}_d$ denote the set of $2\times 2$ minors of $A_d$, and let $I = \langle \mathcal{G}_d\rangle$ be the ideal generated by $\mathcal{G}
_d$.
The image of the $\nu_d$ is defined by $I$, 
which is to say that there is a ring isomorphism $\dfrac{\kappa[x_0, \ldots, x_d]}{I} \rightarrow \kappa[s^d, s^{d-1}t, \ldots, st^{d-1}, t^d]\subseteq \kappa[s,t]$ given by $x_i \mapsto s^{d-i}t^i$ for $0 \leq i \leq d$. 

We now show that $\mathcal{G}_d$ is a Gr\"obner basis of $I$ with respect to the lexicographic monomial order with $x_d>x_{d-1}>.\cdots>x_1>x_0$. We proceed by induction on $d$, noting that $d = 1$ is trivial because in that case $I = \langle 0 \rangle$.   For $d \geq 2$ and with notation as in Corollary \ref{cor:Gb(I)}, notice that $C= \langle x_0, \ldots, x_{d-2} \rangle$ and that $N = \langle \mathcal{G}_{d-1}\rangle$, whose given generators are a Gr\"obner basis by induction.  Because $N$ is a prime ideal properly contained in $C \cap I$, we know both that $N$ is unmixed and also that $\hgt(I)$, $\hgt(C)>\hgt(N)$.  Lastly, observe that the ideal generated by the $2\times 2$ minors of $M = \begin{pmatrix}
x_0& x_1 &\cdots & x_{d-2}\\
x_1x_{d-1}& x_2 x_{d-1} &\cdots & x_{d-1}^2
\end{pmatrix}$ is equal to  $x_{d-1} \cdot N$ and so is contained in $N$. Thus, the result follows from Corollary \ref{cor:Gb(I)}. \qedhere

\end{example}

\section{Some well-known families of ideals are glicci}\label{sect:applications}

Many well-known classes of ideals Gr\"obner degenerate to Stanley--Reisner ideals of vertex decomposable complexes. In this section, we recall a few of these classes and deduce that they are glicci, thus providing further evidence for an affirmative answer to the question of whether every homogeneous Cohen--Macaulay ideal is glicci \cite[Question 1.6]{KM+01}. As in Section \ref{sect:GVDGlicci}, we will assume throughout this section that the field $\kappa$ is infinite.

The main result we need for our applications is as follows. It is immediately obtained by combining Proposition \ref{prop:simplicial} with Corollary \ref{weakGVDimpliesgliggi}. 

\begin{corollary}\label{cor:AutomaticallyGlicci}
Let $I\subseteq \kappa[x_1,\dots, x_n]$ be a homogeneous ideal, and let $<$ denote the lexicographic order with $x_1>x_2>\cdots >x_n$. If $\text{in}_<I$ is the Stanley--Reisner ideal of a $<$-compatibly vertex decomposable simplicial complex on $[n]$ for the vertex order $1>2>\cdots >n$, then $I$ is glicci.
\end{corollary}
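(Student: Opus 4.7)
The plan is to derive this corollary as an almost immediate consequence of two results already established in the paper. First, I would invoke Proposition \ref{prop:simplicial}, which gives an if-and-only-if correspondence between $<$-compatibly geometrically vertex decomposable ideals (for the lex order $x_1 > \cdots > x_n$) and ideals whose initial ideal with respect to $<$ is the Stanley--Reisner ideal of a $<$-compatibly vertex decomposable simplicial complex. The hypothesis of the corollary is exactly the right-hand side of that equivalence, so Proposition \ref{prop:simplicial} yields that $I$ is $<$-compatibly geometrically vertex decomposable.

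Next, I would observe that $<$-compatibly geometrically vertex decomposable is a strengthening of geometrically vertex decomposable: condition (2*) in the compatible definition picks out a specific variable at each step, but it still exhibits $I$ as satisfying Definition \ref{def:geometricallyVertexDec}. Hence $I$ is geometrically vertex decomposable.

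Finally, because $I$ is by hypothesis homogeneous, Theorem \ref{thm:glicci} (or equivalently Corollary \ref{weakGVDimpliesgliggi} after passing through the implication geometrically vertex decomposable $\Rightarrow$ weakly geometrically vertex decomposable) applies and produces the desired sequence of elementary $G$-biliaisons of height $1$ terminating in a complete intersection, witnessing that $I$ is glicci. No additional work is required; the proof is essentially a three-line citation chain, and I do not anticipate any genuine obstacle. The only minor point to double-check is that the hypotheses of Theorem \ref{thm:glicci}, namely that $I$ be a homogeneous geometrically vertex decomposable \emph{proper} ideal, are satisfied: if $I = \langle 1 \rangle$ the statement is vacuous (or trivial, depending on convention), and otherwise $I$ is proper and we are in the setting of Theorem \ref{thm:glicci}.
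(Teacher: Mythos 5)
Your proof is correct and follows essentially the same route the paper takes: the paper also obtains the corollary by combining Proposition~\ref{prop:simplicial} with Corollary~\ref{weakGVDimpliesgliggi} (your use of Theorem~\ref{thm:glicci} in place of Corollary~\ref{weakGVDimpliesgliggi} is an immaterial variant, since both close the chain once $I$ is known to be geometrically vertex decomposable). The only caveat worth keeping is the one you already flag, namely that the statement is trivial when $I$ is not proper.
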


We now discuss three classes of ideals which satisfy the hypotheses of Corollary \ref{cor:AutomaticallyGlicci}. We omit many definitions of the particular ideals in question, and instead provide references. 

\subsubsection*{Schubert determinantal ideals}
Let $X = (x_{ij})$ be an $n\times n$ matrix of variables and let $R = \kappa[x_{ij}]$ be the polynomial ring in the matrix entries of $X$.  
Given a permutation $w\in S_n$, there is an associated generalized determinantal ideal $I_w\subseteq R$, called a \emph{Schubert determinantal ideal}. Schubert determinantal ideals and their corresponding \emph{matrix Schubert varieties} were introduced by W. Fulton in \cite{Fulton}.

Fix the lexicographical monomial order $<$ on $R$ defined by $x_{ij}>x_{kl}$ if $i<k$ or $i = k$ and $j>l$. This monomial order is \emph{antidiagonal}, that is, the initial term of the determinant of a submatrix $Y$ of $X$ is the product of the entries along the antidiagonal of $Y$. For this monomial order, $\text{in}_<I_w$ is the Stanley--Reisner ideal of a simplicial complex, called a \emph{subword complex}, which is $<$-compatibly vertex decomposable (see \cite{KnutsonMiller} or \cite[Ch. 16.5]{MillerSturmfels}). Corollary \ref{cor:AutomaticallyGlicci} thus immediately implies: 

\begin{proposition}\label{prop:SchubGlicci}
Schubert determinantal ideals are glicci.
\end{proposition}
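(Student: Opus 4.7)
The plan is to apply Corollary \ref{cor:AutomaticallyGlicci} directly, so the task reduces to verifying that $\text{in}_<I_w$ is the Stanley--Reisner ideal of a $<$-compatibly vertex decomposable simplicial complex, where $<$ is the antidiagonal lexicographic order with $x_{11}>x_{12}>\cdots>x_{1n}>x_{21}>\cdots>x_{nn}$.

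First, I would invoke the Knutson--Miller theorem that the Fulton generators of $I_w$ (the minors coming from the essential set) form a Gröbner basis with respect to any antidiagonal term order, and that the resulting initial ideal $\text{in}_<I_w$ is a squarefree monomial ideal which is exactly the Stanley--Reisner ideal of the \emph{subword complex} $\Delta(Q_w,w)$ associated with a suitable reduced pipe-dream word $Q_w$ (see \cite{KnutsonMiller} or \cite[Ch.~16.5]{MillerSturmfels}). This is already asserted in the paragraph preceding the proposition, so I would cite it rather than reprove it.

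Next, I would recall that Knutson and Miller show subword complexes are vertex decomposable, and, crucially, that their proof produces a vertex decomposition by peeling off the letters of the word $Q_w$ in order. Under the antidiagonal order above, the variables $x_{ij}$ indexing the positions of $Q_w$ are naturally ordered so that the $<$-largest variable corresponds to the first letter of $Q_w$ that is being peeled. Thus the star/deletion decomposition of $\Delta(Q_w,w)$ at the $<$-largest vertex agrees with the Knutson--Miller recursion, and the two subcomplexes produced at each step are themselves subword complexes to which the same argument applies inductively. In particular, the vertex decomposition is $<$-compatible in the sense of the definition preceding Proposition \ref{prop:simplicial}.

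With these two inputs in hand, $\text{in}_<I_w$ satisfies the hypothesis of Corollary \ref{cor:AutomaticallyGlicci}, and we conclude that $I_w$ is glicci. The only subtle point, and what I would regard as the main place that needs care rather than a calculation, is matching the specific antidiagonal lex order on the matrix variables with the order in which the Knutson--Miller recursion peels letters from $Q_w$; once that bookkeeping is done the result is essentially automatic from the previously established machinery.
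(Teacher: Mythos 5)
Your proposal takes exactly the paper's route: invoke the Knutson--Miller theorems that $\text{in}_< I_w$ is the Stanley--Reisner ideal of a subword complex and that the subword-complex recursion is a vertex decomposition compatible with the antidiagonal lex order, and then apply Corollary \ref{cor:AutomaticallyGlicci}. The paper treats that one paragraph of citations as the entire proof, so in spirit and in substance you are doing the same thing, and your extra remark about carefully matching the variable order to the order in which letters of $Q_w$ are peeled is exactly the right point to flag.

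There is, however, one concrete slip worth fixing. The explicit order you write, $x_{11}>x_{12}>\cdots>x_{1n}>x_{21}>\cdots>x_{nn}$, is \emph{not} antidiagonal: under this lex order the $2\times2$ determinant $x_{11}x_{22}-x_{12}x_{21}$ has leading term $x_{11}x_{22}$, the \emph{diagonal} term. The order used in the paper is $x_{ij}>x_{kl}$ iff $i<k$ or ($i=k$ and $j>l$); spelled out, within each row you list columns from right to left, so $x_{1n}>x_{1,n-1}>\cdots>x_{11}>x_{2n}>\cdots>x_{21}>\cdots>x_{nn}>\cdots>x_{n1}$. Under that order every minor has its antidiagonal term as leading term, which is what the cited Knutson--Miller results require in order to identify $\text{in}_< I_w$ with the Stanley--Reisner ideal of a subword complex. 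Under the diagonal lex order you actually wrote down, the initial ideal is a different (and in general much less understood) monomial ideal to which those theorems do not apply. Once you replace the stated order with the genuinely antidiagonal one, the rest of your argument is correct and coincides with the paper's proof.
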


\subsubsection*{Graded lower bound cluster algebras}
Cluster algebras are a class of combinatorially-defined commutative algebras that were introduced by S. Fomin and A. Zelevinsky at the turn of the century \cite{FZI}. \emph{Lower bound algebras}, introduced in \cite{BFZ} are related objects: each lower bound algebra is contained in an associated cluster algebra, and this containment is equality in certain cases (i.e. in the \emph{acyclic} setting, see \cite[Theorem 1.20]{BFZ}).

Each (skew-symmetric) lower bound algebra is defined from a quiver. Indeed, given a quiver $Q$, there is an associated polynomial ring $R_Q = \kappa[x_1,\dots, x_n,y_1,\dots, y_n]$ and ideal $K_Q\subseteq R_Q$ such that the lower bound algebra $\mathcal{L}_Q$ associated to $Q$ can be expressed as $\mathcal{L}_Q = R_Q/K_Q$. Fix the lexicographical monomial order with $y_1>\cdots>y_n>x_1>\cdots >x_n$. By \cite[Theorem 1.7]{MRZ} and the proof of \cite[Theorem 3.3]{MRZ}, $\text{in}_<K_Q$ is the Stanley--Reisner ideal of a simplicial complex $\Delta$ on the vertex set $\{y_1,\dots, y_n,x_1,\dots, x_n\}$, which has vertex decomposition compatible with $<$. Consequently, by Proposition \ref{prop:simplicial}, we have the following:

\begin{proposition}
The ideal $K_Q$ is geometrically vertex decomposable. When $K_Q$ is homogeneous, it is glicci.
\end{proposition}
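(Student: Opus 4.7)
The plan is to deduce both halves of the proposition directly from the framework assembled in Sections \ref{sect:GVD} and \ref{sect:GVDGlicci}, using the combinatorial input from \cite{MRZ} as a black box. The only real work is to verify that the hypotheses of Proposition \ref{prop:simplicial} and Corollary \ref{cor:AutomaticallyGlicci} are met; once that is done, the conclusions follow formally.

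First, I would record the key input taken from \cite{MRZ}: by \cite[Theorem 1.7]{MRZ} and the proof of \cite[Theorem 3.3]{MRZ}, the initial ideal $\text{in}_< K_Q$ (with respect to the lex order $y_1 > \cdots > y_n > x_1 > \cdots > x_n$) is a squarefree monomial ideal, namely the Stanley--Reisner ideal of a simplicial complex $\Delta$ on the vertex set $\{y_1,\dots,y_n,x_1,\dots,x_n\}$, and $\Delta$ admits a vertex decomposition that is compatible with this vertex order. Before invoking the machinery, I would briefly verify that the notion of ``vertex-decomposition compatible with the order'' used in \cite{MRZ} matches the notion of $<$-compatibly vertex decomposable used in our Definition preceding Proposition \ref{prop:simplicial}; this is essentially a matter of unpacking that at each step one splits the complex at the currently $<$-largest remaining vertex.

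For the first assertion (geometric vertex decomposability), I would then simply invoke Proposition \ref{prop:simplicial}: since $\text{in}_< K_Q = I_\Delta$ and $\Delta$ is $<$-compatibly vertex decomposable for the vertex order $y_1 > \cdots > y_n > x_1 > \cdots > x_n$, the proposition immediately gives that $K_Q$ is $<$-compatibly geometrically vertex decomposable, hence in particular geometrically vertex decomposable.

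For the second assertion (glicci when $K_Q$ is homogeneous), I would apply Corollary \ref{cor:AutomaticallyGlicci} directly; this corollary is tailor-made for this situation, asserting that a homogeneous ideal whose lex-initial ideal is the Stanley--Reisner ideal of a $<$-compatibly vertex decomposable complex is glicci. The main (and only) obstacle is really bookkeeping: making sure the precise notion of compatibility coming out of \cite{MRZ} lines up with the one used here, and that the hypothesis $\kappa$ infinite (standing throughout Section \ref{sect:GVDGlicci}) is in force so that Corollary \ref{weakGVDimpliesgliggi}, underpinning Corollary \ref{cor:AutomaticallyGlicci}, applies.
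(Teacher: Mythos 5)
Your proposal follows the same route as the paper: read off from \cite[Theorem 1.7]{MRZ} and the proof of \cite[Theorem 3.3]{MRZ} that $\text{in}_< K_Q$ is the Stanley--Reisner ideal of a $<$-compatibly vertex decomposable complex, then apply Proposition \ref{prop:simplicial} for geometric vertex decomposability and Corollary \ref{cor:AutomaticallyGlicci} for the glicci conclusion in the homogeneous case. The only difference is presentational — you spell out the bookkeeping (matching the compatibility notions and noting the standing hypothesis that $\kappa$ is infinite) that the paper leaves implicit.
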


\begin{remark}
It follows from \cite[Theorem 1.7]{MRZ} that $K_Q$ is homogeneous if and only if $Q$ has no \emph{frozen vertices} and $Q$ has exactly two arrows entering each vertex and two arrows exiting each vertex.
\end{remark}

\subsubsection*{Ideals defining equioriented type $A$ quiver loci}
Let $d_0, d_1, \dots, d_n$ be a sequence of positive integers and consider the product of matrix spaces $\texttt{Hom}$, and product of general linear group $\texttt{GL}$ defined as follows:
\[
\texttt{Hom}:=\oplus_{i = 1}^n \text{Mat}_{d_{i-1}\times d_i}(\kappa), \quad \texttt{GL}:=\oplus_{i = 0}^n \text{GL}_{d_i}(\kappa).
\]
The group $\texttt{GL}$ acts on $\texttt{Hom}$ on the right by conjugation: $(M_i)_{i=1}^n\bullet (g_i)_{i=0}^n = (g_{i-1}^{-1}M_i g_i)_{i=1}^n$. Closures of $\texttt{GL}$-orbits are called \textbf{equioriented type $A$ quiver loci}. \emph{Buchsbaum-Eisenbud varieties of complexes} are special cases of these quiver loci. An introduction to equioriented type $A$ quiver loci and related combinatorics can be found in \cite[Ch. 17]{MillerSturmfels}. 

\begin{proposition}
Equioriented type $A$ quiver loci are glicci. In particular, varieties of complexes are glicci. 
\end{proposition}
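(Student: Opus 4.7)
The plan is to reduce the statement to Proposition \ref{prop:SchubGlicci} via the Zelevinsky embedding, and then to invoke Corollary \ref{cor:AutomaticallyGlicci}. As recalled in the fifth bullet of the introduction, each equioriented type $A$ quiver locus ideal is, up to some extra indeterminate generators, a type $A$ Kazhdan--Lusztig ideal, and hence a Schubert determinantal ideal \cite[Ch. 17]{MillerSturmfels}. Concretely, one embeds $\texttt{Hom}$ into the space of $N\times N$ matrices with $N = d_0+d_1+\cdots+d_n$ by placing the generic matrix $M_i$ in the $i$-th superdiagonal $d_{i-1}\times d_i$ block, so that after augmenting the quiver locus ideal by the indeterminates corresponding to the entries outside of these superdiagonal blocks, one obtains precisely the Schubert determinantal ideal $I_w$ for the Zelevinsky permutation $w$ associated to the given rank conditions.

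Next I would verify that this augmented ideal falls within the scope of Corollary \ref{cor:AutomaticallyGlicci}. Under the antidiagonal lexicographic order used in the proof of Proposition \ref{prop:SchubGlicci}, the initial ideal of $I_w$ is the Stanley--Reisner ideal of a subword complex, which is $<$-compatibly vertex decomposable in the vertex order induced by the lex order on the variables (this is the Knutson--Miller shelling of subword complexes). The extra indeterminate generators only add isolated vertex-like generators to this Stanley--Reisner ideal, corresponding to coning off the relevant simplicial complex; choosing the lex order so that the extra variables come first, one sees that the augmented ideal still has initial ideal a Stanley--Reisner ideal of a $<$-compatibly vertex decomposable complex. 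Corollary \ref{cor:AutomaticallyGlicci} therefore shows that the augmented ideal is glicci.

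Finally, I would peel off the extra indeterminate generators. By Observation (2) of Subsection \ref{sect:GVDtoGlicci}, adjoining new variables together with those variables as generators preserves membership in the Gorenstein liaison class of a complete intersection in each direction of the elementary $G$-biliaison sequence; equivalently, the contraction of the augmented ideal back to the polynomial ring on the superdiagonal block entries is itself glicci. This contraction is exactly the defining ideal of the equioriented type $A$ quiver locus, proving the proposition. Varieties of complexes are a special case of equioriented type $A$ quiver loci (see \cite[Ch. 17]{MillerSturmfels}), so the ``in particular'' assertion follows immediately.

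The main obstacle is the bookkeeping in the first two paragraphs: verifying that the Zelevinsky embedding sends the quiver locus ideal precisely to the Schubert determinantal ideal plus the expected indeterminate generators, and choosing the antidiagonal lex order in a way that orders the extra indeterminate variables first while still producing a $<$-compatibly vertex decomposable simplicial complex on the full variable set. Once this compatibility is in place, the application of Corollary \ref{cor:AutomaticallyGlicci} is automatic.
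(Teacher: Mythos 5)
Your plan follows the same broad strategy as the paper (Zelevinsky embedding, then the subword complex and vertex decomposition machinery), but two steps need repair.

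First, a factual correction: the Zelevinsky embedding does not realize the augmented quiver locus ideal as a Schubert determinantal ideal $I_w$. It realizes it as a Kazhdan--Lusztig ideal $J$, which is a \emph{specialization} of a Schubert determinantal ideal (certain variables sent to $0$ or $1$), not merely an extension by extra indeterminate generators. In particular, the Gr\"obner degeneration and vertex decomposability result you need is not the Knutson--Miller subword complex result cited for Proposition \ref{prop:SchubGlicci}, but the Woo--Yong theorem for Kazhdan--Lusztig ideals, which is what the paper invokes. Once that substitution is made, your use of Corollary \ref{cor:AutomaticallyGlicci} to conclude that $J$ is glicci (and, in fact, $<$-compatibly geometrically vertex decomposable) is fine.

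Second, and more substantively, your final paragraph deduces ``$I(\Omega)$ is glicci'' from ``$J = I(\Omega)R + L$ is glicci'' by appealing to Observation (2) of Subsection \ref{sect:GVDtoGlicci}. But that observation only asserts the forward implication: a biliaison in $R$ induces a biliaison in $R[z]$ between ideals with $z$ adjoined. It does not let you contract a biliaison sequence for $J$ down to the subring $\kappa[\texttt{Hom}]$, since nothing guarantees that the intermediate links for $J$ have the form $\tilde{I}_j R + L$. The paper avoids this gap by transferring the geometrically vertex decomposable property rather than the glicci property. Since each generator of $L$ is an indeterminate lying in $J$, removing it is a degenerate step of Definition \ref{def:geometricallyVertexDec} with $C_{y,J} = \langle 1 \rangle$, and the $<$-compatible geometric vertex decomposition of $J$ descends to its contraction $I(\Omega)$ in $\kappa[\texttt{Hom}]$. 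One then applies Theorem \ref{thm:glicci} directly to $I(\Omega)$. You should run the peeling argument at the level of geometric vertex decomposition, not at the level of biliaison sequences.
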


\begin{proof} 
Let $\Omega\subseteq \texttt{Hom}$ be an equioriented type $A$ quiver locus, and let $I(\Omega)$ be its (homogeneous and prime) defining ideal in the polynomial ring $\kappa[\texttt{Hom}]$. It follows from results of A. Zelevinsky \cite{Zelevinsky} and V. Lakshmibai and P. Magyar \cite{LakshmibaiMagyar} that there
is a polynomial ring $R$ with $\kappa[\texttt{Hom}]\subseteq R$, a \emph{Kazhdan-Lusztig ideal} $J\subseteq R$, and an ideal $L$ generated by the indeterminates in $R\setminus \kappa[\texttt{Hom}]$ such that $J = I(\Omega)R+L$. (Here $I(\Omega)R$ denotes the extension of the ideal $I(\Omega)$ to $R$.) As shown in \cite{WooYongGrobner}, each Kazhdan-Lusztig ideal Gr\"obner degenerates to the Stanley--Reisner ideal of a subword complex, and this degeneration is compatible with the vertex decomposition of the complex. Consequently, $J$ is geometrically vertex decomposable. Thus $I(\Omega)$ is geometrically vertex decomposable, hence glicci. 
\end{proof}

\section{From G-biliaisons to geometric vertex decompositions}\label{sect:glicciGVD}
In this section, we give something of a converse to Theorem \ref{thm:onestep}. In that theorem, we showed that, under mild assumptions, a geometric vertex decomposition gives rise to an elementary $G$-biliaison and showed that the isomorphism of that elementary $G$-biliaison has a very particular form.  
In this section, we show that every elementary $G$-biliaison in which the isomorphism has the same form as the ones constructed in Theorem \ref{thm:onestep} gives rise to a geometric vertex decomposition.  The precise statement of the main theorem of this section is below. 
As usual, throughout this section we will let $R$ denote the polynomial ring $\kappa[x_1,\ldots,x_n]$.  

\begin{theorem}\label{thm:linkToGVD}
Let $I$, $C$, and $N \subseteq I \cap C$ be ideals of $R$, and let $<$ be a $y$-compatible term order.  Suppose that $I$ is squarefree in $y$ and that no term of any element of the reduced Gr\"obner basis of $N$ is divisible by $y$.  Suppose further that there exists an isomorphism $\phi: C/N \xrightarrow{f/g} I/N$ of $R/N$-modules for some $ f, g \in R$ not zero-divisors on $R/N$, and $\text{in}_y(f)/g = y$.  Then $\text{in}_y I = C \cap ( N+\langle y \rangle )$ is a geometric vertex decomposition of $I$.  
\end{theorem}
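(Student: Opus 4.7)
The plan is to identify $N = N_{y,I}$ and $C = C_{y,I}$, after which the decomposition can be checked directly. By Lemma \ref{lem:form}, squarefreeness of $I$ in $y$ yields a reduced Gr\"obner basis of the form $\{yq_1+r_1, \ldots, yq_k+r_k, h_1, \ldots, h_\ell\}$ with all $q_i, r_i, h_j$ in $T := \kappa[x_1, \ldots, \widehat{y}, \ldots, x_n]$, so $C_{y,I} = \langle q_i, h_j\rangle$ and $N_{y,I} = \langle h_j\rangle$. The assumption on the reduced Gr\"obner basis of $N$ forces $N = (N \cap T)R$, so $R/N \cong (T/(N \cap T))[y]$; in particular $y$ is a non-zero-divisor on $R/N$, and $\text{in}_y(n)\in N$ for every $n\in N$, i.e. $\text{in}_y N = N$. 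Since $\text{in}_y(f)$ is necessarily a single $y$-form $\beta y^e$, the hypothesis $\text{in}_y(f) = gy$ also forces $g = \alpha y^d$ and $f = \alpha y^{d+1}+f_1$ for some $\alpha\in T$, $d\geq 0$, and $f_1$ of $y$-degree at most $d$.

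The heart of the argument is to prove that each $h_j$ lies in $N$. Given $h_j \in I$, the isomorphism $\phi$ supplies $c_j \in C$ with $g h_j \equiv f c_j \pmod N$. When $\deg_y(c_j) > 0$, the right-hand side has strictly larger $y$-degree than the left, so the top $y$-form of $g h_j - f c_j$ equals $-\text{in}_y(f)\,\text{in}_y(c_j) = -gy\cdot \text{in}_y(c_j)$. This element belongs to $\text{in}_y N = N$, and since $gy$ is a non-zero-divisor on $R/N$, we deduce $\text{in}_y(c_j) \in N$. Replacing $c_j$ by $c_j - \text{in}_y(c_j) \in C$ yields another preimage of $h_j$ of strictly smaller $y$-degree. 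Iterating reduces to $c_j \in T$, where the same $y$-degree comparison gives $c_j \in N$, and hence $h_j \in N$ by the non-zero-divisor property of $g$. The reverse inclusion $N \subseteq \langle h_j \rangle$ reduces to $N \cap T \subseteq \langle h_j \rangle_T$: any $p \in N \cap T \subseteq I$ has $y$-free leading term, and by $y$-compatibility of $<$ no $yq_i + r_i$ can participate in its Gr\"obner reduction, so only the $h_j$ are used.

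An analogous $y$-degree reduction applied to the preimage $c_i$ of $yq_i + r_i$ shows $c_i \equiv q_i \pmod N$, so $\phi(q_i \bmod N) = (yq_i + r_i) \bmod N$. Since $I/N$ is generated as an $R/N$-module by the $yq_i + r_i$ (the $h_j$ being now trivial mod $N$), we conclude $C/N = \langle q_i \bmod N\rangle$, whence $C = \langle q_i\rangle + N = \langle q_i, h_j\rangle = C_{y,I}$. It then remains only to verify the identity $\text{in}_y I = C_{y,I} \cap (N_{y,I} + \langle y \rangle)$. The inclusion $\subseteq$ is immediate from \cite[Theorem 2.1]{KMY09}. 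For the reverse, an element $z = \sum \alpha_i q_i + \sum \beta_j h_j$ of $C_{y,I}$ lies in $N_{y,I}+\langle y\rangle$ iff, modulo $y$, $\sum \alpha_i(0) q_i \in \langle h_j\rangle_T$, say $\sum \alpha_i(0) q_i = \sum \eta_j h_j$ in $T$; writing $\alpha_i = \alpha_i(0) + y\alpha_i'$ and substituting then exhibits $z$ as $y\sum \alpha_i' q_i + \sum(\eta_j+\beta_j)h_j \in \langle yq_i, h_j\rangle = \text{in}_y I$.

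The chief difficulty is the iterated $y$-degree reduction used to establish $h_j \in N$: it requires combining the rigid structure of $f$ and $g$ forced by $\text{in}_y(f) = gy$, the stability $\text{in}_y N = N$ (supplied by the hypothesis on $N$'s Gr\"obner basis), and the non-zero-divisor conditions on $f$ and $g$, in order to push the $y$-leading part of the preimage $c_j$ into $N$ step by step until $c_j$ becomes an element of $T$.
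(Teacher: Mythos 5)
Your proof is correct and follows essentially the same approach as the paper's: both identify $N$ and $C$ with $N_{y,I}$ and $C_{y,I}$, respectively, by a $y$-degree reduction argument that uses the rigid form of $f$ and $g$ forced by $\text{in}_y(f)/g = y$, the invariance $\text{in}_y N = N$, and the non-zero-divisor hypotheses, before concluding via \cite[Theorem 2.1]{KMY09}. The only cosmetic differences are that you perform the reduction iteratively rather than by minimality-plus-contradiction, and you verify the intersection $\text{in}_y I = C_{y,I}\cap(N_{y,I}+\langle y\rangle)$ by hand in place of citing \cite[Theorem 2.1(b)]{KMY09}.
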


\begin{proof}
Recall that $I$ must have a Gr\"obner basis of the form $\{yq_1+r_1, \ldots, yq_k+r_k, h_1, \ldots, h_\ell\}$ where $y$ does not divide any term of any $q_i$ or $r_i$ for any $1 \leq i \leq k$ nor any $h_j$ for any $1 \leq j \leq \ell$ because $I$ is squarefree in $y$.  Hence, $\text{in}_y I = \langle yq_1, \ldots, yq_k, h_1, \ldots, h_\ell \rangle$, and this generating set is a Gr\"obner basis of $\text{in}_y I$.

We claim first that $N = \langle h_1, \ldots, h_\ell \rangle$.  Because no term of any element of the reduced Gr\"obner basis of $N \subseteq I$ is divisible by $y$, each such element must be a polynomial in the $h_i$ for $1 \leq i \leq \ell$, and so $N \subseteq \langle h_1, \ldots, h_\ell \rangle$, from which it follows that $\text{in}_y(N) = N$ and that $y$ is not a zero-divisor on $R/N$.  Conversely, suppose there is some $h_i \in I \setminus N$ for some $1 \leq i \leq \ell$.  Then there exists some $c \in C \setminus N$ and $n \in N$ such that $c f = gh_i+n$, where $c$ has been chosen to have the smallest possible $d$ for which $y^d$ divides $\text{in}_y(c)$. Taking initial $y$-forms yields
\[
\text{in}_y(c)yg = \text{in}_y(c)\cdot \text{in}_y(f)  =\text{in}_y(c f)  = \text{in}_y(gh_i+n).
\]
Note that $\text{in}_y g = g$ by the assumption that $g$ divides $\text{in}_y f$ and that, if $g \in \langle y^m \rangle \setminus \langle y^{m+1} \rangle$, then $gh_i \in \langle y^m \rangle \setminus \langle y^{m+1} \rangle$ while $\text{in}_y(c)yg  \in \langle y^{m+1} \rangle$. Thus, we see that $\text{in}_y(c)yg = \text{in}_y(n)$. Hence, $\text{in}_y(c)yg\in N$ (as $\text{in}_y N = N$) and so $\text{in}_y(c)\in N$ (as $N:\langle yg\rangle = N$).

Set $c' = c-\text{in}_y(c) \in C \setminus N$, which has an initial $y$-form not divisible by $y^d$.  But $\varphi(c'+N) = \varphi(c+N) = h_i+N$, contradicting minimality of $d$.  Hence, $N = \langle h_1, \ldots, h_\ell \rangle$.  

Next, we claim that $C = \langle q_1, \ldots, q_k, h_1, \ldots, h_\ell \rangle$.  By assumption, $N \subseteq C$, and so it suffices to show that the $q_i$ for $1 \leq i \leq k$ generate $C$ over $N$.  In order to establish this, we will show that $\psi: C/N \xrightarrow{y} \text{in}_y(I)/N$ is an isomorphism.  For each $1 \leq i \leq k$, let $c_i \in C \setminus N$ be a representative of the preimage under $\phi$ of the class of $yq_i+r_i$ in $I/N$ with the smallest possible $d_i$ so that $\text{in}_y(c_i) \notin \langle y^{d_i} \rangle$.  We will show that the image under $\psi$ of the class of $c_i$ is the class of $yq_i$.

First, we will show that $d_i = 1$ for all $1 \leq i \leq k$.  Suppose, for contradiction, that some $d_i > 1$, i.e., that some $\text{in}_y(c_i) \in \langle y \rangle$.  Then $\text{in}_y(g(yq_i+r_i)+n_i) = y \cdot \text{in}_y(fc_i)\in \langle y^2\rangle$.  Because neither $q_i$ nor $r_i$ has any term divisible by $y$, we must have \[
y g \text{in}_y(c_i) = \text{in}_y\left(g(yq_i+r_i)+n_i\right) = \text{in}_y(n_i) \in \text{in}_y (N) = N,
\] and so $\text{in}_y(c_i) \in N$.  Then $c_i' = c_i-\text{in}_y(c_i) \in C\setminus N$ and $\text{in}_y(c_i') \notin \langle y^{d_i} \rangle$.  But still $c_i'$ represents a preimage of $yq_i+r_i$, contradicting minimality of $d_i$.  Hence, $\text{in}_y(c_i) \notin \langle y \rangle$, which establishes that $\text{in}_y(c_i) = c_i$.

From the former fact and the relationship $y g \text{in}_y(c_i) = \text{in}_y(g(yq_i+r_i)+n_i)$, we have either $y g c_i = gyq_i$ (if $\text{in}_y(g(yq_i+r_i)+n_i) = \text{in}_y(g(yq_i+r_i))$) or $y g c_i = gyq_i+gyn_i'$ for some nonzero $n_i' \in N$ (using $N:\langle yg\rangle = N$ and $\text{in}_y(N) = N$). In either case, the $\psi(c_i)$ is the class of $yq_i$ in $\text{in}_y(I)/N$, which is to say that $\psi$ is surjective.  Also, $\psi$ is injective because $y$ is not a zero-divisor on $R/N$.  Now because the $yq_i$ for $1 \leq i \leq k$ generate $\text{in}_y(I)$ over $N$ and $\psi$ is an isomorphism under which the preimage of the class of $yq_i$ is the class of $q_i$ for each $1 \leq i \leq k$, it must be that $C$ is generated over $N$ by $\{q_1, \ldots, q_k\}$ and that $C = \langle q_1, \ldots, q_k, h_1, \ldots, h_\ell \rangle$.

By \cite[Theorem 2.1(a)] {KMY09}, the specified generating sets for $\text{in}_y(I)$, $N$, and $C$ are all Gr\"obner bases for them, and so it follows from \cite[Theorem 2.1(b)]{KMY09} that $\text{in}_y(I) = C \cap (N+\langle y \rangle)$ is a geometric vertex decomposition of $I$.  
\end{proof}

\begin{example}\label{ex:standard}
To illustrate this correspondence between elementary $G$-biliaison and geometric vertex decomposition, we consider a classical example.  If $I$ is the ideal of $2$-minors of the matrix $M = \begin{pmatrix}
x_{11} & x_{12} & x_{13} \\
x_{21} & x_{22} & x_{23} 
\end{pmatrix}$, $C = \langle x_{11}, x_{12} \rangle$, $N = \langle x_{22}x_{11}-x_{21}x_{12} \rangle$, $f = x_{23}x_{12}-x_{22}x_{13}$, and $g = x_{12}$ in $\kappa[x_{11}, \ldots, x_{23}]$, then the multiplication by $f/g$  map $[C/N](-1) \xrightarrow{f/g} I/N$ gives an elementary $G$-biliaison.  Using any lexicographic order with $x_{23}$ largest, we take $C = C_{x_{23},I}$ and $N = N_{x_{23},I}$, and then $\text{in}_{x_{23}}(I) = C \cap (N+\langle x_{23} \rangle)$ is a geometric vertex decomposition.
\end{example}

Notice that in Theorem \ref{thm:linkToGVD} we use only the isomorphism that makes up an elementary $G$-biliaison to construct a geometric vertex decomposition in the sense of \cite{KMY09}.  In this direction, we do not need to assume that the ideals $I$, $C$, and $N$ are homogeneous or saturated or even unmixed, nor that $N$ is Cohen--Macaulay or $G_0$.  Of course, the isomorphism $\phi$ increases degree by $\deg(y)$ whenever that makes sense.

\begin{remark}\label{rmk:invertible}
In the notation and under the hypotheses of Theorem \ref{thm:onestep}, the construction in Theorem \ref{thm:onestep} produces an isomorphism $I/N_{y,I} \xrightarrow{v/u} C_{y,I}/N_{y,I}$ with $\dfrac{\text{in}_{y}(v)}{u} = y$.  In particular, the hypotheses of Theorem \ref{thm:linkToGVD} are satisfied.  It is not hard to see that the geometric vertex decomposition produced by Theorem \ref{thm:linkToGVD} is the same one assumed before applying Theorem \ref{thm:onestep}.

If we begin, instead, with an isomorphism between $I/N$ and $C/N$ and accompanying hypotheses of Theorem \ref{thm:linkToGVD}, we may first apply Theorem \ref{thm:linkToGVD} to obtain a geometric vertex decomposition satisfying the hypotheses of Theorem \ref{thm:onestep}.  If we then apply the construction in Theorem \ref{thm:onestep}, we obtain an isomorphism between $I/N$ and $C/N$, but it need not be the same isomorphism we began with.   For example, we may begin with the multiplication by $f/g$ map from Example \ref{ex:standard} but produce the multiplication by $v/u$ map for $v = a_1f+a_2(x_{23}x_{11}-x_{21}x_{13})$ and $u = a_1x_{12}+a_2x_{11}$ for a generic choice of scalars $a_1$ and $a_2$.
\end{remark}

One has to be quite careful in tracking the correspondence between a particular biliaison and a geometric vertex decomposition.  In particular, somewhat surprisingly, the condition that the reduced Gr\"obner basis of $N$ has no term divisible by $y$ cannot be discarded while preserving the canonical mapping noted in Remark \ref{rmk:invertible}.  For example, we consider a modification of Example \ref{ex:standard} by letting $I'  = I+\langle x_{23}x_{10}-x_{13}x_{20} \rangle$, $N' = N+\langle x_{23}x_{10}-x_{13}x_{20} \rangle$, and $C' = C+\langle x_{23}x_{10}-x_{13}x_{20} \rangle$.  We think of this example as naturally occurring from the matrix $M' = \begin{pmatrix}  
x_{10} & x_{11} & x_{12} & x_{13} \\
x_{20} & x_{21} & x_{22} & x_{23} 
\end{pmatrix}$, from which the ideal $I'$ is generated by all $2$-minors involving any 2 of the last 3 columns or exactly the first and fourth columns.  Here, taking $f' = x_{23}x_{12} - x_{22}x_{13} \in I'\setminus N'$ and $g' = x_{12} \in C' \setminus N'$ yields an isomorphism $C'/N' \xrightarrow{f'/g'} I'/N'$.  Taking lexicographic order with respect to $x_{23}>x_{13}>x_{22}>\cdots>x_{10}$ and noting that $N'$ is prime, it is not hard to check that the hypotheses of Theorem \ref{thm:linkToGVD} are satisfied aside from the hypothesis that the reduced Gr\"obner basis of $N'$ have no term divisible by $x_{23}$.  However, the geometric vertex decomposition of $I'$ with respect to $x_{23}$ is \begin{align*}
&\langle x_{21}x_{13}x_{10}-x_{20}x_{13}x_{11}, x_{22}x_{11}-x_{21}x_{12}, x_{22}x_{13}x_{10}-x_{20}x_{13}x_{12}, x_{23}x_{10}, x_{23}x_{11}, x_{23}x_{12} \rangle= \\
&\langle x_{10}, x_{11}, x_{12} \rangle \cap (  \langle x_{21}x_{13}x_{10}-x_{20}x_{13}x_{11}, x_{22}x_{11}-x_{21}x_{12}, x_{22}x_{13}x_{10}-x_{20}x_{13}x_{12}\rangle+\langle x_{23} \rangle ).
\end{align*} In particular, $\text{in}_{x_{23}}  I'  \neq C' \cap (N'+\langle x_{23} \rangle )$.  

In the other direction, the elementary $G$-biliaison constructed from Theorem \ref{thm:onestep} yields the isomorphism $C'/\tilde{N} \xrightarrow{f'/g'} I'/\tilde{N}$ for \[
\tilde{N} = (x_{21}x_{13}x_{10}-x_{20}x_{13}x_{11}, x_{22}x_{11}-x_{21}x_{12}, x_{22}x_{13}x_{10}-x_{20}x_{13}x_{12}),
\] which is not the same elementary $G$-biliaison we began with.

Remark \ref{rmk:invertible} gives rise to the question of whether or not there is a sort of moving lemma applicable to this situation that would allow us to replace the module $N$ with a Cohen--Macaulay and $G_0$ module $\tilde{N}$ that also links $C$ to $I$ but does not involve $y$.  More precisely:

\begin{question} With notation as in Theorem \ref{thm:linkToGVD}, suppose that $I$ is squarefree in $y$ and that there exists an elementary $G$-biliaison given by the isomorphism $\phi: C/N \xrightarrow{f/g} I/N$ of $R/N$-modules for some $ f \in I$, $g \in C$, and $\text{in}_y(f)/g = y$.  Do not assume that the reduced Gr\"obner basis of $N$ does not involve $y$.  From \cite[Theorem 2.1(b)]{KMY09}, $I$ must have some geometric vertex decomposition with respect to $y$.  If $\text{in}_y(I) = \tilde{C} \cap (\tilde{N}+\langle y \rangle )$ is a geometric vertex decomposition of $I$, then Theorem \ref{thm:onestep} requires that there be an isomorphism $\tilde{C}/\tilde{N} \rightarrow I/\tilde{N}$.  In particular, though, will multiplication by $f/g$ always be an isomorphism from $C/\tilde{N}$ to $I/\tilde{N}$?  Need $\tilde{N}$ be Cohen--Macaulay and $G_0$?
\end{question}

\section{The mixed case and sequential Cohen--Macaulayness}\label{sect:nonpure}

A nonpure version of vertex decomposition was introduced in \cite{BW97}, in which the authors study non-pure shellable complexes, including their homotopy types and combinatorially significant direct sum decompositions of their Stanley--Reisner rings.  It has been shown that if a simplicial complex is non-pure vertex decomposable, then it is non-pure shellable \cite[Theorem 11.3]{BW97}.  And it is not hard to see that a non-pure shellable simplicial complex is sequentially Cohen--Macaulay (i.e., its associated Stanley--Reisner ring is sequentially Cohen--Macaulay).  For background on sequential Cohen--Macaulayness, introduced by Stanley, we refer the reader to \cite[Section III.2]{Sta96}.  This story parallels the well-known history of the pure case, which is summarized in Section \ref{sect:GVD}.  This non-pure version has been applied particularly effectively in the study of edge ideals (see \cite{FVT07, VTH08, FH08, Woo09}).

In this section, we compare non-pure vertex decomposition with geometric vertex decomposition when $I$ is not necessarily unmixed, and we describe how geometric vertex decomposition can transfer the structure of sequential Cohen--Macaulayness in a manner similar to how $G$-biliaison transfers Cohen--Macaulayness in the unmixed case.  This result is stated precisely as Theorem \ref{thm:SCM}.  Throughout this section, we will assume that $\kappa$ is infinite, and we will let $R = \kappa[x_1,\ldots, x_n]$ with the standard grading.

We begin with the definition of a vertex decomposable complex when the complex not necessarily pure.

\begin{definition}\label{def:nonpurevd}\cite[Definition 11.1]{BW97}
A simplicial complex $\Delta$ is \textbf{vertex decomposable} if \begin{enumerate}
\item $\Delta$ is a simplex or $\Delta = \{\emptyset\}$, or 
\item there exists a vertex $v$ of $\Delta$ such that \begin{enumerate}
\item $\text{del}_\Delta(v)$ and $\text{lk}_\Delta(v)$ are vertex-decomposable and
\item no facet of $\text{lk}_\Delta(v)$ is a facet of $\text{del}_\Delta(v)$. 
\end{enumerate}
\end{enumerate}
A vertex $v$ as in condition (2) is called a \textbf{shedding vertex}.
\end{definition}

Let $\Delta$ be a simplicial complex on $[n]$ and $I_\Delta\subseteq R$ its Stanley--Reisner ideal. While any variable $y\in R\setminus I_\Delta$ that divides a minimal generator of $I_\Delta$ gives rise to a nondegenerate geometric vertex decomposition of $I_\Delta$ (see Definition \ref{def:gvdKMS}), $y$ need not correspond to a shedding vertex of $\Delta$. For example, if $I = (xy,xz)$, then $I = \text{in}_y I = C_{y,I} \cap (N_{y,I} +\langle y \rangle) = \langle x \rangle \cap \langle xz, y \rangle$ would be a geometric vertex decomposition, but $y$ is not a shedding vertex of $\Delta = \{\{x\}, \{y,z\}\}$ because $\{z\}$ is a facet of $\text{del}_\Delta(y) = \{\{z\}, \{x\}\}$ that is also a facet of $\text{lk}_\Delta(y)= \{\{z\}\}$.   In order to prevent an ideal from being geometrically vertex decomposable via nondegenerate geometric vertex decompositions at variables that do not correspond to shedding vertices, we propose an alternative definition of geometric vertex decomposition:

\begin{altdefinition}\label{def:nonpureGVD}
If \begin{enumerate} 
\item $\text{in}_y I = C_{y,I} \cap (N_{y,I}+\langle y \rangle)$, and
\item either $\sqrt{C_{y,I}} = \sqrt{N_{y,I}}$ or no minimal prime of $C_{y,I}$ is a minimal prime of $N_{y,I}$,
\end{enumerate} 
then we say that $\text{in}_y I = C_{y,I} \cap (N_{y,I}+\langle y \rangle)$ is a \textbf{geometric vertex decomposition of $I$ with respect to $y$}.
\end{altdefinition}

As in the original definition, we will call this geometric vertex decomposition \textbf{nondegenerate} if $C_{y,I} \neq \langle 1 \rangle$ and if $\sqrt{C_{y,I}} \neq \sqrt{N_{y,I}}$.  

In the unmixed case,  if $I$ is geometrically vertex decomposable and one step in that decomposition is a nondegenerate geometric vertex decomposition with respect to $y$, it is automatic that the minimal primes of $N_{y,I}$ and $C_{y,I}$ must be disjoint because the minimal primes of the former must all have height one less than those of the latter in virtue of Lemma \ref{lem:height} and unmixedness of $N_{y,I}$ and $C_{y,I}$.  Reinterpreting Definition \ref{def:geometricallyVertexDec} in terms of Definition \ref{def:nonpureGVD},  it is a straightforward exercise to see that a squarefree monomial ideal is geometrically vertex decomposable exactly when its Stanley--Reisner complex is vertex decomposable in the sense of Definition \ref{def:nonpurevd}.


We will now describe how geometric vertex decomposition behaves somewhat analogously to $G$-biliaison in the not necessarily unmixed case.  In particular, we will show in Theorem \ref{thm:SCM} that if $I$ is homogeneous and $R/N_{y,I}$ is Cohen-Macaulay, then $R/I$ is sequentially Cohen-Macaulay if and only if $R/C_{y,I}$ is sequentially Cohen-Macaulay. 
Just as in $G$-biliaison, in which $R/N_{y,I}$ is required to be not only Cohen--Macaulay but also $G_0$ in order to transfer the Cohen-Macaulay property between $R/C_{y,I}$ and $R/I$, we impose a stricter requirement on $R/N_{y,I}$ in Theorem \ref{thm:SCM} than the property we hope to pass between $R/I$ and $R/C_{y,I}$.  As in the unmixed case, we begin with a lemma concerning the heights of the ideals involved: 

\begin{lemma}\label{lem:nonpureheight}
If $I \subseteq R$ is a homogeneous ideal with nondegenerate geometric vertex decomposition $\text{in}_y I = C_{y,I} \cap (N_{y,I} + \langle y \rangle)$ in the sense of Definition \ref{def:nonpureGVD}, then $\hgt(I) = \hgt(N_{y,I})+1 \leq \hgt(C_{y,I})$.  
\end{lemma}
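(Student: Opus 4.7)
The plan is to reduce the height computation to one about $\text{in}_y I$ and then use the decomposition together with the nondegeneracy hypothesis to control the heights of the two components.

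First, since $\text{in}_y I$ arises from a flat degeneration (a one-parameter family by the weight vector assigning weight $1$ to $y$ and $0$ to all other variables), we have $\hgt(I) = \hgt(\text{in}_y I)$. From the decomposition $\text{in}_y I = C_{y,I} \cap (N_{y,I} + \langle y \rangle)$, every minimal prime of $\text{in}_y I$ is a minimal prime of either $C_{y,I}$ or of $N_{y,I} + \langle y \rangle$, so
\[
\hgt(\text{in}_y I) = \min\bigl(\hgt(C_{y,I}),\, \hgt(N_{y,I}+\langle y \rangle)\bigr).
\]

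Next I would show $\hgt(N_{y,I}+\langle y \rangle) = \hgt(N_{y,I})+1$. This uses the fact (noted just after Definition \ref{def:gvdKMS}) that the generators of $N_{y,I}$ can be chosen to omit $y$. Hence $y$ is a non-zero-divisor modulo any minimal prime of $N_{y,I}$, so $y$ lies outside every minimal prime of $N_{y,I}$, and Krull's principal ideal theorem gives the claimed increase of height by exactly one.

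The main point is then to show $\hgt(C_{y,I}) \geq \hgt(N_{y,I})+1$. By nondegeneracy in the sense of Alternative Definition \ref{def:nonpureGVD}, we have $\sqrt{C_{y,I}} \neq \sqrt{N_{y,I}}$, so clause (2) of that definition forces no minimal prime of $C_{y,I}$ to be a minimal prime of $N_{y,I}$. Since $N_{y,I} \subseteq C_{y,I}$, each minimal prime of $C_{y,I}$ must contain some minimal prime of $N_{y,I}$, and by the above this containment must be strict. This strict containment gives $\hgt(C_{y,I}) > \hgt(N_{y,I})$, i.e., $\hgt(C_{y,I}) \geq \hgt(N_{y,I})+1$. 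Combining the estimates yields $\hgt(\text{in}_y I) = \hgt(N_{y,I})+1 \leq \hgt(C_{y,I})$, and hence $\hgt(I) = \hgt(N_{y,I})+1 \leq \hgt(C_{y,I})$, as desired.

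The main (but mild) obstacle here is verifying that clause (2) of Alternative Definition \ref{def:nonpureGVD} genuinely rules out the possibility that some minimal prime of $C_{y,I}$ coincides with one of $N_{y,I}$; without this nondegeneracy condition the inequality $\hgt(C_{y,I}) \geq \hgt(N_{y,I})+1$ could fail, as it does in the degenerate case $\sqrt{C_{y,I}} = \sqrt{N_{y,I}}$. Everything else is bookkeeping with minimal primes and the observation that the generators of $N_{y,I}$ omit $y$.
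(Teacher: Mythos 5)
Your proof is correct and takes essentially the same route as the paper: pass from $I$ to $\text{in}_y I$, express $\hgt(\text{in}_y I)$ as $\min\{\hgt(C_{y,I}),\,\hgt(N_{y,I})+1\}$, and use clause (2) of Alternative Definition~\ref{def:nonpureGVD} (forced into effect by nondegeneracy) to show no minimal prime of $C_{y,I}$ equals one of $N_{y,I}$. The only cosmetic difference is that you argue directly that each minimal prime of $C_{y,I}$ properly contains one of $N_{y,I}$, whereas the paper packages the same observation as a proof by contradiction.
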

\begin{proof}
Because $\hgt(I) = \hgt(\text{in}_y I)$, it suffices to show that $\hgt(\text{in}_y I) = \hgt(N_{y,I})+1$.  Because every prime containing $\text{in}_y I$ must contain either $C_{y,I}$ or $N_{y,I}+\langle y \rangle$, we must have \[
\hgt(\text{in}_y I) = \min\{\hgt(C_{y,I}), \hgt(N_{y,I}+\langle y \rangle)\} =  \min\{\hgt(C_{y,I}), \hgt(N_{y,I})+1\}.
\]  Suppose $\hgt(C_{y,I})<\hgt(N_{y,I})+1$.  Then, because $N_{y,I} \subseteq C_{y,I}$, we must have $\hgt(N_{y,I}) = \hgt(C_{y,I})$.  Fix $P \in \text{Min}(C_{y,I})$ with $\hgt(P) = \hgt(C_{y,I})$.  Then $N_{y,I} \subseteq P$, and there cannot be a prime $Q \subsetneq P$ with $Q \in \text{Ass}(N_{y,I})$ or else we would have $\hgt(N_{y,I})<\hgt(C_{y,I})$, so $P \in \text{Min}(N_{y,I})$ as well, contradicting condition (2) of Definition \ref{def:nonpureGVD}.  Hence, we must have \[
\hgt(I) = \hgt(\text{in}_y I) = \hgt(N_{y,I})+1 \leq \hgt(C_{y,I}).\qedhere
\]
\end{proof}

Unlike in the unmixed case, we cannot hope to give an upper bound on the height of $C_{y,I}$ in terms of the heights of $I$ and $N_{y,I}$.  For example, if $I = (yx_1, \ldots, yx_d)$ for any $d \geq 1$, then $\hgt(C_{y,I}) = \hgt(\langle x_1, \ldots, x_d \rangle) = d$ while $\hgt(I) = 1 = \hgt( 0+\langle y \rangle) = \hgt(N_{y,I}+\langle y \rangle)$.

\begin{lemma}\label{lem:nonpureonestep}
Suppose that $I$ is a homogeneous ideal of $R$ and that $I$ possesses a nondegenerate geometric vertex decomposition (in the sense of Definition \ref{def:nonpureGVD}) with respect to a variable $y = x_j$ of $R$.  If $N_{y,I} $ has no embedded primes, then there is an isomorphism $I/N_{y,I} \cong [C_{y,I}/N_{y,I}](-1)$ as graded $R/N_{y,I}$-modules.  
\end{lemma}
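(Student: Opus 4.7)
The plan is to mirror the argument of Theorem \ref{thm:onestep} essentially verbatim, using Lemma \ref{lem:nonpureheight} in place of Lemma \ref{lem:height} and the "no embedded primes" hypothesis in place of unmixedness of $N_{y,I}$. Since condition (1) of Alternative Definition \ref{def:nonpureGVD} coincides with Definition \ref{def:gvdKMS}, Lemma \ref{lem:form} still applies, and we may fix a $y$-compatible term order giving a reduced Gr\"obner basis of $I$ of the form $\{yq_1+r_1,\dots,yq_k+r_k,h_1,\dots,h_\ell\}$, with $y$ dividing no term of any $q_i$, $r_i$, or $h_j$. Then $C_{y,I} = \langle q_1,\dots,q_k,h_1,\dots,h_\ell\rangle$ and $N_{y,I} = \langle h_1,\dots,h_\ell\rangle$, and the containment $N_{y,I} \subseteq I \cap C_{y,I}$ is immediate.

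Next, I would produce non-zero-divisors on $R/N_{y,I}$ of the shape used in Theorem \ref{thm:onestep}. The "no embedded primes" hypothesis gives $\mathrm{Ass}(N_{y,I})=\mathrm{Min}(N_{y,I})$, so it suffices to avoid minimal primes. For any minimal prime $Q$ of $N_{y,I}$, if $\langle q_1,\dots,q_k\rangle \subseteq Q$ then $C_{y,I}\subseteq Q$, so $Q$ contains some minimal prime $P$ of $C_{y,I}$; but Lemma \ref{lem:nonpureheight} forces $\hgt(P)\geq \hgt(C_{y,I})\geq \hgt(I)=\hgt(N_{y,I})+1>\hgt(Q)$, a contradiction. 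Similarly, $\langle yq_1+r_1,\dots,yq_k+r_k\rangle\subseteq Q$ would force $I\subseteq Q$, with the same height contradiction. Because $\kappa$ is infinite, general scalars $a_1,\dots,a_k$ give $u=\sum a_i q_i$ and $v=\sum a_i(yq_i+r_i)$ that are non-zero-divisors on $R/N_{y,I}$.

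Then I would define $\phi\colon C_{y,I}\to I/N_{y,I}$ by $f\mapsto fv/u$ exactly as in Theorem \ref{thm:onestep}; well-definedness, surjectivity, and the identification of the kernel with $N_{y,I}$ use only that $u$ and $v$ are non-zero-divisors modulo $N_{y,I}$, that $\mathcal{G}$ is a Gr\"obner basis with respect to a $y$-compatible order, and that $y$ does not divide the terms of the $q_i$ or $r_i$ (so that the witness expression $r_i(\sum a_j q_j)-q_i(\sum a_j r_j)\in I$ reduces mod $\mathcal{G}$ using only the $h_j$ and hence lies in $N_{y,I}$). Homogeneity of $I$ makes $N_{y,I}$, $C_{y,I}$, and all the $q_i$, $r_i$, $h_j$ homogeneous, and the map $\phi$ increases degree by $\deg(y)=1$, giving the desired graded isomorphism $[C_{y,I}/N_{y,I}](-1)\cong I/N_{y,I}$.

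The main obstacle is verifying that the height and primality bookkeeping still delivers the needed non-zero-divisors once unmixedness of $N_{y,I}$ is replaced by the weaker assumption of no embedded primes: the failure mode is that a minimal prime of $C_{y,I}$ might sit strictly inside a minimal prime of $N_{y,I}$, and this is precisely the scenario ruled out by the inequality $\hgt(C_{y,I})\geq \hgt(N_{y,I})+1$ coming from Lemma \ref{lem:nonpureheight} together with condition (2) of Alternative Definition \ref{def:nonpureGVD}. Once this is in hand, the remainder of the argument is a direct transcription of the unmixed case.
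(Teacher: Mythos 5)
Your overall plan is right, and most of the transcription from Theorem~\ref{thm:onestep} goes through unchanged; the issue is in the one place you explicitly changed something, namely the argument that no minimal prime $Q$ of $N_{y,I}$ contains $\langle q_1,\dots,q_k\rangle$ (or $\langle yq_1+r_1,\dots,yq_k+r_k\rangle$). Your chain of inequalities ends with $\hgt(N_{y,I})+1>\hgt(Q)$, which needs $\hgt(Q)\le\hgt(N_{y,I})$. But $\hgt(N_{y,I})$ is the \emph{minimum} of the heights of the minimal primes of $N_{y,I}$, and once unmixedness is dropped (even with no embedded primes) the minimal primes can have different heights. Concretely, if $N_{y,I}=\langle xy, xz\rangle$, its minimal primes are $\langle x\rangle$ of height $1$ and $\langle y,z\rangle$ of height $2$, but $\hgt(N_{y,I})=1$. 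A minimal prime $Q$ with $\hgt(Q)>\hgt(N_{y,I})$ breaks the inequality, so no contradiction is reached and the non-zero-divisor construction is not justified. Lemma~\ref{lem:nonpureheight} alone cannot rescue this, because in the non-unmixed case it gives a lower bound on $\hgt(C_{y,I})$, not an upper bound on the heights of individual minimal primes of $N_{y,I}$.

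The fix the paper uses is purely order-theoretic and does not go through heights at all. If $\langle q_1,\dots,q_k\rangle\subseteq Q$, then $C_{y,I}\subseteq Q$; and since $N_{y,I}\subseteq C_{y,I}\subseteq Q$ with $Q$ minimal over $N_{y,I}$, there is no prime strictly between $N_{y,I}$ and $Q$, so $Q$ is also minimal over $C_{y,I}$. That makes $Q$ a shared minimal prime of $C_{y,I}$ and $N_{y,I}$, directly contradicting condition (2) of Alternative Definition~\ref{def:nonpureGVD}. For the second family $\langle yq_1+r_1,\dots,yq_k+r_k\rangle$, one first observes that since $N_{y,I}$ is generated in $\kappa[x_1,\dots,\widehat{y},\dots,x_n]$, its minimal primes are extended from primes of that subring; hence $y\notin Q'$ and $yq_i+r_i\in Q'$ forces $q_i\in Q'$, reducing to the first case. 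Once this replacement is made, the rest of your proposal (the general-position choice of $u$ and $v$, the map $f\mapsto fv/u$, identification of the kernel, and the degree bookkeeping) is exactly the paper's argument and is correct.
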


\begin{proof}
We will modify the proof of Theorem \ref{thm:onestep}.  As there, we have a reduced Gr\"obner basis $\{yq_1+r_1,\dots, yq_k+r_k,h_1,\dots, h_\ell\}$ for $I$, and we let $C = C_{y,I} = \langle q_1,\dots, q_k, h_1,\dots, h_\ell\rangle$ and $N = N_{y,I} = \langle h_1,\dots, h_\ell \rangle$. The modification in the argument comes in the steps showing that neither $C$ nor $I$ is contained in any minimal prime of $N$.   Suppose first that $\langle q_1, \ldots, q_k \rangle \subseteq Q$ for some minimal prime $Q$ of $N$.  Then also $C \subseteq Q$.  Because $N \subseteq C$ and $Q$ is minimal over $N$, $Q$ must also be minimal over $C$, and so $N$ and $C$ share a minimal prime, in violation of Definition \ref{def:nonpureGVD}.  

Similarly, suppose $\langle yq_1+r_1, \ldots, yq_k+r_k \rangle \subseteq Q'$ for some minimal prime $Q'$ of $N$.  Then $I \subseteq Q'$.  But $N$ has a generating set that does not involve $y$, and so its minimal primes may be viewed as ideals of the ring $\kappa[x_1, \ldots, \hat{y}, \dots, x_n]$.  Then $I \subseteq Q'$ implies that each $yq_i \in Q'$, hence each $q_i \in Q'$.  But then again $N$ and $C$ share a minimal prime, in violation of Definition \ref{def:nonpureGVD}.  

Hence, neither $\langle q_1, \ldots, q_k \rangle$ nor $\langle yq_1+r_1, \ldots, yq_k+r_k \rangle$ is contained in any minimal prime of $N$.  Because $N$ has no embedded primes, it follows that neither $\langle q_1, \ldots, q_k \rangle$ nor $\langle yq_1+r_1, \ldots, yq_k+r_k \rangle$ is contained in any associated prime of $N$.  We now follow the remainder of the argument of Theorem \ref{thm:onestep}.
\end{proof}

\begin{theorem}{\label{thm:SCM}}
Let $I \subseteq R$ be a homogeneous ideal and $\text{in}_y I = C_{y,I} \cap (N_{y,I} + \langle y \rangle)$ a geometric vertex decomposition (in the sense of Definition \ref{def:nonpureGVD}).  If $R/N_{y,I}$ is Cohen--Macaulay, then $R/I$ is sequentially Cohen--Macaulay if and only if $R/C_{y,I}$ is.  
\end{theorem}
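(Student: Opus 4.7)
My plan is to split the argument along the degenerate/nondegenerate dichotomy in Definition \ref{def:nonpureGVD}. In the degenerate cases, the conclusion follows from quick direct arguments. If $C_{y,I} = \langle 1 \rangle$, then $\text{in}_y I = N_{y,I} + \langle y \rangle$, and since the generators of $N_{y,I}$ are independent of $y$, the variable $y$ is a nonzerodivisor on the Cohen--Macaulay ring $R/N_{y,I}$, so $R/\text{in}_y I$ is Cohen--Macaulay; by upper semicontinuity of depth along the associated Gr\"obner flat family, $R/I$ is also Cohen--Macaulay (hence sCM), while $R/C_{y,I} = 0$ is vacuously sCM. The case $\sqrt{C_{y,I}} = \sqrt{N_{y,I}}$ admits a parallel argument using the Mayer--Vietoris sequence for $\text{in}_y I = C_{y,I} \cap (N_{y,I}+\langle y\rangle)$ and the fact that $R/(N_{y,I}+\langle y \rangle)$ is Cohen--Macaulay.

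In the nondegenerate case, Lemma \ref{lem:nonpureonestep} applies since Cohen--Macaulayness of $R/N_{y,I}$ forces $N_{y,I}$ to have no embedded primes, producing the graded $R/N_{y,I}$-module isomorphism $\phi \colon [C_{y,I}/N_{y,I}](-1) \xrightarrow{\sim} I/N_{y,I}$, while Lemma \ref{lem:nonpureheight} gives $\dim(R/I) = d-1$ where $d:=\dim(R/N_{y,I})$. I would then apply the local cohomology functors $H^i_m(-)$ to the two short exact sequences
\begin{equation*}
0 \to I/N_{y,I} \to R/N_{y,I} \to R/I \to 0 \quad \text{and} \quad 0 \to C_{y,I}/N_{y,I} \to R/N_{y,I} \to R/C_{y,I} \to 0.
\end{equation*}
The vanishing $H^i_m(R/N_{y,I}) = 0$ for $i \neq d$, combined with the abstract $R$-module isomorphism $I/N_{y,I} \cong C_{y,I}/N_{y,I}$ induced by $\phi$, yields $H^i_m(R/I) \cong H^i_m(R/C_{y,I})$ for every $i < d-1$. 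By Schenzel's characterization of sequential Cohen--Macaulayness (each $H^i_m(M)$ must be zero or have a specific depth/dimension profile), the sCM conditions for $R/I$ and $R/C_{y,I}$ therefore agree at all indices below $d-1$.

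The main obstacle will be the top index $i = d-1$, where $H^{d-1}_m(R/I)$ and $H^{d-1}_m(R/C_{y,I})$ both appear as kernels of maps from the common module $H^d_m(I/N_{y,I}) \cong H^d_m(C_{y,I}/N_{y,I})$ into $H^d_m(R/N_{y,I})$, but these maps are induced by the distinct inclusions $\iota_I$ and $\iota_C$, which differ by the isomorphism $\phi$ (corresponding to ``multiplication by $f/g$'' in the notation of Theorem \ref{thm:onestep}). To finish, I expect to show that $\phi$ induces a correspondence between the dimension filtrations of $R/I$ and $R/C_{y,I}$, or equivalently between the canonical modules $\omega_{R/I}$ and $\omega_{R/C_{y,I}}$ (viewed as cokernels of the two maps $\omega_{R/N_{y,I}} \to \text{Hom}_{R/N_{y,I}}(C_{y,I}/N_{y,I}, \omega_{R/N_{y,I}})$), so that the Cohen--Macaulayness of the top local cohomology modules transfers between the two sides, completing the equivalence.
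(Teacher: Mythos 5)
Your proposal follows essentially the same skeleton as the paper's argument in the nondegenerate case: establish the isomorphism $I/N \cong C/N$ via Lemma \ref{lem:nonpureonestep} (noting, correctly, that Cohen--Macaulayness of $R/N$ gives unmixedness of $N$), apply $H^\bullet_m(-)$ to the two short exact sequences $0 \to I/N \to S \to R/I \to 0$ and $0 \to C/N \to S \to R/C \to 0$, and use $H^i_m(S)=0$ for $i<d$ to match $H^i_m(R/I)$ and $H^i_m(R/C)$ for $i<d-1$. Up to that point you are in agreement with the paper, which uses the Herzog--Sbarra criterion \cite[Theorem 1.4]{HS02} in place of your invocation of Schenzel.

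However, the proposal as written has a genuine gap exactly where you flag the ``main obstacle.'' You correctly identify that at the top index $i = d-1$ the two modules $H^{d-1}_m(R/I)$ and $H^{d-1}_m(R/C)$ arise as kernels of maps from the (abstractly isomorphic) modules $H^d_m(I/N) \cong H^d_m(C/N)$ into $H^d_m(S)$, and that these maps are induced by different inclusions into $S$, so the abstract isomorphism alone does not immediately identify the kernels. But you then write ``I expect to show that $\phi$ induces a correspondence between the dimension filtrations\ldots'' and stop. This is not an argument; it is a statement of hope, and the paper does not take that route.

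The paper's resolution is both more direct and more elementary than what you anticipate. Dualize the two short exact sequences $0 \to H^{d-1}_m(R/I) \to H^d_m(I/N) \to H^d_m(S) \to 0$ (the surjectivity on the right follows because $\dim(R/I) = d-1$) and its analogue for $C$, using graded local duality over $S$, to obtain
\begin{equation*}
0 \to \omega_S \to H^d_m(I/N)^\vee \to H^{d-1}_m(R/I)^\vee \to 0
\qquad\text{and}\qquad
0 \to \omega_S \to H^d_m(C/N)^\vee \to H^{d-1}_m(R/C)^\vee \to 0.
\end{equation*}
The crucial point is that the \emph{submodule} in both cases is the same fixed Cohen--Macaulay module $\omega_S$ of dimension $d$, regardless of how it sits inside $H^d_m(I/N)^\vee \cong H^d_m(C/N)^\vee$. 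Since $H^i_m(\omega_S)=0$ for $i<d$, the long exact sequence in local cohomology kills the dependence on the embedding and yields $H^i_m(H^{d-1}_m(R/I)^\vee) \cong H^i_m(H^d_m(I/N)^\vee) \cong H^i_m(H^d_m(C/N)^\vee) \cong H^i_m(H^{d-1}_m(R/C)^\vee)$ for all $i<d-1$, which is exactly the condition needed to conclude via Herzog--Sbarra. This sidesteps any need to compare dimension filtrations or canonical modules of $R/I$ and $R/C$ directly.

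A secondary remark: your splitting into degenerate and nondegenerate cases is not present in the paper (which defers to Lemma \ref{lem:nonpureonestep}, itself a nondegenerate statement). Your treatment of $C_{y,I} = \langle 1\rangle$ is reasonable, but your treatment of $\sqrt{C_{y,I}} = \sqrt{N_{y,I}}$ is only a gesture at a Mayer--Vietoris argument and would need to be worked out; in any case it is a minor point compared to the missing top-index step in the nondegenerate case.
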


\begin{proof}
Let $N = N_{y,I}$ and $C = C_{y,I}$, both of which are homogeneous because $I$ is.  Because the graded $R$-submodules of $R/I$ (respectively, $R/C$) are the same as the graded $R/N$-submodules of $R/I$ (respectively, $R/C$), it suffices to show that $R/I$ is sequentially Cohen--Macaulay as an $R/N$-module if and only if $R/C$ is.  Let $S$ denote $R/N$ and $m$ the homogeneous maximal ideal of $S$.  Set $d = \dim(S)$.  Let $\omega_S$ be the canonical module of $S$ and $M^\vee$ the $S$-Matlis dual of a finitely generated graded $S$-module $M$.  By \cite[Theorem 1.4]{HS02}, it suffices to show that $H^i_m(R/I)^\vee = 0$ or $H^i_m(R/I)^\vee$ is Cohen--Macaulay of dimension $i$ for all $0 \leq i \leq \dim(R/I)$ if and only if $H^i_m(R/C)^\vee = 0$ or $H^i_m(R/C)^\vee$ is Cohen--Macaulay of dimension $i$ for all $0 \leq i \leq \dim(R/C)$.

We consider the long exact sequences of local cohomology corresponding to the short exact sequences \[
0 \rightarrow I/N \rightarrow S \rightarrow R/I \rightarrow 0
\] and  \[
0 \rightarrow C/N \rightarrow S \rightarrow R/C \rightarrow 0.
\] Now $H^i_m(S) = 0$ for all $i<d$ because $S$ is Cohen--Macaulay.  According to Lemma \ref{lem:nonpureonestep}, there is an isomorphism $I/N \cong C/N$.  Hence, \[
H^{i-1}_m(R/I) \cong H^i_m(I/N) \cong H^i_m(C/N) \cong H^{i-1}_m(R/C)
\] for all $i<d$.  

Hence, $H^i_m(R/C)^\vee$ and $H^i_m(R/I)^\vee$ are zero or nonzero alike and Cohen--Macaulay of dimension $i$ or not Cohen--Macaulay of dimension $i$ alike for all $0 \leq i \leq d-2$.  By Lemma \ref{lem:nonpureheight}, $\dim(R/C) \leq \dim(R/I) = d-1$.  For all $i>\dim(R/C)$, we know $H^i_m(R/C) = 0$.  Hence, it only remains to show that $H^{d-1}_m(R/I)^\vee$ is either $0$ or Cohen--Macaulay of dimension $d-1$ if and only if $H^{d-1}_m(R/C)^\vee$ is either $0$ or Cohen--Macaulay of dimension $d-1$.  Because $H^{d-1}_m(R/I)^\vee$ is a Noetherian $R/I$-module and $H^{d-1}_m(R/C)^\vee$ a Noetherian $R/C$-module, both have dimension at most $d-1$, and so it is enough to show that $H^i_m(H^{d-1}_m(R/I)^\vee) = 0$ for all $i<d-1$ if and only if $H^i_m(H^{d-1}_m(R/C)^\vee) = 0$ for all $i<d-1$.  We consider the short exact sequences
 \[
0 \rightarrow H^{d-1}_m(R/I) \rightarrow H^d_m(I/N) \rightarrow H^d_m(S) \rightarrow 0
\] and \[
0 \rightarrow H^{d-1}_m(R/C) \rightarrow H^d_m(C/N) \rightarrow H^d_m(S) \rightarrow 0.
\]  By graded local duality over $S$ (see \cite[Theorem 3.6.19]{BH93}), we have \[
0 \rightarrow \omega_S \rightarrow H^d_m(I/N)^\vee \rightarrow H^{d-1}_m(R/I)^\vee \rightarrow 0
\] and \[
0 \rightarrow \omega_S \rightarrow H^d_m(C/N)^\vee \rightarrow H^{d-1}_m(R/C)^\vee \rightarrow 0.
\]  Recalling that $\omega_S$ is a Cohen--Macaulay module of dimension $d$, we have $H^i_m(\omega_S) = 0$ for all $i \neq d$, and so \[
H^i_m(H^{d-1}_m(R/I)^\vee) \cong H^i_m(H^d_m(I/N)^\vee) \cong H^i_m(H^d_m(C/N)^\vee) \cong H^i_m(H^{d-1}_m(R/C)^\vee)
\] for all $i < d-1$. Therefore, $H^i_m(H^{d-1}_m(R/I)^\vee) = 0$ for all $i<d-1$ if and only if $H^i_m(H^{d-1}_m(R/C)^\vee) = 0$ for all $i <d-1$, as desired.  
\end{proof}

Recalling that Cohen--Macaulay is equivalent to sequentially Cohen--Macaulay and unmixed, it is not hard to see that Theorem \ref{thm:SCM} recovers the Cohen--Macaulayness implied by Corollary \ref{weakGVDimpliesgliggi} when all ideals appearing in  all the vertex decompositions throughout the induction are unmixed.

\begin{question}
Using Definition \ref{def:nonpureGVD} and its appropriate extension to an alternate definition of geometrically vertex decomposable, is every homogeneous geometrically vertex decomposable ideal sequentially Cohen--Macaulay?  Can we weaken the hypothesis in Theorem \ref{thm:SCM} that $R/N_{y,I}$ be Cohen--Macaulay to the hypothesis that it be merely sequentially Cohen--Macaulay?
\end{question}

\bibliographystyle{plain}
\bibliography{bibliography}

\end{document}